\pgfplotsset{compat=1.14}
\newtheorem{theorem}{Theorem}
\newtheorem{definition}{Definition}
\newtheorem{lemma}{Lemma}
\definecolor{mygreen}{HTML}{43a047}
\newcommand{\longweak}{\relbar\joinrel\rightharpoonup}
\newcommand{\eps}{\varepsilon}
\newcommand{\dd}{\textup{d}}
\renewcommand{\div}{\text{div}}
\DeclareMathOperator*{\esssup}{ess\,sup}
\numberwithin{equation}{section}
\pgfplotsset{colormap={rainbow}{[1pt]
		rgb(0pt)    =(0,0,1),
		rgb(250pt) =(0,1,1),
		rgb(500pt) =(0,1,0),
		rgb(750pt) =(1,1,0),
		rgb(1000pt)=(1,0,0),
	},
}
\pgfplotsset{colormap={blue}{[1pt]
		rgb(0pt)    =(1,1,1),
		rgb(50pt) =(0.75,0.9,1),
		rgb(120pt) =(0.5, 0.8, 1),
		rgb(180pt) =(0.25, 0.7, 1),
		rgb(250pt) =(0, 0.6, 1),
		rgb(300pt) =(0, 0.5, 0.9),
		rgb(370pt) =(0, 0.5, 0.85),
		rgb(430pt) =(0, 0.3, 0.75),
		rgb(500pt) =(0,0.2,0.7),
		rgb(560pt) =(0,0.1,0.6),
		rgb(620pt) =(0,0,0.5),
		rgb(680pt) =(0,0,0.4),
		rgb(810pt) =(0,0,0.25),
		rgb(870pt) =(0,0,0.15),
		rgb(930pt) =(0,0,0.1),
		rgb(1000pt)=(0,0,0),
	},
}
\pgfplotsset{colormap={red}{[1pt]
		rgb(0pt)    =(0,0,0),
		rgb(60pt) =(0.2,0,0),
		rgb(120pt) =(0.4, 0, 0),
		rgb(190pt) =(0.55, 0, 0),
		rgb(250pt) =(0.7, 0, 0),
		rgb(310pt) =(0.85, 0, 0),
		rgb(380pt) =(0.9, 0, 0),
		rgb(440pt) =(1, 0.2, 0),
		rgb(1000pt)=(1,1,1),
	},
}
\title[Phase-field Models of Tumor Growth and Invasion due to ECM Degradation]{Local and Nonlocal Phase-field Models of Tumor Growth and Invasion due to ECM Degradation}
\author[M. Fritz, E. A. B. F. Lima, V. Nikoli\'c, J. T. Oden, and B. Wohlmuth]{Marvin Fritz$^{1,*}$, Ernesto A. B. F. Lima$^2$, Vanja Nikoli\'c$^1$,\\ J. Tinsley Oden$^2$, and Barbara Wohlmuth$^1$}
\subjclass{35K35, 35A01, 35D30, 35Q92, 65M60.}
\keywords{tumor growth, ECM degradation, nonlocal adhesion, existence of solutions, energy method, finite elements}
\thanks{${}^*$Corresponding author}
\email{\{marvin.fritz, vanja.nikolic, barbara.wohlmuth\}@ma.tum.de}
\email{\{lima, oden\}@oden.utexas.edu}
\begin{document} 
\maketitle
\vspace*{-2mm}
{\footnotesize
   \centerline{$^1$Department of Mathematics, Technical University of Munich, Germany \\[1mm]}
   \centerline{$^2$Oden Institute for Computational Engineering and Sciences, The University of Texas at Austin, USA}
}

\vspace{8mm}
\begin{abstract}
We present and analyze new multi-species phase-field mathematical models of tumor growth and ECM invasion. The local and nonlocal mathematical models describe the evolution of volume fractions of tumor cells, viable cells (proliferative and hypoxic cells), necrotic cells, and the evolution of MDE and ECM, together with chemotaxis, haptotaxis, apoptosis, nutrient distribution, and cell-to-matrix adhesion. We provide a rigorous proof of the existence of solutions of the coupled system with gradient-based and adhesion-based haptotaxis effects. In addition, we discuss finite element discretizations of the model, and we present the results of numerical experiments designed to show the relative importance and roles of various effects, including cell mobility, proliferation, necrosis, hypoxia, and nutrient concentration on the generation of MDEs and the degradation of the ECM.
\end{abstract}
\vspace{10mm}
\section{Introduction} \label{Sec_Introduction}
An important factor in tumor growth and invasion of healthy tissue in humans, and a first step toward metastasis, is the over expression by tumor cells of matrix degenerative enzymes (MDEs) that erode the extracellular matrix (ECM) and allow the migration of tumor cells into the tissue. The expression of MDEs such as urokinase-plasminogen activator and matrix metalloproteinases lead to the activation of plasminogen and the degrading protein plasmin (see, e.g. \cite{chaplain2005mathematical, nargis2016effects, madsen2015source}). According to \cite{madsen2015source}, "matrix degradation is central to tumor pathogenesis", and the degradation of ECM "makes room for migration as cells cannot move into regions of the tissue which are too dense"~\cite{nargis2016effects}. 

This study complements and extends recent work on general phase-field models reported in \cite{fritz2018unsteady, lima2017selection,lima2015analysis}. The models developed and analyzed there are intended to depict phenomena at the mesoscale and macroscale where tumor constituents are determined by fields representing volume fractions of mass concentrations of various species. Local versions of multiphase models have been proposed by several authors over the last decade, and we mention as examples the papers of Araujo and McElwain~\cite{araujo2004history}, Garcke et al.~\cite{garcke2018multiphase,garcke2016cahn}, Wise et al. \cite{wise2008three}, and Lima et al.~\cite{lima2014hybrid}. Recent literature on models of tumor growth is surveyed in, for example,~\cite{bellomo2008foundations, bellomo2000modelling, deisboeck2010multiscale, oden2016toward}. Among studies of phenomenological models of tumor cell invasion and tumor-host interaction, we mention \cite{gatenby1995models,perumpani1996biological,anderson2000mathematical,perumpanani2000traveling,marchant2001travelling,chaplain2005mathematical,madsen2015source,nargis2016effects,hillen2013convergence,peng2017multiscale}. Typically, in these works the models are characterized by systems of reaction-diffusion partial differential equations describing the evolution of concentrations of densities of tumor cells, ECM, and some form of matrix-degradation agent, such as MDE.

Among other factors influencing tumor cell mobility and migration are long-range interactions due to such phenomena as cell-to-cell adhesion. Cell-to-cell adhesion involves the binding of one or more cells to each other through the reaction of proteins on the cell surfaces and is a key factor in tissue formation, stability, and the breakdown of tissue. This adhesion related deterioration of tissue is a factor contributing to the invasion and metastasis of cancer cells (see e.g. \cite{armstrong2006continuum,chaplain2011mathematical,chaplain2005mathematical}). Several nonlocal mathematical models of adhesion (meaning models in which events or cell concentrations at a point $x$ in the tumor domain depend on events at points distinct from $x$ but within a finite neighborhood of $x$) have been proposed in the literature. For an example of such cell-to-cell adhesion models, see Armstrong et al.~\cite{armstrong2006continuum}, Chaplain et al.~\cite{chaplain2005mathematical,anderson2000mathematical}, Engwer et al.~\cite{engwer2017structured}, and Stinner et al.~\cite{stinner2014global}, the latter two references addressing the effects of adhesion on tumor-cell invasion.

The inclusion of such nonlocal effects in mesoscale models of tumor growth leads to convolution terms in the Ginzburg--Landau free energy functional of the tumor and gives rise to models involving systems of nonlinear integro-differential equations. An analysis of a class of such models is discussed in a recent study \cite{fritz2018unsteady}.

In the current work, we introduce new nonlocal, multi-species, phase-field mathematical models of tumor growth and invasion due to ECM degradation. The models depict the evolution of volume fractions of tumor cells, viable cells (proliferative and hypoxic cells), necrotic cells and the evolution of MDE and ECM, together with chemotaxis, haptotaxis, apoptosis, and nutrient distribution.\\
\indent We then provide a rigorous analysis of existence of solutions of the full model system. To the authors' best knowledge, there has been no prior analytical treatment of a phase-field tumor system with ECM degradation.  In~\cite{engwer2017structured,stinner2014global,chaplain2011mathematical}, diffusion-type tumor models with invasion due to ECM degradation are analyzed. Phase-field tumor systems without ECM degradation are treated in Garcke et al.~\cite{garcke2017well,garcke2018cahn}. We combine these two aspects in one tumor growth model. The main challenge in the analysis is to control the ECM density without having a maximum principle for the phase-field tumor equations, as can be done for diffusion-type tumor models; see, e.g.,~\cite{stinner2014global}.\\
\indent In this work, we also discuss efficient finite element discretizations of the model. We present the results of numerical experiments designed to show the relative importance and roles of various effects, including cell mobility, proliferation, necrosis, hypoxia, and nutrient concentration on the generation of MDEs and the degradation of the ECM.

Following this introduction, we describe two families of haptotaxis effects in tumor models in Section \ref{Sec_Modelling}, and include discussions of the role and interpretation of key terms in mass balance laws and the models of MDE production, and the evolution of ECM. After the mathematical notation is introduced in Section \ref{Section_Notation}, a complete mathematical analysis of a local and nonlocal model is presented in Section \ref{Sec_Analysis}. Finite element approximations and time-marching schemes are presented in Section \ref{Sec_FEM} and results of numerical experiments are collected in Section \ref{Sec_Simulations}. Concluding comments are provided in Section \ref{Sec_Conclusion}.

\section{Models of Tumor Growth and ECM Degradation} \label{Sec_Modelling}
We begin with a generalization of the setting described in~\cite{oden2016toward} in which a tumor mass, contained in a region $\Omega \subset \mathbb{R}^d$, $d\in \{2,3\}$, at time $t \in [0, T]$, is viewed as a mixture of constituents of constant and equal mass density $\varrho_0$ characterized by volume fractions $\phi_{\beta}: \overline{\Omega} \times [0, T] \rightarrow \mathbb{R}$, $\beta \in \{T, P, H, N\}$. The volume fraction of tumor cells $\phi_T$ is made of proliferative cells ($\phi_P$), which have a high probability of migration or growing in density (e.g. through mitosis and cell-to-cell and cell-to-matrix adhesion) in $\Omega$, hypoxic cells ($\phi_H$) that are in a harsh environment with low nutrient availability, and necrotic cells ($\phi_N$) that are cells that died due to the lack of nutrients. The total tumor cell volume fraction is then the sum $\phi_T=\phi_P+\phi_H+\phi_N$. We assume that the tumor growth is logistic, with a proliferation rate $\lambda^{\textup{pro}}_T$, and thus the viable cells ($\phi_V=\phi_P+\phi_H=\phi_T-\phi_N$) can proliferate until the capacity of the domain is reached (i.e., $\phi_T=1$). The tumor volume fraction can decrease due to two phenomena: (1) natural cell death (apoptosis) of viable cells at a rate $\lambda_T^{\textup{apo}}$; (2) degradation of necrotic cells at a rate $\lambda_N^{\textup{deg}}$. \\
\indent The tumor is supplied with nutrients, $\phi_\sigma$, such as oxygen or glucose by the vascular system that nourishes both healthy and tumor cells and which dictates the process of chemotaxis whereby cells migrate in the direction of increasing gradient of the nutrient. Here we characterize the nutrient concentration over $\overline{\Omega} \times [0, T]$ by a scalar field $\phi_{\sigma}=\phi_\sigma(x,t)$ governed by a reaction-diffusion equation. \\
\indent The tumor is embedded in a network of macromolecules called the extracellular matrix (ECM), the density of which is represented by a scalar-valued field $\theta=\theta(x,t)$. The ECM is non-diffusible~\cite{nargis2016effects} and its evolution can be modeled by a logistic-type evolution equation which captures the degradation of ECM due to the action of certain matrix-degenerative enzymes (MDEs). When the local nutrient supply (indicated by $\phi_{\sigma}$) drops below a certain threshold, tumor cells may enter a state of hypoxia in which enzymes are released by hypoxic cells that make room for cell migration by eroding the ECM. This process is called haptotaxis. The concentration of matrix degenerative enzymes is characterized here by a field $\phi_M=\phi_M(x,t)$. \\
\indent The mechanical behavior of the tumor mass must obey the balance laws of mechanics, namely the laws of conservation of mass, momentum, and energy. We will ignore thermal effects, and also, for the moment, mechanical deformations, see e.g. \cite{lima2017selection}, as well as convective flow velocities in the material time derivatives, see e.g. \cite{fritz2018unsteady}, concentrating on mass conservation.  

Under these assumptions, tumor mass ($\textup{m}_T= \int_{\Omega} \varrho_0 \phi_T\, \textup{d}x$) is conserved ($\textup{d} \textup{m}_T / \textup{d}t=\Gamma$, $\Gamma$ being the mass supplied to by other constituents). This leads to the evolution equation,
\begin{equation} \label{1}
\begin{aligned}
\partial_t \phi_T= \text{div}J-\textup{div} J_\alpha+\lambda^{\textup{pro}}_T \phi_\sigma \phi_V(1-\phi_T) -\lambda_T^{\textup{apo}}\phi_V -\lambda_N^{\textup{deg}}\phi_N. 
\end{aligned}
\end{equation}
Here, $J$ is the mass flux, $\lambda^{\textup{pro}}_T$ and $\lambda_T^{\textup{apo}}$ are non-negative parameters governing the rate of growth and decline of tumor cell volume due to cell proliferation and apoptosis, respectively, $\lambda_N^{\textup{deg}}$ is the rate in reduction of $\phi_N$ due to the natural removal of necrotic cells and $J_\alpha$ is the adhesion flux (cf. \cite{armstrong2006continuum}) representing the influx of tumor mass due to cell-to-matrix effects, such as haptotaxis and cell--ECM adhesion. We refer to both $J$ and $J_\alpha$ as "mass" fluxes recognizing that they are actually characterized by volume fractions of constituents rather than mass concentrations because the constituent mass densities are assumed to be equal and constant and thus do not appear in the mass balance law. 

According to well-established thermodynamics arguments, the mass flux is of the form,
\begin{equation}
\begin{aligned}
J=m_T(\phi_V)\nabla \mu,
\end{aligned}
\end{equation}
where $m_T$ is the cell mobility matrix (such as $m_T(\phi_V)=M_T \phi_V^2 (1-\phi_V)^2$, $M_T>0$) and $\mu$ is the chemical potential,
\begin{equation}\label{mu}
\begin{aligned} 
\mu= \frac{\delta \mathcal{E}}{\delta \phi_T}=\Psi'(\phi_T)-\varepsilon^2_T \Delta \phi_T-\chi_C \phi_\sigma+\delta_T \phi_T,
\end{aligned}
\end{equation}
$\mathcal{E}$  being the Ginzburg--Landau free energy functional,
\begin{equation}\label{Ginzburg}
\begin{aligned} 
\mathcal{E}(\phi_T,\phi_\sigma)=\int_\Omega \left( \Psi(\phi_T)+\frac{\varepsilon^2_T}{2}|\nabla \phi_T|^2-\chi_C \phi_\sigma \phi_T+\frac{1}{2\delta_\sigma} \phi_\sigma^2 + \frac{\delta_T}{2} \phi_T^2 \right)   \dd x.
\end{aligned}
\end{equation}
$\delta \mathcal{E}/ \delta \phi_T$ denotes the variational or Gateaux derivative of $\mathcal{E}$ with respect to $\phi_T$. In (\ref{Ginzburg}), $\delta_T$ is a positive parameter relating to the level of cell diffusion, $\Psi$ is a double-well potential (such as $\Psi(\phi_T)=\overline{E}\phi_T^2(1-\phi_T)^2$, $\overline{E}>0$), $\varepsilon_T$ is a parameter characterizing surface energy of domains separated by large gradients in $\phi_T$, and $\chi_C$ is the chemotaxis parameter. If $\mu$ is simply $\delta_T \phi_T$, then $\div(J)$ in (\ref{1}) collapses to a classical diffusion term $J=\div(m_T(\phi_V) \delta_T \nabla \phi_T)$. The potential $\Psi$ penalizes the energy (increases it to move the system away from a minimum energy point) when $\phi_T \notin [0,1]$. The presence of the Laplacian in (\ref{mu}) leads to a fourth-order evolution equation of the Cahn--Hilliard type when $\mu$ is introduced into (\ref{1}). The resulting model is a diffused-interface or phase-field model in which the boundary between "phases" ($\phi_T,\phi_V,\phi_N,\dots$) is an implicit part of the solution.

The adhesion flux $J_\alpha$ in \eqref{1} represents either a \emph{local} gradient-based (cf. \cite{stinner2014global,tao2011chemotaxis,walker2007global}) or a \emph{nonlocal} adhesion-based haptotaxis effect; cf.~\cite{armstrong2006continuum,chaplain2011mathematical,gerisch2008mathematical}. Therefore, we consider the cases $\alpha \in \{\text{loc}, \text{nonloc}\}$ with respective fluxes of the form
\begin{equation} \label{adhesion_flux}
J_\alpha = \chi_H \phi_V \cdot \begin{cases} \nabla \theta, &\alpha = \text{loc}, \\ k*\theta, &\alpha=\text{nonloc},\end{cases}
\end{equation}
where $\chi_H$ is the so-called haptotaxis parameter, $k$ is a vector-valued kernel function and $*$ denotes the convolution operator, which is set to zero outside of the domain $\Omega$. We will specify assumptions on $k$ needed in the analysis later.

The rate-of-change of the volume fraction of necrotic cells, $\phi_N$, is assumed to be non-diffusive and increases when the nutrient drops below a threshold $\sigma_{VN}$. Also, some of the necrotic cells are removed from the tumor domain and leave as waste products. We propose to capture these phenomena by the evolution equation,
\begin{equation}\partial_t \phi_N = \lambda_{VN} \mathscr{H}(\sigma_{VN}-\phi_\sigma) \phi_V - \lambda_N^{\textup{deg}} \phi_N,\label{N} \end{equation}
where $\lambda_{VN}$ is a non-negative parameters and $\mathscr{H}$ is the Heaviside step function.

To the mass balance \eqref{1}, we add the equations governing the evolution of the nutrient, the MDE, and the ECM,
\begin{align}
\partial_t \phi_\sigma &=    \div\big(D_\sigma(\phi_\sigma) (\delta_\sigma^{-1} \nabla \phi_\sigma - \chi_C \nabla \phi_T)\big)-\lambda_T^{\textup{pro}} \phi_V \frac{\phi_\sigma}{\phi_\sigma + \lambda_\sigma^{\textup{sat}}}, \label{sigma}  \\
\partial_t \phi_M &= \div(D_M(\phi_M) \nabla \phi_M) - \lambda^{\textup{dec}}_M \phi_M + \lambda^{\textup{pro}}_M \phi_V \theta\frac{\sigma_H}{\sigma_H+\phi_\sigma}  (1-\phi_M) - \lambda^{\textup{dec}}_\theta \theta \phi_M,  \label{MDE} \\
\partial_t \theta &= -\lambda_\theta^{\textup{deg}} \theta \phi_M, \label{ECM}
\end{align}
In (\ref{sigma})--(\ref{ECM}), we assume that the nutrient volume fraction decreases as it is consumed by viable tumor cells. The production of MDE by the viable cells is proportional to the nutrient and ECM concentrations at a rate $\lambda^{\textup{pro}}_M$. We assume that the production is higher at low-nutrient \cite{nargis2016effects} and high ECM concentration environments. The MDE concentration decreases due to a natural decay, $\lambda^{\textup{dec}}_M$, and the decay of the ECM, $\lambda_\theta^{\textup{dec}}$. The quantities $\lambda^{\textup{pro}}_M$, $\lambda^{\textup{dec}}_M$,  $\lambda_\theta^{\textup{dec}}$, and $\lambda_\theta^{\textup{deg}}$ are non-negative parameters governing the rate of growth or decay of the MDE and ECM, as indicated. 

% To these equations, we must add initial conditions and boundary conditions. The unknown dependent variables governed by the model are $$\{\phi_T,\mu,\phi_N,\phi_\sigma,\phi_M,\theta\}.$$
% At $t=0$, we present initial distributions  $\{\phi_{T,0},\phi_{V,0},\dots,\theta_0\}$, which are smooth functions of position $x$ in $\Omega$. We prescribe homogeneous Neumann conditions on the boundary $\partial \Omega$ of $\Omega$. %, e.g.
%$$\nabla \phi_T \cdot n = 0, \dots, \nabla \theta \cdot n = 0,$$
%$n$ being a unit outward normal to $\partial \Omega$.

\section{Notation and auxiliary results} \label{Section_Notation}
For notational simplicity, we omit the spatial domain $\Omega$ when denoting various Banach spaces and write only $L^p, H^m, W^{m,p}$, where $1\leq p\leq \infty$ and $1\leq m<\infty$. These spaces are equipped with the norms $|\cdot|_{L^p}$, $|\cdot|_{H^m}$, and $|\cdot|_{W^{m,p}}$. We denote by $(\cdot,\cdot)$ the scalar product in $L^2$. The brackets $\langle \cdot,\cdot \rangle$ stand for the duality pairing on $(H^1)'\times H^1$. In the case of $d$-dimensional vector functions, we  write $[L^p]^d$, $[H^m]^d$ and $[W^{m,p}]^d$.\\
\indent For a given Banach space $X$, we define the Bochner space
$$L^p(0,T;X)=\{ u :(0,T) \to X: u \text{ Bochner measurable, } \int_0^T |u(t)|_X^p \,\text{d$t$} < \infty \},$$
where $1 \leq p < \infty$, with the norm $$\|u\|_{L^p X} = \|u\|_{L^p(0,T;X)} =\left( \int_0^T |u(t)|_X^p \,\dd t \right)^{1/p};$$ see~\cite{evans2010partial,roubicek}. For $p=\infty$, we equip $L^\infty(0,T;X)$ with the norm $$\|u\|_{L^\infty X}=\|u\|_{L^\infty(0,T;X)} = \esssup_{t\in (0,T)} |u(t)|_X$$ and we introduce the Sobolev--Bochner space as
$$W^{1,p}(0,T;X)=\{ u \in L^p(0,T;X) : \partial_t u \in L^p(0,T;X) \}.$$
\indent Throughout this paper, $C<\infty$ stands for a generic positive constant.
\subsection{Helpful inequalities} We recall the Poincar\'e inequality,
\begin{alignat}{2}
|f-\overline{f}|_{L^2} &\leq C_\text{P} |\nabla f|_{L^2} &&\quad\text{for all } f \in H^1, \label{Poincare_1}
\end{alignat}
where $C_\text{P}<\infty$ and $\overline{f}=\frac{1}{|\Omega|}\int_\Omega f(x) \,\dd x$ is the mean of $f$; cf.~\cite{roubicek}. We also recall Young's inequality for convolutions, 
\begin{align} \label{Young_convolution}
    |f *g|_{L^r} \leq |f|_{L^p} |g|_{L^q}, \quad p,q,r \geq 1, \quad 1+\tfrac{1}{r}=\tfrac{1}{p}+\tfrac{1}{q},
\end{align}
where $f \in L^p$, $g \in L^q$; see~\cite[Theorem 4.2]{lieb2001analysis}. Gronwall's inequality will be often employed as well.
\begin{lemma}[Gronwall, cf. Lemma 3.1 in~\cite{garcke2017well}] Let $u,v \in C([0,T];\mathbb{R}_{\geq 0})$. If there are positive constants $C_1,C_2<\infty$ such that
$$u(t)+v(t) \leq C_1 + C_2 \int_0^t u(s) \, \textup{d}s \quad \text{ for all } t\in [0,T],$$
then it holds that $$u(t) + v(t) \leq C_1 e^{C_2 T} \quad \text{for all }\ t \in [0,T].$$
\label{Lem_Gronwall}
\end{lemma}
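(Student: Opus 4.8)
The plan is to first discard the nonnegative term $v$ on the left-hand side, reducing the hypothesis to a standard scalar integral inequality for $u$ alone, then solve that by the classical integrating-factor argument, and finally feed the resulting bound back into the original inequality to recover the estimate for $u+v$.

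First, since $v(t)\geq 0$ for all $t$, the hypothesis immediately gives $u(t)\leq C_1+C_2\int_0^t u(s)\,\dd s$ for all $t\in[0,T]$. Set $w(t):=C_1+C_2\int_0^t u(s)\,\dd s$. Because $u\in C([0,T];\mathbb{R}_{\geq 0})$, the fundamental theorem of calculus gives $w\in C^1([0,T])$ with $w(0)=C_1$ and $w'(t)=C_2\,u(t)\leq C_2\,w(t)$, where the last inequality uses $u(t)\leq w(t)$. Multiplying by the integrating factor $e^{-C_2 t}$ yields $\frac{\dd}{\dd t}\bigl(w(t)\,e^{-C_2 t}\bigr)=\bigl(w'(t)-C_2\,w(t)\bigr)e^{-C_2 t}\leq 0$, so $t\mapsto w(t)\,e^{-C_2 t}$ is non-increasing; hence $w(t)\,e^{-C_2 t}\leq w(0)=C_1$, i.e. $w(t)\leq C_1 e^{C_2 t}\leq C_1 e^{C_2 T}$ for all $t\in[0,T]$. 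Returning to the hypothesis and using $u(t)+v(t)\leq w(t)$, we conclude $u(t)+v(t)\leq C_1 e^{C_2 T}$, as claimed.

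There is no genuine obstacle here — the statement is elementary. The only point worth flagging is that one should not attempt to apply an integrating factor to $u+v$ directly, since $v$ need be neither differentiable nor present under the integral; the clean route is to decouple $u$ using $v\geq 0$, estimate $u$, and only then recombine. An alternative to the integrating-factor step is Picard iteration of the integral inequality for $u$: substituting the bound into itself $n$ times produces $u(t)\leq C_1\sum_{k=0}^{n}\frac{(C_2 t)^k}{k!}$ plus an $(n{+}1)$-fold iterated integral of $u$ over a simplex, whose size is at most $\bigl(\max_{[0,T]}u\bigr)\frac{(C_2 T)^{n+1}}{(n+1)!}\to 0$ as $n\to\infty$ since $u$ is bounded on the compact interval $[0,T]$; letting $n\to\infty$ gives $u(t)\leq C_1 e^{C_2 t}$ and the same conclusion follows.
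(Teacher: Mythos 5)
Your proof is correct and is the standard Gr\"onwall argument; the paper itself does not prove this lemma but simply cites Lemma 3.1 of Garcke--Lam (where essentially the same integrating-factor argument applies), so there is nothing to compare against beyond confirming that your reasoning is sound. The key structural point you rightly emphasize --- drop the nonnegative $v$ to obtain a closed inequality for $u$ alone, solve that, and then reinstate $v$ at the end via $u(t)+v(t)\leq w(t)$ --- is exactly what makes the lemma with the extra additive $v$ a trivial corollary of the classical scalar Gr\"onwall inequality, and your remark about why one cannot run an integrating factor directly on $u+v$ is accurate.
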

\subsection{Embedding results} Let $X$, $Y$, $Z$ be Banach spaces such that $X$ is compactly embedded in $Y$ and $Y$ is continuously embedded in $Z$, i.e. $X\hookrightarrow \hookrightarrow Y \hookrightarrow Z$. In the proof of the existence theorem below we will rely on the Aubin--Lions compactness lemma, see~\cite[Corollary 4]{simon1986compact},
\begin{equation}\begin{alignedat}{2} L^p(0,T;X) \cap W^{1,1}(0,T;Z) &\hookrightarrow \hookrightarrow L^p(0,T;Y), &&\quad 1\leq p<\infty, \\
L^\infty(0,T;X) \cap W^{1,r}(0,T;Z) &\hookrightarrow \hookrightarrow C([0,T];Y), &&\quad r >1.
\label{Eq_AubinLions}
\end{alignedat}
\end{equation}
Furthermore, we make use of the following continuous embeddings
\begin{alignat}{2} L^2(0,T;Y) &\cap H^1(0,T;Z) &&\hookrightarrow C([0,T];[Y,Z]_{1/2}), \label{Eq_ContEmbedding1}\\
L^\infty(0,T;Y) &\cap C_w([0,T];Z) &&\hookrightarrow C_w([0,T];Y),\label{Eq_ContEmbedding2}
\end{alignat}
where $[Y,Z]_{1/2}$ denotes the interpolation space between $Y$ and $Z$; cf.~\cite[Theorem 3.1, Chapter 1]{lions2012non} and~\cite[Theorem 2.1]{strauss1966continuity}. We refer to~\cite[Definition 2.1, Chapter 1]{lions2012non} for the definition of the interpolation space. In (\ref{Eq_ContEmbedding2}), $C_w([0,T];Y)$ denotes the space of weakly continuous functions on the interval $[0,T]$ with values in $Y$. 
\subsection{General assumptions}
We make the following assumptions on the domain and parameters throughout the paper.\\
\begin{itemize}
\item[\textbf{(A1)}] $\Omega \subset \mathbb{R}^d$, where $d\in \{2,3\}$, is a bounded domain with Lipschitz boundary and $T>0$ is a fixed time horizon.\\
\item[\textbf{(A2)}] The mobility $m_T \in C_b(\mathbb{R}^2)$ satisfies $$(\exists \, m_0,\, m_\infty >0)\, (\forall x \in \mathbb{R}^2) : \quad  m_0 \leq m_T(x) \leq m_\infty .$$
\item[\textbf{(A3)}] The functions $D_\sigma, D_M \in C_b(\mathbb{R})$ satisfy
$$(\exists \, D_0, D_\infty >0)\, (\forall x \in \mathbb{R}) : \quad D_0 \leq D_\sigma(x) \leq D_\infty, \quad D_0 \leq D_M(x) \leq D_\infty.$$
\item[\textbf{(A4)}] The constants $\eps_T, \delta_\sigma, \lambda^{\textup{pro}}_M$ are positive and fixed, while $\chi_C$, $\delta_T$, $\sigma_{VN}$, $\lambda_T^{\textup{apo}}$, $\lambda_N^{\textup{deg}}$ are non-negative fixed constants. \\
\item[\textbf{(A5)}] The potential $\Psi \in C^{1,1}(\mathbb{R})$ is non-negative, continuously differentiable, with globally Lipschitz derivative, and satisfies
\begin{equation*}
    \begin{aligned}
(\exists  R_1, R_2, R_3>0) \ (\forall    x \in \mathbb{R}): \quad \Psi(x) \geq R_1|x|^2-R_2,\quad
 |\Psi'(x)|\leq R_3(1+|x|).
    \end{aligned}
\end{equation*}
\item[\textbf{(A6)}] The adhesion flux $J_\alpha$, where $\alpha \in \{\text{loc},\text{nonloc}\}$, is of the form $$J_\alpha(\phi_T,\phi_N,\theta)=g(\phi_T,\phi_N) G(\theta)$$ with $g \in C_b(\mathbb{R}^2)$ and $G \in \mathscr{L}(X_\alpha;[L^2]^d)$. The space $X_{\alpha}$ is defined as
\begin{equation} \label{X_alpha}
    \begin{aligned}
        X_{\alpha}=\begin{cases}
        H^1\cap L^\infty, \quad &\alpha=\textup{loc}, \\
        L^2, \quad &\alpha=\textup{nonloc}.
        \end{cases}
    \end{aligned}
\end{equation}
\end{itemize}

The assumptions (A1)--(A5) are typical in tumor growth models; see, e.g.,~\cite{garcke2018cahn,garcke2017well,garcke2016global,garcke2016cahn,fritz2018unsteady}.
Assumption (A6) is satisfied if we modify the adhesion flux in (\ref{adhesion_flux}) by replacing $\phi_V$ with the bounded cut-off functional $\mathcal{C}(\phi_V)=\max(0,\min(1,\phi_V))$. This approach is also common in tumor modelling; cf.~\cite{garcke2018cahn,fritz2018unsteady}. We define $g(\phi_T,\phi_N)=\mathcal{C}(\phi_T-\phi_N)$ and $$G(\theta)=\begin{cases} \nabla \theta, &\alpha=\text{loc}, \\ k*\theta, &\alpha=\text{nonloc},\end{cases}$$ for a kernel function $k\in L^1(\mathbb{R}^d)$, which gives the following estimate on the adhesion flux,
$$\begin{aligned} |J_{\text{loc}}|_{L^2} &\leq \chi_H |\nabla \theta|_{L^2} \leq \chi_H |\theta|_{H^1 \cap L^\infty},
\\ |J_{\text{nonloc}}|_{L^2} &\leq \chi_H |k*\theta|_{L^2} \leq \chi_H |k|_{L^1} |\theta|_{L^2},
\end{aligned}$$
where we applied Young's inequality for convolutions (\ref{Young_convolution}) in the case $\alpha=\text{nonloc}$. Here, we equip the intersection space $X_\text{loc}=H^1\cap L^\infty$ with the norm $|\cdot|_{H^1\cap L^\infty}:=|\cdot|_{H^1}+|\cdot|_{L^\infty}$.

\subsection{Comparison to other tumor growth models}

In Section~\ref{Sec_Analysis}, we provide a rigorous analysis of existence of solutions to a modification of the system governed by the equations (\ref{1}), (\ref{mu}), (\ref{N})--(\ref{ECM}). In our model, we combine the effects of tumor growth and invasion, ECM degradation and the separation of tumor phases into viable and necrotic cells. 

The basis of phase-field tumor models, i.e., a Cahn--Hilliard equation for the tumor volume fraction $\phi_T$ and a reaction-diffusion equation for the nutrient concentration $\phi_\sigma$, has been proposed in~\cite{hawkins2012numerical} and has been extended to general multiphase models in~\cite{garcke2018multiphase}. The existence analysis for this model is provided by Garcke et al. \cite{garcke2017well,colli2015cahn} and additionally, several flow models for the velocity field of the mixture have been proposed and analyzed, e.g., flow models by Darcy \cite{garcke2016global,garcke2016cahn,garcke2018cahn,jiang2015well,dai2017analysis}, Brinkman \cite{ebenbeck2019analysis,ebenbeck2019cahn}, Darcy--Forchheimer--Brinkman \cite{fritz2018unsteady} and Navier--Stokes \cite{lam2017thermodynamically}.

To account for cell-to-matrix and cell-to-cell adhesion effects, nonlocal models have been proposed, see e.g. \cite{chaplain2011mathematical,frigeri2017diffuse}. For the analysis of cell-to-cell adhesion models, we refer to \cite{della2016nonlocal,della2018nonlocal,fritz2018unsteady}. To account for cell-to-matrix adhesion, one has to introduce the extracellular matrix, and up to the authors' knowledge, there has been no coupling of the ECM density to a phase-field type tumor growth model. In \cite{stinner2014global,engwer2017structured,chaplain2011mathematical}, diffusion-type tumor models with ECM degradation have been considered and analyzed. 

Our model combines both the phase-field type and the effect of ECM degradation into one system. The main challenge in the analysis of our system is to control the ECM density without having a maximum principle for the phase-field tumor equations, as can be done for diffusion-type tumor models, see \cite{stinner2014global}.

%\end{assumption}
\section{Analysis of the local and nonlocal model} \label{Sec_Analysis}
We consider the system given by equations (\ref{1}), (\ref{mu}), (\ref{N})--(\ref{ECM}) and modify it to perform the analysis. Since the equation for the ECM density \eqref{ECM} is given by an operator-valued ordinary differential equation, its solution can be expressed via the integral
 \begin{align} \label{ECM_eq}
 \theta(x,t)=\theta(x,0) \exp\left\{-\int_0^t \phi_M(x,s)\, \dd s\right\} \text{ a.e. in } \Omega \times (0,T).
 \end{align}
We will employ equation \eqref{ECM_eq} going forward. Next we eliminate the viable cell volume fraction $\phi_V$ from the system by expressing it in terms of $\phi_T$ and $\phi_N$, i.e. $\phi_V=\phi_T-\phi_N$, which yields the system
\begin{equation} \begin{aligned}
\partial_t \phi_T &=\begin{multlined}[t]  \div (m_T(\phi_T, \phi_N) \nabla \mu) -\div(J_\alpha(\phi_T,\phi_N,\theta))  + \phi_\sigma  f_1(\phi_T,\phi_N)-\lambda^{\textup{apo}}_T\phi_T  -\lambda_N^{\textup{dec}} \phi_N,\end{multlined} \\
\mu &= \Psi'(\phi_T)-\eps^2_T \Delta \phi_T-\chi_C \phi_\sigma+\delta_T \phi_T, \\
\partial_t \phi_N &= \mathscr{S}(\sigma_{VN}-\phi_\sigma) f_2(\phi_T,\phi_N) - \lambda_N^{\textup{deg}} \phi_N, \\
\partial_t \phi_\sigma &=    \div\big(D_\sigma(\phi_\sigma)(\delta_\sigma^{-1} \nabla \phi_\sigma-\chi_C \nabla \phi_T)\big)  + (\phi_T-\phi_N) f_3(\phi_\sigma) , \\
\partial_t \phi_M &=\div(D_M(\phi_M) \nabla \phi_M) +  \theta f_4(\phi_T,\phi_N,\phi_\sigma,\phi_M) - \lambda_M^{\textup{pro}} \phi_M  , \\
\theta(x,t)&=\theta(x,0) \exp\left\{-\int_0^t f_5(\phi_M(x,s))\, \dd s\right\}, %\partial_t \theta &=- \theta f_5(\phi_M),
\end{aligned} \label{mod_problem_loc} \end{equation}
where $\lambda_N^{\textup{dec}}:=\lambda_N^{\textup{deg}}-\lambda_T^{\textup{apo}}$. Note that we have additionally modified the equation for $\phi_N$ by introducing the Sigmoid function $\mathscr{S}$ as a continuous approximation of the Heaviside step function $\mathscr{H}$. This modification is necessary to derive $H^1$-estimates in space of the necrotic tumor volume fraction $\phi_N$. Furthermore, we have generalized the right-hand side terms in \eqref{1}, \eqref{mu}, \eqref{N}--\eqref{MDE}, \eqref{ECM_eq}  by introducing functions $f_i$, $i \in \{1, \dots, 5\}$, on which we make the following assumptions: \\[3mm]
% \noindent \textbf{($\text{A7}_{\text{loc}}$)} The functions $f_1 \in C(\mathbb{R}^2)$, $f_2 \in \text{Lip}\cap PC^1(\mathbb{R}^2)$, $f_3 \in C(\mathbb{R}^2)$, $f_4 \in C ( \mathbb{R}^4)$, and $f_5 \in \text{Lip}(\mathbb{R};\mathbb{R}_{\geq 0})$ satisfy 
% $$(\exists f_\infty, \bar{f}_\infty>0)\,(\forall x): \quad |f_i(x)| \leq f_\infty, \ \forall i \in \{1,\dots,5\},\quad  |D_x f_j(x)|< \bar{f}_\infty, \ \forall j \in \{2,5\}, $$ 
% in case of the local model ($\alpha=\text{loc}$), or \\

\noindent \textbf{($\text{A7}_{\text{nonloc}}$)} The functions $f_1 \in C_b(\mathbb{R}^2)$, $f_2 \in \text{Lip}(\mathbb{R}^2)\cap PC^1(\mathbb{R}^2)$, $f_3 \in C_b(\mathbb{R})$, $f_4 \in C_b(\mathbb{R}^4)$, and $f_5 \in C_b(\mathbb{R};\mathbb{R}_{\geq 0})$ satisfy 
$$(\exists f_\infty, \bar{f}_\infty>0)\,(\forall x): \quad |f_i(x)| \leq f_\infty, \ \forall i \in \{1,\dots,5\},\quad |D_x f_2(x)|\leq \bar{f}_\infty \text{ a.e.}, $$ 
in the case of the nonlocal model ($\alpha=\text{nonloc}$), or\\

\noindent \textbf{($\text{A7}_{\text{loc}}$)} Let ($\text{A7}_{\text{nonloc}}$) hold. Additionally, let $f_5 \in \text{Lip}(\mathbb{R};\mathbb{R}_{\geq 0})$ such that  $|D_x f_5(x)|\leq \bar{f}_\infty$ a.e., \\

\noindent in case of the local model ($\alpha=\text{loc}$). 

Here, $PC^1$ denotes the space of piecewise continuously differentiable functions, which ensure together with Lipschitz continuity the validity of the chain rule in the situation of a composition with a vector-valued Sobolev function; see \cite{murat2003chain,leoni2007necessary}. We note that the assumption on $f_5$ is strengthened in the local case from continuity to Lipschitz continuity. Since Lipschitz continuous functions are almost everywhere differentiable, the expression $D_x f_5$ is well-defined a.e. for $f_5 \in \text{Lip}(\mathbb{R};\mathbb{R}_{\geq 0})$.

In Section \ref{Sec_FEM}, we give specific and practically relevant examples of functions $f_1$, \dots, $f_5$, which satisfy the assumptions given in ($\text{A7}_\alpha$) and relate the system (\ref{mod_problem_loc}) to the model given by (\ref{1}), (\ref{mu}), (\ref{N})--(\ref{ECM}).

% Since the equation for the ECM density $\theta$ is given by a operator-valued ordinary differential equation, its solution can be expressed via the integral
% $$\theta(x,t)=\theta(x,0) \exp\left\{-\int_0^t f_5(\phi_M(x,s))\, \dd s\right\} \text{ a.e. in } \Omega \times [0,T].$$

We couple the system of equations \eqref{mod_problem_loc} to the initial data and homogeneous Neumann boundary conditions
\begin{align} \label{Initial_bnd_data_loc}
\begin{cases}
\partial_n \phi_T = \partial_n \phi_\sigma = \partial_n \phi_M = m_T(\phi_T, \phi_N) \partial_n \mu-J_\alpha(\phi_T, \phi_N, \theta) \cdot n =  0 \quad \text{ on } \partial \Omega \times (0,T), \\
(\phi_T, \phi_N, \phi_\sigma, \phi_M) \vert_{t=0}= (\phi_{T,0}, \phi_{N,0}, \phi_{\sigma,0}, \phi_{M,0}).
\end{cases}
\end{align}

We next define the notion of a weak solution of our system. 
\begin{definition}[Weak solution] \label{Definition_loc} Let $\alpha \in \{\textup{loc},\textup{nonloc}\}$ and $\theta_0 \in X_\alpha$, with $X_{\alpha}$ defined as in \eqref{X_alpha}. We call $(\phi_T, \mu, \phi_N, \phi_\sigma, \phi_M, \theta)$ a weak solution of the initial-boundary value problem \eqref{mod_problem_loc}, \eqref{Initial_bnd_data_loc} if 
\begin{equation*}
    \begin{aligned}
&    \phi_T \in L^\infty(0,T;H^1) \cap H^1(0,T; (H^1)'),\quad \mu \in L^2(0,T; H^1), \quad \phi_N \in L^\infty(0,T;H^1)\cap H^1(0,T;L^2), \\
    & \phi_\sigma, \phi_M \in L^2(0,T; H^1) \cap L^\infty(0,T; L^2) \cap H^1(0,T; (H^1)'), \\
    & \theta \in \begin{cases} W^{1, \infty}(0,T; L^\infty) \cap H^1(0,T; H^1)\ & \text{for } \ \alpha=\textup{loc}, \\ W^{1,\infty}(0,T;L^2)\ & \text{for } \ \alpha=\textup{nonloc}, \end{cases}
    \end{aligned}
\end{equation*}
and it holds that
\begin{subequations} \label{Loc:SystemContinuous}
\begin{align}
&\begin{multlined}[t] \langle \partial_t \phi_T, \varphi_1 \rangle + (m_T(\phi_T, \phi_N) \nabla \mu , \nabla  \varphi_1) -(J_\alpha(\phi_T,\phi_N,\theta),\nabla \varphi_1)\\[2mm] - (\phi_\sigma f_1(\phi_T,\phi_N), \varphi_1 ) + (\lambda_T^{\textup{apo}} \phi_T +\lambda_N^{\textup{dec}} \phi_N,\varphi_1)=0,  \end{multlined} \label{cont_loc:1}\\[2mm]
& -(\mu , \varphi_2 )+(\Psi'(\phi_T),  \varphi_2)  +\eps_T^2 (\nabla \phi_T, \nabla  \varphi_2) - \chi_C (\phi_\sigma,  \varphi_2) +\delta_T (\phi_T,\varphi_2)=0 , \label{cont_loc:2}\\[2mm]
& ( \partial_t \phi_N, \varphi_3 ) - (\mathscr{S}(\sigma_{VN}-\phi_\sigma) f_2(\phi_T,\phi_N), \varphi_3)+\lambda_N^{\textup{deg}} (\phi_N,\varphi_3)=0, \label{cont_loc:3} \\[2mm]
& \langle \partial_t \phi_\sigma,  \varphi_4  \rangle +( D_\sigma(\phi_\sigma) \nabla \varphi_4, \delta_\sigma^{-1} \nabla \phi_{\sigma} - \chi_C \nabla \phi_T)  + ((\phi_T-\phi_N) f_3(\phi_\sigma) , \varphi_4 )=0, \label{cont_loc:4} \\[2mm]
&  \langle \partial_t \phi_M,  \varphi_5  \rangle + (D_M(\phi_M) \nabla \phi_M , \nabla  \varphi_5) - (\theta f_4(\phi_T, \phi_N, \phi_M,\phi_\sigma), \varphi_5) +\lambda_M^{\textup{pro}} (\phi_M, \varphi_5 ) =0, \label{cont_loc:5}
\end{align}
\end{subequations}
a.e. in time, for all test functions $ \varphi_1, \varphi_2, \varphi_4, \varphi_5 \in H^1$, $\varphi_3 \in L^2$, and 
\begin{equation}  \theta(x,t)=\theta_0(x) \exp\left\{-\int_0^t f_5(\phi_M(x,s))\, \dd s\right\} \text{ a.e. in } \Omega \times (0,T) \label{cont_loc:6}, \tag{4.4f} 
\end{equation}
 where
$$(\phi_T,\ \phi_N,\ \phi_\sigma,\ \phi_M) \vert_{t=0}= (\phi_{T,0}, \ \phi_{N,0},\ \phi_{\sigma,0}, \ \phi_{M,0}).$$
\end{definition}

\subsection{Existence of solutions} \label{Section:AnalysisLocal}
Our first goal is to prove existence of solutions for the local and nonlocal model. 
\begin{theorem}[Existence of weak solutions] \label{theorem:ExistenceLocal} Let $\alpha \in \{\textup{loc},\textup{nonloc}\}$ and $\theta_0 \in X_\alpha$, with $X_{\alpha}$ defined as in \eqref{X_alpha}. Furthermore, let assumptions \textup{(A1)}--\textup{(A6)}, $(\textup{A7}_\alpha)$ hold and let the initial data have the following regularity
$$ \phi_{T,0} \in H^1,\ \phi_{N,0} \in H^1,\ \phi_{\sigma,0} \in L^2,\ \phi_{M,0} \in L^2.$$
Then there exists a solution $(\phi_T, \mu, \phi_N, \phi_\sigma, \phi_M, \theta)$ of the problem \eqref{mod_problem_loc} in the sense of Definition~\ref{Definition_loc}. Additionally, the following energy estimate holds
\begin{equation*}\begin{aligned} 
&\begin{multlined}  \|\phi_T\|_{L^\infty H^1}^2+ \| \mu\|_{L^2 H^1}^2 + \|\phi_N\|^2_{L^\infty H^1} + \|\phi_\sigma\|_{L^\infty L^2}^2 + \|\phi_\sigma\|_{L^2H^1}^2 \\ +\|\phi_M\|^2_{L^\infty L^2} + \|\phi_M\|_{L^2H^1}^2 +\|\theta\|_{L^\infty X_\alpha}^2 \leq C(T) \, (1+\textup{IC}), \end{multlined}
%\\&\leq C(T) \, (1+\textup{IC}),
\end{aligned}
\end{equation*}
where
$$\textup{IC}=|\phi_{T,0}|^2_{H^1}+ |\phi_{N,0}|^2_{H^1} +|\phi_{\sigma,0}|^2_{L^2}  + |\phi_{M,0}|^2_{L^2} + |\theta_0|^2_{X_\alpha}.
$$

\end{theorem}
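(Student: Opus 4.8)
The plan is to construct solutions by a Faedo--Galerkin scheme in the three diffusive unknowns $\phi_T,\phi_\sigma,\phi_M$ (together with $\mu$), while treating $\phi_N$ and $\theta$ as pointwise-in-space objects determined by the other variables, then to derive $n$-uniform energy estimates, and finally to pass to the limit by compactness.

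\textbf{Step 1: Galerkin approximation.} Let $\{w_j\}$ be the $L^2$-orthonormal, $H^1$-orthogonal eigenbasis of the Neumann Laplacian and $V_n=\mathrm{span}\{w_1,\dots,w_n\}$. Seek $\phi_T^n,\phi_\sigma^n,\phi_M^n,\mu^n\in C^1([0,T_n);V_n)$ solving the $V_n$-projections of \eqref{cont_loc:1}--\eqref{cont_loc:2}, \eqref{cont_loc:4}--\eqref{cont_loc:5}, with $\phi_N^n$ obtained from \eqref{cont_loc:3} as a linear-in-time ODE solved pointwise in $x$ (well posed since $f_2\in\mathrm{Lip}$, $\mathscr S$ bounded) and $\theta^n$ from \eqref{cont_loc:6}; since $f_5\geq0$ is bounded, $0\le\theta^n(x,t)\le\theta_0(x)$ pointwise. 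Substituting these representations into the $\phi_M^n$- and $\phi_T^n$-equations turns the coefficient equations into a Volterra integro-differential system whose right-hand sides are continuous and bounded, so a Cauchy--Peano/fixed-point argument gives a local solution, extended to $[0,T]$ by the bounds below.

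\textbf{Step 2: Energy estimates.} Test the $\phi_T^n$-equation with $\mu^n$ and the $\mu^n$-equation with $\partial_t\phi_T^n$ and add: the $\Psi'$ and Laplacian terms combine into $\frac{\dd}{\dd t}\int_\Omega\bigl(\Psi(\phi_T^n)+\tfrac{\eps_T^2}{2}|\nabla\phi_T^n|^2+\tfrac{\delta_T}{2}|\phi_T^n|^2\bigr)$, while the mobility lower bound (A2) yields $m_0|\nabla\mu^n|_{L^2}^2$ on the left. The adhesion term is controlled by $|(J_\alpha,\nabla\mu^n)|\le|g|_{L^\infty}|G(\theta^n)|_{L^2}|\nabla\mu^n|_{L^2}\le\tfrac{m_0}{2}|\nabla\mu^n|_{L^2}^2+C|\theta^n|_{X_\alpha}^2$ via (A6), the $|\nabla\mu^n|^2$-part being absorbed; the remaining reaction and chemotaxis terms are handled by Young's inequality and the growth bound (A5). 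Testing the $\phi_\sigma^n$- and $\phi_M^n$-equations with $\phi_\sigma^n,\phi_M^n$ gives $D_0|\nabla\phi_\sigma^n|_{L^2}^2$, $D_0|\nabla\phi_M^n|_{L^2}^2$ by (A3), the cross term $-\chi_C D_\sigma\nabla\phi_T^n\cdot\nabla\phi_\sigma^n$ split by Young (its $|\nabla\phi_T^n|_{L^2}^2$ part already controlled), and $f_3,f_4,\theta^n$ bounded. For $\phi_N^n$, testing with $\phi_N^n$ gives the $L^\infty L^2$ bound, and differentiating \eqref{cont_loc:3} in space gives $\partial_t\nabla\phi_N^n=\mathscr S'(\sigma_{VN}-\phi_\sigma^n)\,f_2\,\nabla\phi_\sigma^n+\mathscr S\,D_xf_2(\phi_T^n,\phi_N^n)\!\cdot\!(\nabla\phi_T^n,\nabla\phi_N^n)-\lambda_N^{\mathrm{deg}}\nabla\phi_N^n$, so using that $\mathscr S$ is smooth and bounded, $f_2\in PC^1\cap\mathrm{Lip}$ (validity of the chain rule) with $|D_xf_2|\le\bar f_\infty$, and $\nabla\phi_\sigma^n,\nabla\phi_T^n\in L^2L^2$, Gronwall yields $\nabla\phi_N^n\in L^\infty L^2$. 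For $\theta^n$, \eqref{cont_loc:6} gives $|\theta^n|_{L^\infty L^\infty}\le|\theta_0|_{L^\infty}$; in the local case $\nabla\theta^n=e^{-\int_0^t f_5(\phi_M^n)}\bigl(\nabla\theta_0-\theta_0\int_0^t f_5'(\phi_M^n)\nabla\phi_M^n\,\dd s\bigr)$, whence $|\nabla\theta^n(t)|_{L^2}\le|\nabla\theta_0|_{L^2}+|\theta_0|_{L^\infty}\bar f_\infty\int_0^t|\nabla\phi_M^n|_{L^2}\,\dd s$, bounded in $L^\infty(0,T)$ by the $\phi_M^n\in L^2H^1$ bound (this is exactly where $f_5\in\mathrm{Lip}$ from $(\mathrm A7_{\mathrm{loc}})$ enters), and $\partial_t\theta^n=-f_5(\phi_M^n)\theta^n$ with $\partial_t\nabla\theta^n$ linear in $\nabla\phi_M^n$ give the stated $\theta$-regularity; in the nonlocal case only $\theta^n\in W^{1,\infty}L^2$ is needed. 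The mean of $\mu^n$ is estimated by testing its equation with the constant function and using (A5), so that $\mu^n\in L^2H^1$ via the Poincar\'e inequality \eqref{Poincare_1}. Summing all identities, the couplings ($\theta^n$ into $\phi_T^n$ and $\phi_M^n$; $\phi_M^n$ into $\theta^n$; $\phi_T^n\leftrightarrow\phi_\sigma^n$) close into a single inequality to which Gronwall's Lemma~\ref{Lem_Gronwall} applies, giving all bounds in the asserted energy estimate, uniformly in $n$.

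\textbf{Step 3: Compactness and passage to the limit.} The uniform bounds give (sub)sequential weak-$*$ limits $\phi_T,\phi_N\in L^\infty H^1$, $\mu\in L^2H^1$, $\phi_\sigma,\phi_M\in L^2H^1\cap L^\infty L^2$, and $\theta$ in the stated space; comparison in the equations bounds $\partial_t\phi_T,\partial_t\phi_\sigma,\partial_t\phi_M$ in $L^2((H^1)')$ and $\partial_t\phi_N$ in $L^2L^2$, so the Aubin--Lions lemma \eqref{Eq_AubinLions} yields strong convergence $\phi_T^n,\phi_\sigma^n,\phi_M^n,\phi_N^n\to\phi_T,\phi_\sigma,\phi_M,\phi_N$ in $L^2L^2$ and a.e.\ in $\Omega\times(0,T)$. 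Continuity of $f_5$ plus dominated convergence then give $\theta^n\to\theta$ a.e.\ and, by domination by $|\theta_0|$, in every $L^pL^p$, $p<\infty$. The nonlinear compositions $m_T(\phi_T^n,\phi_N^n)$, $D_\sigma(\phi_\sigma^n)$, $D_M(\phi_M^n)$, $\Psi'(\phi_T^n)$ (globally Lipschitz), $g(\phi_T^n,\phi_N^n)$, $\mathscr S(\sigma_{VN}-\phi_\sigma^n)f_2(\phi_T^n,\phi_N^n)$, $f_3,f_4$ all converge strongly in $L^2L^2$ by a.e.\ convergence, uniform boundedness and dominated convergence; paired against fixed test functions and the weakly convergent gradients $\nabla\mu^n,\nabla\phi_T^n,\nabla\phi_\sigma^n,\nabla\phi_M^n$, respectively $G(\theta^n)$ (weakly convergent in $L^2$ in the local case, strongly convergent in the nonlocal case by Young's inequality \eqref{Young_convolution}), this lets us pass to the limit in each of \eqref{cont_loc:1}--\eqref{cont_loc:5} and in \eqref{cont_loc:6}. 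The embeddings \eqref{Eq_ContEmbedding1}--\eqref{Eq_ContEmbedding2} give $\phi_T\in C_w([0,T];H^1)$, $\phi_N\in C([0,T];L^2)$ and $\phi_\sigma,\phi_M\in C([0,T];L^2)$; since the projected initial data converge to $(\phi_{T,0},\phi_{N,0},\phi_{\sigma,0},\phi_{M,0})$, the initial conditions hold. The final energy estimate follows from weak/weak-$*$ lower semicontinuity of the norms in the Step 2 bounds.

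\textbf{Main obstacle.} As the authors emphasize, there is no maximum principle for the Cahn--Hilliard block, so $\phi_T$ (hence $\phi_V=\phi_T-\phi_N$, hence $\phi_M$, hence $\theta$) is only controlled in $H^1$, not $L^\infty$; the delicate points are therefore (i) propagating the $L^2H^1$-bound on $\phi_M$ into an $H^1$-in-space bound on $\theta$ so that $J_{\mathrm{loc}}=g\nabla\theta$ makes sense in $L^2$ — which forces $f_5$ to be Lipschitz in the local case; (ii) the $H^1$-in-space estimate for $\phi_N$, which is why the Heaviside must be smoothed to $\mathscr S$ and $f_2$ is taken in $PC^1\cap\mathrm{Lip}$; and (iii) arranging the mutual couplings so that a single Gronwall inequality closes the whole system. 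The remaining steps are routine for phase-field tumor models.
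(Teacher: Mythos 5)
Your overall architecture (Galerkin in space, a priori bounds, Aubin--Lions, weak--strong limit passage, dominated convergence for the exponential representation of $\theta$) matches the paper, and your treatment of $\theta^n$, $\phi_M^n$, $\phi_N^n$, the mean of $\mu^n$, and the limit passage is essentially the paper's argument. However, there is a genuine gap in the core energy estimate for the Cahn--Hilliard/nutrient block. You test \eqref{cont_loc:1} with $\mu^n$ and \eqref{cont_loc:2} with $\partial_t\phi_T^n$; after cancellation of $\langle\partial_t\phi_T^n,\mu^n\rangle$ this leaves the chemotaxis coupling term $\chi_C(\phi_\sigma^n,\partial_t\phi_T^n)$ on the right-hand side, and this term cannot be ``handled by Young's inequality'': there is no $n$-uniform bound on $\partial_t\phi_T^n$ in $L^2(0,T;L^2)$ (the limit only has $\partial_t\phi_T\in L^2(0,T;(H^1)')$), and estimating it by duality through the equation reintroduces $|\nabla\mu^n|_{L^2}|\nabla\phi_\sigma^n|_{L^2}$ with a fixed constant, so absorption into $D_0\delta_\sigma^{-1}|\nabla\phi_\sigma^n|^2_{L^2}$ would impose an unassumed smallness condition on the parameters. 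The missing idea is precisely the paper's choice of test functions: test \eqref{cont_loc:1} with $\mu^n+\chi_C\phi_\sigma^n$ (so that the dangerous $\chi_C(\partial_t\phi_T^n,\phi_\sigma^n)$ terms cancel exactly against the $-\chi_C(\phi_\sigma^n,\varphi_2)$ contribution from \eqref{cont_loc:2} tested with $-\partial_t\phi_T^n$) and test the nutrient equation with $K_1\phi_\sigma^n$, choosing $K_1$ large so that $K_1 D_0\delta_\sigma^{-1}-\chi_C^2 m_\infty^2/(2m_0)-\eps>0$; only then do the remaining cross terms close under Gronwall for arbitrary admissible $\chi_C$, $m_0$, $D_0$, $\delta_\sigma$.

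A secondary, fixable weakness is your construction of the semidiscrete system: unlike the paper, you keep $\phi_N^n$ out of $V_n$ and define it by a pointwise-in-$x$ ODE (which, incidentally, is nonlinear in $\phi_N^n$ through $f_2$, not ``linear-in-time''; well-posedness still follows from Lipschitz continuity). Then the reduced system for the Galerkin coefficients is a functional differential system with an infinite-dimensional memory term rather than the finite-dimensional Volterra system covered by the paper's appendix theorem, so the asserted ``Cauchy--Peano/fixed-point argument'' needs to be spelled out (continuity of the solution map $(\phi_T^n,\phi_\sigma^n)\mapsto\phi_N^n$ plus a Schauder argument on the coefficient functions), and before differentiating the $\phi_N^n$-ODE in space you must first establish that $\phi_N^n(\cdot,t)\in H^1$ (e.g.\ by difference quotients and Gronwall), which in the paper is automatic since $\phi_N^n\in V_n$. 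Also note that your pointwise bound $0\le\theta^n\le\theta_0$ presumes $\theta_0\ge 0$, which is not assumed; the correct statement is $|\theta^n|\le|\theta_0|$ since $f_5\ge 0$.
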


\subsection{Galerkin approximations in space} 
To prove existence of solutions, we employ Galerkin approximations in space, following the strategy in~\cite{fritz2018unsteady, garcke2017well, garcke2016cahn}. We construct approximate solutions by considering eigenfunctions $\{w_k\}_{k \in \mathbb{N}}$ of the Neumann-Laplacian:
\begin{equation}
\begin{aligned}
\begin{cases}
-\Delta w_k= \lambda _k w_k &\quad \text{in } \Omega,\\[1mm]
\dfrac{\partial w_k}{\partial n}= 0 &\quad \text{on } \partial \Omega. 
\end{cases}
\end{aligned}
\end{equation}
It is known that the eigenfunctions of the Neumann-Laplacian form an orthonormal basis of $L^2$ and an orthogonal basis of $H^1$; cf.~\cite[Theorem II.6.6]{boyer2012mathematical}. We then define the discrete space by 
\begin{align} \label{Vn}
V_n= \textup{span}\{w_1, \dots, w_n\}.
\end{align}
We seek approximate solutions of the form 
\begin{equation} \label{GalerkinApproximations}
\begin{aligned}
\phi_T^n (x,t) &= \sum_{j=1}^n \alpha_j(t) w_j(x),
&&\quad \phi_N^n (x,t) =  \sum_{j=1}^n \beta_j(t) w_j(x), \\
\quad \phi_{\sigma}^n (x,t) &= \sum_{j=1}^n \gamma_j(t) w_j(x), 
&&\quad \phi_M^n (x,t) =  \sum_{j=1}^n \delta_j(t) w_j(x), 
&& 
\end{aligned}
\end{equation}
where  $\alpha_j, \beta_j, \gamma_j, \delta_j : (0,T) \to \mathbb{R}$ will be determined by a system of ordinary differential equations. We choose the approximations of the initial conditions as follows:
\begin{equation} \label{GalerkinInitial}
\begin{alignedat}{2} \phi_{T, 0}^n &=\, \Pi_{V_n} \phi_{T,0}, \quad \phi_{N, 0}^n&&= \Pi_{V_n} \phi_{N,0}, \\
\phi_{\sigma, 0}^n &=\, \Pi_{V_n} \phi_{\sigma, 0} , \quad \phi_{M,0}^n &&= \Pi_{V_n} \phi_{M,0}.
\end{alignedat}
\end{equation}
Above, $\Pi_{V_n}$ denotes the $L^2$ projection operator: $(\Pi_{V_n} u, v)=(u ,v)$ for all $v \in V_n$. Note that for all $n \in \mathbb{N}$ it holds that
\begin{equation} \label{bounds_approx_initial}
\begin{alignedat}{2}
|\phi_{T, 0}^n|_{H^1} &\leq |\phi_{T, 0}|_{H^1}, \quad &&|\phi_{N, 0}^n|_{H^1} \leq |\phi_{N, 0}|_{H^1}, \\ |\phi_{\sigma, 0}^n|_{L^2} &\leq |\phi_{\sigma, 0}|_{L^2},\quad &&|\phi_{M,0}^n|_{L^2} \leq |\phi_{M,0}|_{L^2};
\end{alignedat}
\end{equation}
see, e.g.,~\cite[Lemma 7.5]{robinson2001infinite}. \\
\indent The semi-discretization of the problem \eqref{mod_problem_loc} is then given by
\begin{subequations} \label{GalerkinLocal:System}
\begin{align}
&\begin{multlined}[t] (\partial_t \phi^n_T, \varphi^n ) + (m_T(\phi^n_T, \phi^n_N) \nabla \mu^n , \nabla  \varphi^n ) -(J_\alpha(\phi^n_T, \phi^n_{N},\theta^n),\nabla \varphi^n )\\[0mm] - (\phi^n_\sigma f_1(\phi_T^n,\phi_N^n), \varphi^n ) + (\lambda_T^{\textup{apo}}\phi^n_T+ \lambda_N^{\textup{dec}}\phi^n_N,  \varphi^n )=0, \end{multlined} \label{loc:1} \\[2mm]
& -(\mu^n , \varphi^n )+(\Psi'(\phi^n_T),  \varphi^n )  +\eps_T^2 (\nabla \phi^n_T, \nabla  \varphi^n ) - \chi_C (\phi^n_\sigma,  \varphi^n ) + \delta_T (\phi_T^n,\varphi^n)=0 , \label{loc:2} \\[2mm]
& (\partial_t \phi^n_N, \varphi^n ) -   (\mathscr{S}(\sigma_{VN}-\phi^n_\sigma) f_2(\phi^n_T,\phi^n_N), \varphi^n ) + \lambda_N^{\textup{deg}} (\phi_N^n,\varphi^n)=0,\label{loc:3} \\[2mm]
& ( \partial_t \phi^n_\sigma,  \varphi^n  ) +( D_\sigma(\phi_\sigma^n) (\delta_\sigma^{-1} \nabla \phi^n_{\sigma} - \chi_C \nabla \phi^n_T), \nabla \varphi^n )  +((\phi^n_T-\phi^n_N) f_3(\phi_\sigma^n) , \varphi^n  )=0, \label{loc:4} \\[2mm]
& \begin{multlined}[t] (\partial_t \phi^n_M,  \varphi^n ) + (D_M(\phi_M^n) \nabla \phi^n_M , \nabla \varphi^n )- (\theta^n f_4(\phi_T^n, \phi_N^n, \phi_\sigma^n, \phi_M^n), \varphi^n )  +\lambda^{\textup{pro}}_M (\phi^n_M, \varphi^n ) =0, \end{multlined} \label{loc:5} \\[2mm]
& \theta^n(x,t)=\theta_0(x) \exp\bigg\{-\int_0^t f_5(\phi_M^n(x,s))\, \dd s\bigg\}, \label{loc:6} %\text{ a.e. in } \Omega \times (0,T_n), \label{loc:6}
\end{align}
\end{subequations}
for all $\varphi^n \in V_n$, with
\begin{equation} \label{GalerkinLocal:InitialCon}
\begin{aligned}
(\phi^n_T,\ \phi^n_N,\ \phi^n_\sigma,\ \phi^n_M) \vert_{t=0}= (\phi^n_{T,0},\ \phi^n_{N,0},\ \phi^n_{\sigma,0},\ \phi^n_{M,0}).
\end{aligned}
\end{equation}
The system \eqref{GalerkinLocal:System}--\eqref{GalerkinLocal:InitialCon} is equivalent to an initial value problem for a system of integro-differential equations for the unknown function $\xi=(\xi_1,\dots,\xi_n)$, where $\xi_i=(\alpha_i, \beta_i, \gamma_i, \delta_i)$, $i \in \{1,\dots n\}$, which can be equivalently written as
\begin{equation*}
\begin{aligned}
        \partial_t \xi_i(t) &= F^i(t,\xi(t),K\xi(t))\\ &=\widehat{F}^i(t,\xi(t)) + \widetilde{F}^i(t,\xi(t),K\xi(t)),
        \end{aligned}
        \end{equation*}
for all $i \in \{1,\dots,n\}$, where $K\xi(t)=\int_0^t f_5(\sum_{j=1}^n \delta_j(s) w_j) \dd s$ and 
\begin{equation*}
\begin{aligned}
       \widetilde{F}^i_1&=\int_\Omega g\big(\phi_T^n(x,t),\phi_N^n(x,t)\big) G\big(\theta_0 \exp\{-K\xi(t)\}\big) \cdot \nabla w_i \, \dd x,  \\
       \widetilde{F}^i_2 &= \widetilde{F}^i_3 =0,
        \\ \widetilde{F}^i_4 &=\int_\Omega \theta_0 \exp\{-K\xi(t)\}  f_4\big(\phi_T^n(x,t),\phi_N^n(x,t),\phi_\sigma^n(x,t),\phi_M^n(x,t)\big) w_i \, \dd x . 
 \end{aligned}
\end{equation*}
We note that the given functions $\Psi'$, $m_T$, $D_M$, $D_\sigma$, $f_1, \dots, f_5$ are all continuous. Therefore, on account of an extension of the Cauchy--Peano theorem for integro-differential equations, see Theorem \ref{app:theorem} in Appendix below, we obtain a solution of \eqref{GalerkinLocal:System}--\eqref{GalerkinLocal:InitialCon} such that
\begin{equation*}
\begin{aligned}
(\phi^n_T,\ \mu^n,\ \phi^n_N,\ \phi^n_\sigma,\ \phi^n_M, \ \theta^n) \in \begin{multlined}[t] C^1([0,T_n]; V_n) \times C([0,T_n]; V_n) \times \left( C^1([0,T_n]; V_n) \right)^3 \times C([0,T_n];X_\alpha),
\end{multlined}
\end{aligned}
\end{equation*}
for sufficiently short time $T_n \leq T$. The upcoming energy estimate will allow us to extend the existence interval to $[0,T]$.
\subsection{Energy estimates}
Our next goal is to derive an energy estimate for solutions of \eqref{loc:1}-\eqref{loc:6}, \eqref{GalerkinLocal:InitialCon} that is uniform with respect to $n$. To this end, we test the equations \eqref{loc:1}-\eqref{loc:5} with different test functions. \\

\noindent \textbf{Estimates for \mathversion{bold}$\theta^n$.}\\[4mm]
\indent Since the integral and the exponential function are continuous and the function $f_5$ is non-negative by assumption (A7$_\alpha$), we conclude
\begin{equation} \label{estimate:theta:initial}
\begin{aligned}
|\theta^n(t)|_{L^p} \leq |\theta_0|_{L^p}, 
\end{aligned}
\end{equation}
for all $t \leq T_n$. Above, $p\in[1,2]$ for $\theta_0 \in X_\text{nonloc}=L^2$ and $p \in [1,\infty]$ for $\theta_0 \in X_\text{loc}=H^1 \cap L^\infty$.\\
\indent In the nonlocal case, this uniform bound of $\theta^n$ is already enough for the upcoming energy estimates. We recall that the term $J_\alpha(\phi_T^n,\phi_N^n,\theta^n)$ in the equation for $\phi_T^n$ can be expressed as $g(\phi_T^n,\phi_N^n) G(\theta^n)$ on account of assumption (A6). Since the operator $G$ requires an argument in $X_\alpha$, we still have to derive an estimate of $\theta^n$ in $H^1$ when $\alpha=\text{loc}.$  \\

\noindent \underline{$\alpha=\text{loc}$}: By the product rule and the chain rule for the composition of a bounded Lipschitz continuous function with a Sobolev function, see \cite[Theorem 2.1.11]{ziemer2012weakly}, we further infer that
\begin{equation}
\begin{aligned}
\nabla \theta^n(t) =   \left(
\nabla \theta_0 - \theta_0 \int_0^t f_5'(\phi^n_M(s)) \nabla \phi^n_M(s)\, \dd s\right) \cdot \exp\left \{-\int_0^t f_5(\phi^n_M(s))\, \dd s \right\}, 
\end{aligned} \label{Eq:thetagradient}
\end{equation} 
for all $t \in [0,T_n]$. From here, using assumption (A7$_{\textup{loc}}$), we obtain the bound for the gradient of the ECM density
\begin{equation*} \label{est_grad_ecm}
\begin{aligned}
|\nabla \theta^n(t) |_{L^2} \leq  |\nabla \theta_0|_{L^2} +  |\theta_0|_{L^\infty} \sqrt{T_n} \, \bar{f}_\infty \,   \|\nabla \phi^n_M\|_{L^2_t L^2},  
\end{aligned}
\end{equation*}
for $t \in [0, T_n]$, where we have used the abbreviation $L^2_t L^2$ for $L^2(0,t; L^2(\Omega))$. By combining this estimate and the estimate \eqref{estimate:theta:initial} with $p=2$, it follows for all $t \in [0,T_n]$ that
\begin{equation} \label{est_H1_ecm}
\begin{aligned} 
|\theta^n(t)|_{H^1}^2 &\leq 2 |\theta_0|_{H^1}^2 + 2 T \bar{f}^2_\infty |\theta_0|_{L^\infty}^2  \| \nabla \phi^n_M\|_{L^2_tL^2}^2.
\end{aligned}
\end{equation}

\noindent \textbf{Estimates for \mathversion{bold}$\phi_M^n$.}\\[4mm]
\indent Testing equation \eqref{loc:5} with $\varphi^n=\phi_M^n(t) \in V_n$ 
and recalling assumption (A3) as well as the bound \eqref{estimate:theta:initial} for $\theta^n$ yields
\begin{equation} \label{estimate:3}
\begin{aligned}
\frac{1}{2} \frac{\text{d}}{\text{d} t} |\phi_M^n|_{L^2}^2 +  D_0 |\nabla \phi_M^n|^2_{L^2} +  \lambda_M^{\textup{pro}} |\phi_M^n|_{L^2}^2 
 \leq \frac{f_\infty}{2} ( |\theta_0|_{L^2}^2 + |\phi_M^n|_{L^2}^2).
\end{aligned} 
\end{equation}
\noindent After integrating over $(0,t)$, where $t \leq T_n$, we conclude by the Gronwall lemma that
\begin{equation} \label{est_phi_MDE}
\begin{aligned} |\phi_M^n(t)|^2_{L^2} + \|\nabla \phi_M^n\|_{L^2_t L^2}^2 + \|\phi_M^n\|_{L^2_t L^2}^2 
\leq C(T_n) \, \big(|\phi^n_{M,0}|_{L^2}^2 + |\theta_0|_{L^2}^2\big).
\end{aligned}
\end{equation}
Adding to this estimate \eqref{estimate:theta:initial} and \eqref{est_H1_ecm} for $\alpha=\text{loc}$, or  \eqref{estimate:theta:initial} for $\alpha=\text{nonloc}$, we get 
\begin{equation}\begin{aligned} |\theta^n(t)|_{X_\alpha}^2 +|\phi_M^n(t)|^2_{L^2} + \|\phi_M^n\|^2_{L^2_t H^1} 
\leq C(T_n) \, \big(|\theta_0|_{X_\alpha}^2 + |\phi_{M,0}|_{L^2}^2\big).
\end{aligned} \label{loc:estimate2}
\end{equation}
Note that above we have also employed the uniform bound for the approximate initial data $\phi_{M,0}^n$ given in \eqref{bounds_approx_initial}.\\

\noindent \textbf{Estimates for \mathversion{bold}$\phi_T^n,\mu^n,\phi_\sigma^n$.}\\[4mm]

Testing equation \eqref{loc:1} with $\mu^n(t)+\chi_C \phi_\sigma^n(t)$, equation \eqref{loc:2} with $-\partial_t \phi_T^n(t)$, and equation \eqref{loc:4} with $K_1 \phi_\sigma^n(t)$, where $K_1>0$, and adding the resulting equations yields
\begin{equation}
\begin{aligned}
& \frac{\text{d}}{\text{d} t} \bigg[ |\Psi(\phi_T^n)|_{L^1} + \frac{\varepsilon_T^2}{2} |\nabla \phi_T^n|_{L^2}^2+\frac{\delta_T}{2} |\phi_T^n|_{L^2}^2  +\frac{K_1}{2} |\phi_\sigma^n|_{L^2}^2 \bigg] 
\\ &+ \left|\sqrt{m_T(\phi_T^n, \phi_N^n)} \, \nabla \mu^n \right|^2_{L^2} + K_1\delta_\sigma^{-1} \left | \sqrt{D_\sigma(\phi_\sigma^n)} \, \nabla \phi_\sigma^n \right|^2_{L^2} \\
 =&\, -\chi_C (m_T(\phi_T^n, \phi_N^n) \nabla \mu^n, \nabla \phi_\sigma^n)+(J_\alpha(\phi_T^n,\phi_N^n,\theta^n), \nabla (\mu^n+\chi_C \phi_\sigma^n)) \\
 &+ (f_1(\phi_T^n,\phi_N^n)\phi_\sigma^n -\lambda_T^{\textup{apo}} \phi_T^n -\lambda_N^{\textup{dec}}  \phi_N^n, \mu^n + \chi_C \phi_\sigma^n) \\
& +K_1\chi_C (D_\sigma(\phi_\sigma^n)\nabla \phi_T^n,\nabla \phi_\sigma^n) - K_1 ((\phi^n_T-\phi^n_N)f_3(\phi_\sigma^n), \phi_\sigma^n)
=:  \textup{RHS}.
\end{aligned} \label{loc:estimate1}
\end{equation}
We can then estimate the right-hand side of \eqref{loc:estimate1} by using assumptions (A2), (A6), (A7$_\alpha$), and Hölder's inequality as follows
\begin{equation} \label{rhs:1}
\begin{aligned}
\text{RHS} \leq& \, \chi_C m_\infty |\nabla \mu^n|_{L^2} |\nabla \phi_\sigma^n|_{L^2} + C |\theta^n|_{X_\alpha} (|\nabla \mu^n|_{L^2} + \chi_C |\nabla \phi_\sigma^n|_{L^2}) \\
& + (f_\infty |\phi_\sigma^n|_{L^2}+\lambda_T^{\textup{apo}} |\phi_T^n|_{L^2}+|\lambda_N^{\textup{dec}}|\cdot  |\phi_N^n|_{L^2})(|\mu^n|_{L^2}+\chi_C |\phi_\sigma^n|_{L^2}) 
\\
& + K_1 \chi_C D_\infty |\nabla \phi_T^n|_{L^2}|\nabla \phi_\sigma^n|_{L^2} + K_1 f_\infty(|\phi^n_T|_{L^2}+|\phi_N^n|_{L^2}) |\phi_\sigma^n|_{L^2}.
\end{aligned}\end{equation}
We note that we need a bound on $|\mu^n|_{L^2}$ to further estimate \eqref{rhs:1}. Testing \eqref{loc:2} with $1 \in H^1$ and taking into account assumption (A7) on the function $\Psi$ results in 
\begin{equation*}
\begin{aligned}
|\mu^n|_{L^1} \leq&\, \int_{\Omega}|\Psi^\prime (\phi^n_T)| \, \textup{d}x + \chi_C |\phi_\sigma^n|_{L^1} +\delta_T |\phi_T^n|_{L^1} \\
\leq& \, R_3\big(|\Omega|+|\phi^n_T|_{L^1}\big) + \chi_C |\phi_\sigma^n|_{L^1} +\delta_T |\phi_T^n|_{L^1} \\
\leq& \, R_3|\Omega|+(R_3+\delta_T)|\Omega|^{1/2}|\phi^n_T|_{L^2} + \chi_C|\Omega|^{1/2} |\phi_\sigma^n|_{L^2};
\end{aligned}
\end{equation*}
we refer also to~\cite{garcke2016global} where a similar argument is employed. By the Poincar\'e inequality \eqref{Poincare_1}, we then conclude 
\begin{equation} \label{estimate:mu}
\begin{aligned}
|\mu^n|_{L^2} &\leq |\mu^n-\overline{\mu^n}|_{L^2}+|\overline{\mu^n}|_{L^2}
\leq C_{\textup{P}}|\nabla \mu^n|_{L^2}+\frac{1}{|\Omega|}|\mu^n|_{L^1} \\
&\leq C_{\text{P}} |\nabla \mu^n|_{L^2} + R_3 + (R_3+\delta_T) |\Omega|^{-1/2} |\phi^n_T|_{L^2}+\chi_C|\Omega|^{-1/2} |\phi_\sigma^n|_{L^2}.
\end{aligned}
\end{equation}
Therefore, by using (\ref{estimate:mu}), we can further estimate the right-hand side of (\ref{loc:estimate1}) as follows:
\begin{equation*}
\begin{aligned}
\text{RHS} \leq& \, \chi_C m_\infty |\nabla \mu^n|_{L^2} |\nabla \phi_\sigma^n|_{L^2} +C |\theta^n|_{X_\alpha} (|\nabla \mu^n|_{L^2} + \chi_C |\nabla \phi_\sigma^n|_{L^2}) \\
& + (f_\infty |\phi_\sigma^n|_{L^2}+\lambda_T^{\textup{apo}}|\phi_T^n|_{L^2}+|\lambda_N^{\textup{dec}}|\cdot|\phi_N^n|_{L^2})\{C_{\text{P}} |\nabla \mu^n|_{L^2} + R_3 \\
&+ (R_3+\delta_T) |\Omega|^{-1/2} |\phi^n_T|_{L^2}+\chi_C|\Omega|^{-1/2} |\phi_\sigma^n|_{L^2}+\chi_C |\phi_\sigma^n|_{L^2}\} 
\\
& + K_1 \chi_C D_\infty |\nabla \phi_T^n|_{L^2}|\nabla \phi_\sigma^n|_{L^2} + K_1 f_\infty(|\phi^n_T|_{L^2}+|\phi_N^n|_{L^2}) |\phi_\sigma^n|_{L^2}.
\end{aligned}\end{equation*}
By employing Young's inequality, we get
\begin{equation*}
\begin{aligned}
\text{RHS} &\leq \left(\frac{m_0}{2}+4\eps\right) |\nabla \mu^n|_{L^2}^2 + \left(\frac{\chi_C^2 m_\infty^2 }{2m_0}+\eps\right) |\nabla \phi_\sigma^n|_{L^2}^2\\ &\quad + C \big( 1 +|\phi_\sigma^n|_{L^2}^2 + |\phi_T^n|_{L^2}^2 + |\nabla \phi_T^n|_{L^2}^2 + |\phi_N^n|_{L^2}^2 + |\theta^n|_{X_\alpha}^2 \big),
\end{aligned}\end{equation*}
 where $\varepsilon>0$. Introducing this estimate of the right-hand side into (\ref{loc:estimate1}) and recalling assumptions (A2) and (A3) yields
\begin{equation}
\begin{aligned}
& \frac{\text{d}}{\text{d} t} \bigg[ |\Psi(\phi_T^n)|_{L^1} + \frac{\varepsilon_T^2}{2} |\nabla \phi_T^n|_{L^2}^2+\frac{\delta_T}{2}|\phi_T^n|_{L^2}^2  +\frac{K_1}{2} |\phi_\sigma^n|_{L^2}^2 \bigg] \\ &\quad + \left( \frac{m_0}{2} - 4\eps \right) |\nabla \mu^n|_{L^2}^2 + \left(K_1 D_0 \delta_\sigma^{-1}-\frac{\chi_C^2 m_\infty^2 }{2m_0}-\eps\right) |\nabla \phi_\sigma^n|^2_{L^2} \\
 &\leq  C \big( 1+|\theta^n|_{X_\alpha}^2 +|\phi_\sigma^n|_{L^2}^2 + |\Psi(\phi_T^n)|_{L^1} + |\nabla \phi_T^n|_{L^2}^2 + |\phi_N^n|_{L^2}^2 \big),
\end{aligned} \label{estimate:1}
\end{equation}
where we have first picked $\varepsilon \in (0, m_0/8)$ and then chosen $K_1$ sufficiently large so that \begin{equation} K_2:=K_1 D_0 \delta_\sigma^{-1}-\frac{\chi_C^2 m_\infty^2 }{2m_0}-\eps>0.
\label{Eq_ConstantK2}
\end{equation}

\noindent \textbf{Estimates for \mathversion{bold}$\phi_N^n$.}\\[4mm]
\indent Testing equation \eqref{loc:3} with $\phi_N^n(t) \in V_n$ yields, after some standard manipulations,
\begin{equation} \label{estimate:2}
\frac12 \frac{\text{d}}{\text{d}t} |\phi_N^n|_{L^2}^2 +\lambda^{\textup{deg}}_N |\phi_N^n|_{L^2}^2 \leq \frac{f_\infty}{2} \big(|\phi_N|_{L^2}^2+|\Omega|\big).
\end{equation}

\noindent This estimate would be enough to absorb the $\phi_N^n$ term on the right hand side of (\ref{estimate:1}). However, we here also derive an estimate of $\phi_N^n$ in the space $L^\infty(0,T;H^1)$, which will enable us to perform the limit process as $n \rightarrow \infty$ later on. Testing (\ref{loc:3}) with $-\Delta \phi_N^n(t) \in V_n$ and performing integration by parts results in
\begin{equation*}
\begin{aligned}
    \frac12 \frac{\dd}{\dd t} |\nabla \phi_N^n|_{L^2}^2 + \lambda_N^{\text{deg}} |\nabla \phi_N^n|_{L^2}^2 = &\,  \lambda_{VN} (\nabla (\mathscr{S}(\sigma_{VN}-\phi_\sigma^n)  f_2(\phi_T^n,\phi_N^n) ), \nabla \phi_N^n) \\
    = &\,  (-\nabla \phi_\sigma^n \mathscr{S}'(\sigma_{VN}-\phi_\sigma) f_2(\phi_T^n,\phi_N^n),\nabla \phi_N^n) \\
     &+ (\mathscr{S}(\sigma_{VN}-\phi_\sigma^n) \nabla \phi_T^n \partial_1 f_2(\phi_T^n,\phi_N^n),\nabla \phi_N^n)\\
      &   + (\mathscr{S}(\sigma_{VN}-\phi_\sigma^n) \nabla \phi_N^n \partial_2 f_2(\phi_T^n,\phi_N^n),\nabla \phi_N^n),
    \end{aligned}
\end{equation*}
where we have applied the chain rule for the composition of a bounded Lipschitz, piecewise continuously differentiable function and a vector-valued Sobolev function; see~\cite{murat2003chain,leoni2007necessary}. After employing the same type of arguments as before, this estimate implies that
\begin{equation} \label{estimate:2b}
\frac{\dd}{\dd t} |\nabla \phi_N^n|_{L^2}^2 +  |\nabla \phi_N^n|_{L^2}^2 \leq  \frac{K_2}{2} |\nabla \phi_\sigma^n|_{L^2}^2 +C(K_2)\cdot \big(|\nabla \phi_N^n|^2_{L^2}+|\nabla \phi_T^n|_{L^2}^2\big),
\end{equation}
where $K_2$ is the positive constant in (\ref{Eq_ConstantK2}).\\

\noindent \textbf{Final energy estimate.} \\[4mm]
\indent Combining the upper bounds (\ref{estimate:mu}), \eqref{estimate:1}, \eqref{estimate:2}, and (\ref{estimate:2b}) yields
\begin{equation}
\begin{aligned}
& \frac{\text{d}}{\text{d} t} \Big[ |\Psi(\phi_T^n)|_{L^1} + |\nabla \phi_T^n|_{L^2}^2 +|\phi_T^n|_{L^2}^2  + |\phi_\sigma^n|_{L^2}^2 + |\phi_N|_{H^1}^2 \Big] +  |\mu^n|_{H^1}^2 +|\nabla \phi_\sigma^n|^2_{L^2} \\
 &\leq  C \big( 1+|\theta^n|_{X_\alpha}^2 +|\phi_\sigma^n|_{L^2}^2 + |\Psi(\phi_T^n)|_{L^1} + |\nabla \phi_T^n|_{L^2}^2 + |\phi_N^n|_{H^1}^2 \big).
\end{aligned} \label{estimate:4}
\end{equation}
After integrating \eqref{estimate:4} over $(0,t)$, where $t \leq T_n$ and taking into account estimate (\ref{loc:estimate2}), we have 
\begin{equation} \label{final_est_local}
\begin{aligned} &|\Psi(\phi_T^n(t))|_{L^1} + |\nabla \phi_T^n(t)|_{L^2}^2 + |\phi_\sigma^n(t)|_{L^2}^2 + |\phi_N^n(t)|^2_{H^1}+ \|\mu^n\|_{L^2_t H^1}^2 + \|\nabla \phi^n_\sigma\|_{L^2_t L^2}^2 
\\ &\leq \text{IC}^n + C(T_n) \cdot \left( 1+ |\phi_\sigma^n|_{L^2}^2 + |\Psi(\phi_T^n)|_{L^1} + |\nabla \phi_T^n|_{L^2}^2 + |\phi_N^n|_{H^1}^2 \right).
\end{aligned}
\end{equation}
Above, we have introduced the following constant that depends on the approximate initial data to simplify the notation
$$\text{IC}^n=|\phi^n_{T,0}|^2_{H^1}+|\Psi(\phi^n_{T,0})|_{L^1} +|\phi^n_{\sigma,0}|^2_{L^2} + |\phi^n_{N,0}|^2_{L^2} + |\phi^n_{M,0}|^2_{L^2} + |\theta_0|^2_{X_\alpha}.
$$

We can employ the fact that
$$|\Psi(\phi^n_{T,0})|_{L^1} \leq C+C|\phi^n_{T,0}|^2_{L^2} \leq C+C|\phi_{T,0}|^2_{L^2}, $$
and thus, $\text{IC}^n$ can be estimated in terms of the initial data as follows
$$\text{IC}^n \leq  \text{IC}=|\phi_{T,0}|^2_{H^1}+C+C|\phi_{T,0}|^2_{L^2} +|\phi_{\sigma,0}|^2_{L^2} + |\phi_{N,0}|^2_{H^1} + |\phi_{M,0}|^2_{L^2} + |\theta_0|^2_{X_\alpha},
$$
where the constant $C$ does not depend on $n$. By adding \eqref{loc:estimate2} to \eqref{final_est_local}, applying Gronwall's inequality to the resulting estimate, and taking the supremum over $(0,T_n)$, we get
\begin{equation}
\begin{aligned} 
& \begin{multlined}[t]\|\Psi(\phi_T^n)\|_{L^\infty_t L^1} + \|\nabla \phi_T^n\|_{L^\infty_t L^2}^2 + \|\phi_\sigma^n\|_{L^\infty_t L^2}^2 + \|\phi_N^n\|^2_{L^\infty_t H^1}+\|\theta^n\|_{L^\infty_t X_\alpha}^2\\[1mm] +\|\phi_M^n\|^2_{L^\infty_t L^2} + \| \phi_M^n\|_{L^2_t H^1}^2+ \|\mu^n\|_{L^2_t H^1}^2 + \|\phi^n_\sigma\|_{L^2_t H^1}^2 \end{multlined}
\\[2mm] &\leq C(T) \, (1+\text{IC}),
\end{aligned}
\label{loc:final}
\end{equation}
for all $t\in [0,T_n]$. The right-hand side of this estimate is independent of $T_n$, which allows to extend the existence interval to $[0,T]$; see also \cite[I.6.VI]{walter1998ordinary}.\\
\indent We remark that from \eqref{loc:final} we can get a uniform bound for $\phi_T^n$ in $L^\infty(0,T;H^1)$ by noting that
\begin{equation} \label{est:phi_T_H1}
    \begin{aligned}
    |\phi_T^n(t)|^2_{L^2} \leq& \,  2C_\text{P}|\nabla \phi_T^n(t)|^2_{L^2}+2\frac{1}{|\Omega|^2}|\phi_T^n(t)|^2_{L^1}\\
    \leq&\, 2C_\text{P}|\nabla \phi_T^n(t)|^2_{L^2}+2\frac{1}{|\Omega|^2}\frac{1}{R_1}\big(|\Psi(\phi^n_T(t))|_{L^1}+R_2\big),
    \end{aligned}
\end{equation}
for all $t \in [0,T]$. Above, we have made use of the Poincar\' e inequality \eqref{Poincare_1} and assumption (A5) on the potential $\Psi$.\\

\noindent \textbf{Additional estimates of the time derivatives of \mathversion{bold}$\theta^n$, $\phi^n_N$, $\phi^n_T$, and $\phi_\sigma^n$.} \\[2mm]

The derived energy estimate (\ref{loc:final}) implies the boundedness of the Galerkin solution \linebreak $(\phi_T^n, \mu^n, \phi_N^n, \phi_\sigma^n, \phi_M^n)$ and of $\theta^n$ in appropriate Banach spaces, which in turn implies the weak and weak-$*$ convergence of subsequences. We consider taking the limit $n \to \infty$ in the Galerkin system (\ref{GalerkinLocal:System}). Since the equations in our system are nonlinear in $\phi_T^n$, $\phi_N^n$, $\phi_\sigma^n$ and $\phi_M^n$, we want to acquire strong convergence of the respective subsequences. We can obtain strong convergence from compact embeddings (\ref{Eq_AubinLions}), which requires the boundedness of the respective time derivative. We derive these estimates in this section. \\
\indent Testing equation \eqref{loc:3} with $\partial_t \phi^n_N(t) \in V_n$ and employing Young's inequality yields
\begin{equation} \label{est:phi_N_t}
    \begin{aligned}
    (1-\eps) \|\partial_t \phi^n_N\|_{L^2L^2}^2 + \frac{\lambda_N^\text{deg}}{2} \|\phi_N\|_{L^\infty L^2}^2 \leq C(T,\eps)+\frac{\lambda_N^\text{deg}}{2} |\phi_{N,0}|_{L^2}^2,
    \end{aligned}
\end{equation}
where $\varepsilon \in (0,1)$. \noindent Furthermore, from equation \eqref{loc:4} we find that for all $\varphi \in L^2(0,T;H^1)$ it holds that
\begin{align*}
&\int_0^T \int_{\Omega} \partial_t \phi_\sigma^n \varphi \, \textup{d}x \textup{d}t\\
\leq& \,  \big(D_\infty \delta^{-1}_{\sigma} \|\nabla \phi_\sigma^n\|_{L^2L^2}+D_\infty \chi_C\|\nabla \phi_T^n\|_{L^2L^2}+f_\infty \big(\|\phi_N^n\|_{L^2L^2}+\|\phi_T^n\|_{L^2L^2}\big)\big)\|\varphi\|_{L^2H^1},
\end{align*}
from which we also get that
\begin{equation} \label{est:phi_sigma_t}
\|\partial_t \phi_\sigma^n\|_{L^2 (H^1)'}\leq C \big(\|\nabla \phi_{\sigma}^n\|_{L^2L^2}+\|\nabla \phi_T^n\|_{L^2L^2}+\|\phi_N^n\|_{L^2L^2}\big),
\end{equation}
where the constant $C>0$ does not depend on $n$. Similarly, from equation \eqref{loc:1} we have
\begin{align*}\int_0^T \int_{\Omega} \partial_t \phi_T^n \, \varphi\, \textup{d}x \textup{d}t \leq&\,\begin{multlined}[t] \big(m_\infty \|\nabla \mu^n\|_{L^2 L^2}+C \|\theta^n\|_{L^2 L^2}+f_\infty \|\phi_\sigma^n\|_{L^2L^2}\\
+\lambda_T^{\textup{apo}}\|\phi_T^n\|_{L^2L^2}+|\lambda_N^{\textup{dec}}|\cdot\|\phi_N^n\|_{L^2L^2}\big)\|\varphi\|_{L^2H^1},\end{multlined}
\end{align*}
and from equation \eqref{loc:5} 
\begin{align*}
\int_0^T \int_{\Omega} \partial_t \phi_M^n \, \varphi \, \textup{d}x \textup{d}t \leq \big(D_\infty \|\nabla \phi_M^n\|_{L^2L^2}+f_\infty \|\theta^n\|_{L^2L^2}+\lambda_M^{\textup{pro}} \|\phi^n_M\|_{L^2L^2}\big)\|\varphi\|_{L^2H^1},
\end{align*}
for all $\varphi \in L^2(0,T;H^1)$. From the above two estimates it follows that
\begin{equation} \label{est:phi_Tsigma_t}
\begin{aligned}
\|\partial_t \phi_T^n\|_{L^2 (H^1)'}\leq&\, C \big( \|\nabla \mu^n\|_{L^2 L^2}\!+\|\nabla \theta^n\|_{L^2L^2}\!+\|\phi_\sigma^n\|_{L^2L^2}\!+\|\phi_T^n\|_{L^2L^2}\!+\|\phi_N^n\|_{L^2L^2}\big), \\
\|\partial_t \phi_M^n\|_{L^2 (H^1)'}\leq&\,C \big( \|\nabla \phi_M^n\|_{L^2 L^2}+\|\theta^n\|_{L^2L^2}+\|\phi_M^n\|_{L^2L^2}\big).
\end{aligned}
\end{equation}

Lastly, we note that from the integral representation of the ECM density $\theta^n$ we can directly derive a uniform bound of $\partial_t \theta^n$ in $L^\infty(0,T;L^2)$ for $\alpha=\text{nonloc}$ and in $L^\infty(0,T;L^\infty)\cap L^2(0,T;H^1)$ for $\alpha=\text{loc}$.

\subsection{Passing to the limit}
On account of the final estimate \eqref{final_est_local} for Galerkin approximations and estimates \eqref{est:phi_T_H1}--\eqref{est:phi_Tsigma_t}, we can conclude that
\begin{equation}\label{loc:boundedweak} \begin{alignedat}{2}
&\{\phi_T^n\}_{n \in \mathbb{N}} &&\text{ is bounded in } L^\infty(0,T;H^1) \cap H^1(0,T;(H^1)'), \\
&\{\mu^n\}_{n \in \mathbb{N}} &&\text{ is bounded in } L^2(0,T;H^1), \\
&\{\phi_N^n\}_{n \in \mathbb{N}} &&\text{ is bounded in } L^\infty(0,T;H^1)\cap H^1(0,T;L^2), \\
&\{\phi_\sigma^n\}_{n \in \mathbb{N}} &&\text{ is bounded in } L^\infty(0,T;L^2) \cap L^2(0,T;H^1) \cap H^1(0,T;(H^1)'),  \\
&\{\phi_M^n\}_{n \in \mathbb{N}} &&\text{ is bounded in } L^\infty(0,T;L^2) \cap L^2(0,T;H^1) \cap H^1(0,T;(H^1)'),  \\
&\{\theta^n\}_{n \in \mathbb{N}} &&\text{ is bounded in } \begin{cases} W^{1 ,\infty}(0,T;L^2), &\alpha=\text{nonloc}, \\  W^{1,\infty}(0,T;L^\infty) \cap H^1(0,T;H^1), &\alpha=\text{loc}, \end{cases}
\end{alignedat}\end{equation}
uniformly with respect to $n$. This implies the existence of weakly/weakly-$*$ converging subsequences, indexed again by $n$, to some limit functions $(\phi_T,\mu,\phi_N,\phi_\sigma,\phi_M,\theta)$ in the respective spaces and the following strong convergences due to the Aubin--Lions Compactness lemma, see (\ref{Eq_AubinLions}),
\begin{equation} \label{loc:strongconv}
\begin{alignedat}{2}
\phi_T^n &\longrightarrow \phi_T &&\text{ strongly in } C([0,T];L^2), \\
\phi_N^n &\longrightarrow \phi_N &&\text{ strongly in } C([0,T];L^2), \\
\phi_\sigma^n &\longrightarrow \phi_\sigma &&\text{ strongly in } L^2(0,T;L^2), \\
\phi_M^n &\longrightarrow \phi_M &&\text{ strongly in } L^2(0,T;L^2), 
\end{alignedat}
\end{equation}
as $n \to \infty$ and the following weak convergence,
\begin{equation} \label{loc:weakconv:theta}
    \theta^n \longweak \theta \text{ weakly in } L^2(0,T;X_\alpha).
\end{equation}

We next show that the limit functions $(\phi_T, \mu, \phi_N, \phi_\sigma, \phi_M, \theta)$ are a solution of the problem \eqref{mod_problem_loc} in the sense of Definition~\ref{Definition_loc}. In particular, for the ECM density $\theta$ we have to prove that it possesses the integral representation given in (\ref{cont_loc:6}).  Due to the strong convergence of $\phi_M^n$ to $\phi_M$ in $L^2(\Omega \times (0,T))$, there is a subsequence, for notational simplicity indexed again by $n$, such that
$$\phi_M^n(x,t) \longrightarrow \phi_M(x,t) \text{ for a.e. } (x,t) \in \Omega \times (0,T),$$
for $n \to \infty$. On account of the exponential function being continuous, the Lebesgue dominated convergence theorem, and $f_5$ being continuous and bounded, we have
$$\theta^n(x,t)=\theta_0(x) \exp\left\{ -\int_0^t f_5(\phi_M^n(x,s)) \, \dd s \right\} \longrightarrow \theta_0(x) \exp\left\{ -\int_0^t f_5(\phi_M(x,s)) \, \dd s \right\} \text{ a.e. }$$
as $n \to \infty$. Applying the Lebesgue dominated convergence theorem again yields
$$\theta^n  \longrightarrow \left( (x,t) \mapsto \theta_0(x) \exp\left\{-\int_0^t f_5(\phi_M(x,s)) \, \dd s\right\}\right) \text{ in } L^2(\Omega\times (0,T)),$$
as $n \to \infty$. Since strong convergence implies weak convergence and weak limits are unique, we have proven that $\theta$, given in (\ref{loc:weakconv:theta}), is of the required form.
%$$\theta(x,t)=\theta_0(x) \exp\left\{ -\int_0^t f_5(\phi_M(x,s)) \, \dd s \right\} \text{ a.e. in } \Omega \times (0,T).$$

For the other solution functions, we multiply the Galerkin system (\ref{GalerkinLocal:System}) by an arbitrary test function $\eta \in C_c^\infty(0,T)$ and integrate from $0$ to $T$, which gives for all $j \in \{1,\dots,n\}$,
\begin{equation} \label{loc:systemlimit}
\begin{aligned}
&\begin{multlined}[t] \int_0^T \big[ \langle \partial_t \phi^n_T, w_j \rangle  +  (m_T(\phi^n_T, \phi^n_N) \nabla \mu^n , \nabla  w_j ) - (J_\alpha(\phi^n_T, \phi^n_{N},\theta^n),\nabla w_j ) \\[-1mm] - (\phi^n_\sigma f_1(\phi_T^n,\phi_N^n), w_j ) + (\lambda_T^{\textup{apo}}\phi^n_T+ \lambda_N^{\textup{dec}}\phi^n_N,  w_j ) \big] \eta(t) \, \dd t=0, \end{multlined} \\[2mm]
& \int_0^T \big[-(\mu^n , w_j )+(\Psi'(\phi^n_T),  w_j )  +\eps_T^2 (\nabla \phi^n_T, \nabla  w_j ) - \chi_C (\phi^n_\sigma,  w_j ) + \delta_T (\phi_T^n,w_j)\big] \eta(t) \, \dd t=0 ,  \\[2mm]
& \int_0^T \big[ ( \partial_t \phi^n_N, \varphi^n) -   (\mathscr{S}(\sigma_{VN}-\phi^n_\sigma)f_2(\phi^n_T,\phi^n_N), w_j ) + \lambda_N^{\textup{deg}} (\phi_N^n,w_j) \rangle \big] \eta(t) \, \dd t=0, \\[2mm]
&\int_0^T \big[\langle \partial_t \phi^n_\sigma,  w_j  \rangle + ( D_\sigma(\phi_\sigma^n) (\delta_\sigma^{-1} \nabla \phi^n_{\sigma} - \chi_C \nabla \phi^n_T), \nabla w_j )  +((\phi^n_T-\phi^n_N) f_3(\phi_\sigma^n) , w_j  )\big] \eta(t) \, \dd t=0,  \\[2mm]
& \begin{multlined}[t] \int_0^T \big[ \langle \partial_t \phi^n_M,  w_j \rangle + (D_M(\phi_M^n) \nabla \phi^n_M , \nabla w_j )- (\theta^n f_4(\phi_T^n, \phi_N^n, \phi_\sigma^n, \phi_M^n), w_j )  \\[-1mm] +\lambda^{\textup{pro}}_M (\phi^n_M, w_j )\big] \eta(t) \, \dd t =0. \end{multlined}
\end{aligned}\end{equation}
We take the limit $n \to \infty$ in each equation. The convergence of the linear terms follows directly from the definition of weak convergence. For instance, the functional
$$\mu^n \mapsto \int_0^T (\mu^n,w_j) \eta(t) \, \dd t \leq \|\mu^n\|_{L^2L^2} |w_j|_{L^2} |\eta|_{L^2(0,T)}$$
is linear and continuous on $L^2(0,T;L^2)$ and therefore, we conclude that
$$\int_0^T (\mu^n,w_j) \eta(t) \, \dd t \longrightarrow \int_0^T (\mu,w_j) \eta(t) \, \dd t,$$
as $n \to \infty$. It remains to treat the nonlinear terms. We note that a similar limit process is performed in~\cite{fritz2018unsteady} for a tumor growth system which also includes a nonlinear mobility, diffusion, and potential function with the same assumptions as in (A2), (A3), and (A5). The same arguments can be applied to our model; we therefore omit the details here. \\
\indent We focus on the treatment of the adhesion flux $J_\alpha$ and the nonlinear functions $f_1$, ...,  $f_5$. We employ the following three arguments. \vspace*{2mm}

\noindent (i) By assumption (A6), the adhesion flux has the representation $$J_\alpha(\phi_T^n,\phi_N^n,\theta^n)=g(\phi_T^n,\phi_N^n)G(\theta^n),$$ for $g\in C_b(\mathbb{R}^2)$ and $G \in \mathscr{L}(X_\alpha;[L^2]^d)$. On the one hand, we know $\theta^n \rightharpoonup \theta$ weakly in $L^2(0,T;X_\alpha)$ as $n\to \infty$ by (\ref{loc:boundedweak}), which implies by the weak sequential continuity of $G$, $$G\theta^n \rightharpoonup G\theta \quad \text{weakly in} \ L^2(\Omega \times (0,T);\mathbb{R}^d)$$ as $n \to \infty$. On the other hand, we have derived $\phi_T^n \to \phi_T$ and $\phi_N^n \to \phi_N$ strongly in $L^2(\Omega \times (0,T))$ in (\ref{loc:strongconv}). Therefore, applying the Lebesgue dominated convergence theorem yields
$$g(\phi_T^n,\phi_N^n) \nabla w_j \eta \longrightarrow g(\phi_T,\phi_N) \nabla w_j \eta \quad \text{ strongly in } L^2(\Omega \times (0,T);\mathbb{R}^d),$$
as $n \to \infty$. Putting these two results together, we finally have, as $n \to \infty$,
$$J_\alpha(\phi_T^n,\phi_N^n,\theta^n) \nabla w_j \eta \longrightarrow J_\alpha(\phi_T,\phi_N,\theta) \nabla w_j \eta \quad \text{ strongly in } L^1(\Omega \times (0,T)).$$ \vspace{1mm}

\noindent (ii) Since $\mathscr{S}$ and $f_2$ are bounded, continuous functions, we obtain analogously to (i), as $n \to \infty$,
$$\mathscr{S}(\sigma_{VN}-\phi_\sigma^n) f_2(\phi_T^n,\phi_N^n) w_j \eta \longrightarrow \mathscr{S}(\sigma_{VN}-\phi_\sigma^n) f_2(\phi_T,\phi_N) w_j \eta \quad \text{ strongly in } L^2(\Omega \times (0,T)).$$ \vspace{1mm}

\noindent (iii) Similar to (i), we employ that $\theta^n \rightharpoonup \theta$ weakly in $L^2(\Omega \times (0,T))$ and 
$$f_4(\phi_T^n,\phi_N^n,\phi_\sigma^n,\phi_M^n) w_j \eta \longrightarrow  f_4(\phi_T,\phi_N,\phi_\sigma,\phi_M) w_j \eta \quad \text{ strongly in } L^2(\Omega \times (0,T)),$$
as $n \to \infty$, which implies the convergence of their product in $L^1(\Omega \times (0,T))$. Convergence of the terms involving $f_1$, $f_3$, and $f_5$ follows in the same manner. 

\indent Finally, by taking the limit $n \to \infty$ in the system (\ref{loc:systemlimit}), using the density of $\text{span}\{w_1,w_2,\dots\}$ in $H^1$, and the fundamental lemma of calculus of variations, we obtain a solution $(\phi_T,\mu,\phi_N,\phi_\sigma,\phi_M,\theta)$ of the system (\ref{Loc:SystemContinuous}) in the sense of Definition \ref{Definition_loc}.

We note that on account of the standard Sobolev embeddings, we have the following regularity in time of our solution:
\begin{alignat*}{2}
\phi_T, \phi_N &\in  C([0,T];L^2)\cap C_w([0,T];H^1), \\
\phi_\sigma,\phi_M &\in C([0,T];L^2) ,  %\\
%\theta &\in C([0,T];X_\alpha),  
\end{alignat*}
and, thus, initial conditions are meaningful and the Galerkin approximations fulfill the initial data. This completes the proof. \qed

% \begin{remark}
% \normalfont
% Formally integrating the strong form (\ref{mod_problem_loc}) by parts with a suitable test function $\varphi$, the term $-\div J_\alpha$ results in 
% $$\int_\Omega g(\phi_T,\phi_N) G(\theta)  \cdot \nabla \varphi \, \text{d}x -  \int_{\partial \Omega} \varphi \, g(\phi_T,\phi_N) G(\theta) \cdot n \, \textup{d}S,$$
% where we applied the representation of $J_\alpha$, see assumption (A6). If the boundary integral vanishes, we return to the weak formulation given in Definition \ref{Definition_loc}.
% From the integral representation of $\theta$, see (\ref{cont_loc:6}), we can evaluate $G(\theta) \cdot n$. In the local case, $G(\theta)$ equals to $\nabla \theta$, which is expressed in (\ref{Eq:thetagradient}). Provided that the initial condition $\theta_0$ satisfies the compatibility condition $\partial_n \theta_0=0$ on $\partial \Omega$, the homogeneous Neumann data of $\phi_M$ then guarantees $\partial_n \theta = 0$ on $\partial \Omega$.
% In the nonlocal case, we have $G(\theta)=k*\theta$. Then we enforce the condition $k*\theta=0$ on $\partial \Omega$ by a suitable extension of $\theta$. See also \cite{chaplain2011mathematical,bitsouni2017math} for the same choice of boundary data.

% % \textcolor{red}{As it is now, our weak solution does not fit the strong form. Possible solution: Assume $G(\theta)\cdot n =0$ or $J_\alpha(\phi_T,\phi_N,\theta) \cdot n=0$ on the boundary.}
% \end{remark}

\section{Finite Element Approximations} \label{Sec_FEM}
	We select a similar algorithmic framework as in \cite{fritz2018unsteady,lima2014hybrid,lima2015analysis} to solve the deterministic systems of the respective local and nonlocal model with the initial and boundary data (\ref{Initial_bnd_data_loc}). This framework contains a discrete-time local semi-implicit scheme with an energy convex-nonconvex splitting; that means the stable contractive part is treated implicitly and the expansive part explicitly. In particular, recalling the Ginzburg--Landau energy $\mathcal{E}$ in (\ref{Ginzburg}), we split its contractive part $\mathcal{E}_c$ and expansive part $\mathcal{E}_e$ via $\mathcal{E}_e=\mathcal{E}-\mathcal{E}_c$, see also \cite{lima2015analysis,hawkins2012numerical}. 
	
	Let the time domain be divided into the steps $\Delta t_n = t_{n+1} - t_n$ for $n \in \{0,1,\dots\}$. To simplify exposition, we assume $\Delta t_n = \Delta t$ for all $n$. We write $\phi_{T_n}$ for the approximation of $\phi_T^h(t_n)$ and likewise for the other variables. The backward Euler method applied to the system (\ref{mod_problem_loc}) reads
	\begin{equation}
	\begin{aligned}
	\frac{\phi_{T_{n+1}} - \phi_{T_n}}{\Delta t} &= \begin{multlined}[t]  \div  \big(m_T(\phi_{T_{n+1}},\phi_{N_{n+1}}) \nabla \mu_{n+1} \big) -  \div(J_\alpha(\phi_{T_{n+1}},\phi_{N_{n+1}},\theta_{n+1})) \\  + \phi_{\sigma_{n+1}} f_{1,n+1} - \lambda_T^{\text{apo}} \phi_{T_{n+1}} - \lambda_N^{\text{dec}} \phi_{N_{n+1}} \end{multlined} \\[0.1cm]
	\mu_{n+1} &= D_{\phi_T} \mathcal{E}_c(\phi_{T_{n+1}},\phi_{\sigma_{n+1}})-D_{\phi_T} \mathcal{E}_e (\phi_{T_{n}},\phi_{\sigma_{n}}), \\
	\frac{\phi_{N_{n+1}} - \phi_{N_n}}{\Delta t} &=  \mathscr{S}(\sigma_{VN}-\phi_{\sigma_{n+1}}) f_{2,n+1} - \lambda_N^{\textup{deg}} \phi_{N_{n+1}}, \\
	\frac{\phi_{\sigma_{n+1}} - \phi_{\sigma_n}}{\Delta t} &= \div \big(D_\sigma(\theta_{n+1}) (\delta_\sigma^{-1} \nabla \phi_{\sigma_{n+1}} - \chi_C \nabla \phi_{T_{n+1}}) \big) + (\phi_{T_{n+1}}-\phi_{N_{n+1}}) f_{3,n+1}, \\   
    \frac{\phi_{M_{n+1}} - \phi_{M_n}}{\Delta t} &=\div(D_M(\theta_{n+1}) \nabla \phi_{M_{n+1}}) +  \theta_{n+1} f_{4,n+1} - \lambda_M^{\textup{dec}} \phi_{M_{n+1}}  , \\
    \frac{\theta_{{n+1}} - \theta_{n}}{\Delta t} &=- \theta f_{5,n+1}.
	\end{aligned}
	\label{Eq_Numerics}
	\end{equation}
    The functions  $f_{i,n+1}$, $i \in \{1,\dots,5\}$, are given by
    \begin{equation} \label{Eq_FunctionsRHS}
    \begin{aligned}
        f_{1,n+1} &= \lambda_T^{\text{pro}} (\mathcal{C}(\phi_{T_{n+1}})- \mathcal{C}(\phi_{N_{n+1}})) \cdot (1-\mathcal{C}(\phi_{T_{n+1}})), \\
        f_{2,n+1} &= \lambda_{VN} (\mathcal{C}(\phi_{T_{n+1}})- \mathcal{C}(\phi_{N_{n+1}})), \\
        f_{3,n+1} &= \lambda_T^{\text{pro}} (\mathcal{C}(\phi_{T_{n+1}})- \mathcal{C}(\phi_{N_{n+1}})) \frac{\mathcal{C}(\phi_{\sigma_{n+1}})}{\mathcal{C}(\phi_{\sigma_{n+1}})+\lambda_\sigma^{\text{sat}}}, \\
        f_{4,n+1} &= \lambda_M^{\text{pro}} (\mathcal{C}(\phi_{T_{n+1}})-\mathcal{C}(\phi_{N_{n+1}})) \frac{\sigma_H}{\sigma_H+\mathcal{C}(\phi_{\sigma_{n+1}})} (1-\mathcal{C}(\phi_{M_{n+1}})) - \lambda_\theta^{\text{dec}} \mathcal{C}(\phi_{M_{n+1}}),   \\
        f_{5,n+1} &= \lambda_{\theta}^{\text{deg}} \mathcal{C}(\phi_{M_{n+1}}),
    \end{aligned}
    \end{equation}    where $\mathcal{C}$ denotes the cut-off operator,
    $$\mathcal{C}(\sigma) = \max\!\big(0,\min(1,\sigma)\big).$$  The functions $f_{i,n+1}$, $i \in \{1,\dots,5\}$, are selected so that the model given in (\ref{1}), (\ref{N})--(\ref{ECM}) is replicated besides the cut-off operator and the Sigmoid function $\mathscr{S}$ approximating the Heaviside step function $\mathscr{H}$. Furthermore, the functions satisfy the assumptions given in $(\text{A7}_\text{loc})$.
	
	We solve the highly nonlinear coupled system (\ref{Eq_Numerics}) by decoupling the equations and using an iterative Gau\ss--Seidel method. In Algorithm \ref{Alg_Alg} below, the subscript $0$ stands for the initial solution, $k$ the iteration index, $n_{\text{iter}}$ the maximum number of iterations at each time step and TOL the tolerance for the iteration process.
	In each iterative loop, three linear systems are solved and the convergence of the nonlinear solution is achieved at each time if $\max |\phi_{T_{n+1}}^{k+1}-\phi_{T_{n+1}}^{k}| < \text{TOL}$. 
	
	We obtain the algebraic systems using a Galerkin finite element approach. Let $\mathcal{T}^h$ be a quasiuniform family of triangulations of $\Omega$ and let the piecewise linear finite element space be given by
	$$\mathcal{V}^h = \{ v \in C(\overline{\Omega}) : v|_{T} \in P_1(K) \text{ for all } K \in \mathcal{T}^h \} \subset H^1(\Omega),$$
	where $P_1(T)$ denotes the set of all affine linear functions on $T$.
	
	We formulate the discrete problem as follows: for each $k$, find $$\big(\phi_{T_{n+1}}^{k+1},\  \mu_{n+1}^{k+1},\ \phi_{N_{n+1}}^{k+1},\ \phi_{\sigma_{n+1}}^{k+1},  \ \phi_{M_{n+1}}^{k+1},\ \theta_{n+1}^{k+1}\big) \in (\mathcal{V}^h)^6,$$ 
	for all 
	$$\big( \varphi_T,\ \varphi_\mu, \ \varphi_N, \ \varphi_\sigma, \ \varphi_M, \ \varphi_\theta\big) \in (\mathcal{V}^h)^6,$$ such that:
	\begin{equation} \begin{aligned}
	(\phi_{\sigma_{n+1}}^{k+1} - \phi_{\sigma_n},\varphi_\sigma) &+\Delta t\big(D_M(\theta_{n+1}^k) \cdot (\delta_\sigma^{-1} \nabla \phi_{\sigma_{n+1}}^{k+1} - \chi_C \nabla \phi^k_{T_{n+1}}),\nabla \varphi_\sigma\big) 
	\\&- \Delta t\lambda_T^{\text{pro}} \! \left(\!(\phi_{T_{n+1}}^k\!-\phi_{N_{n+1}}^k)  (\mathcal{C}(\phi^k_{T_{n+1}})- \mathcal{C}(\phi^k_{N_{n+1}})) \frac{\mathcal{C}(\phi^{k+1}_{\sigma_{n+1}})}{\mathcal{C}(\phi^{k+1}_{\sigma_{n+1}})+\lambda_\sigma^{\text{sat}}},\varphi_\sigma\!\!\right) = 0;
	\label{Alg_sigma}
	\end{aligned}
	\end{equation}
		\begin{equation} \begin{aligned}
	(\phi_{T_{n+1}}^{k+1} - \phi_{T_n},\varphi_T)
	&+\Delta t  \big(m_T(\phi_{T_{n+1}}^{k+1},\phi^{k+1}_{N_{n+1}}) \nabla \mu_{n+1}^{k+1},\nabla \varphi_T\big) \\&-\Delta t  ( J_\alpha(\phi^{k+1}_{T_{n+1}},\phi^k_{N_{n+1}},\theta_{n+1}^k),\nabla \varphi_T) 
	\\ &-\Delta t \lambda_T^{\text{pro}} ((\mathcal{C}(\phi^{k+1}_{T_{n+1}})- \mathcal{C}(\phi^k_{N_{n+1}})) \cdot (1-\mathcal{C}(\phi^{k+1}_{T_{n+1}})),\phi_{\sigma_{n+1}}^{k+1} \varphi_T) 
	\\ &+\Delta t  (\lambda_T^{\text{apo}} \phi_{T_{n+1}}^{k+1} + \lambda_N^{\text{dec}} \phi_{N_{n+1}}^k,\varphi_T)=0;
	\label{Alg_T}
	\end{aligned}
	\end{equation}
	\begin{equation} \begin{aligned}
	(\mu_{n+1}^{k+1},\varphi_\mu) - \big(D_{\phi_T} \mathcal{E}_c(\phi^{k+1}_{T_{n+1}},\phi^{k+1}_{\sigma_{n+1}}),\varphi_\mu\big)=\big(D_{\phi_T} \mathcal{E}_e (\phi_{T_{n}},\phi_{\sigma_{n}}),\varphi_\mu\big);
	\label{Alg_mu}
	\end{aligned}
	\end{equation}
	\begin{equation} \begin{aligned}
	(\phi_{N_{n+1}}^{k+1} - \phi_{N_n},\varphi_N)  &-\Delta t  \big(\mathcal{C}(\phi_{T_{n+1}}^{k+1}-\phi_{N_{n+1}}^{k+1}),\mathscr{S}(\sigma_{VN}-\phi_{\sigma_{n+1}}^{k+1}) \varphi_N\big) 
	\\&+ \Delta t \lambda_N^{\text{deg}} (\phi_{N_{n+1}}^{k+1},\varphi_N) = 0;
	\label{Alg_N}
	\end{aligned}
	\end{equation}
		\begin{equation} \begin{aligned}
	(\phi_{M_{n+1}}^{k+1} - \phi_{M_n},\varphi_M)
	&+\Delta t (D_M(\theta_{n+1}^k) \nabla \phi_{M_{n+1}}^{k+1},\nabla \varphi_M)
	\\&-\Delta t \lambda_M^{\text{pro}} \!\!\left(\!\theta_{n+1}^{k} (\mathcal{C}(\phi^{k+1}_{T_{n+1}})-\mathcal{C}(\phi^{k+1}_{N_{n+1}})) \frac{\sigma_H}{\sigma_H+\mathcal{C}(\phi^{k+1}_{\sigma_{n+1}})} (1-\mathcal{C}(\phi^{k+1}_{M_{n+1}})) ,\varphi_M\!\!\right)\!\!\!
	\\&+\Delta t \lambda_\theta^{\text{dec}} (\theta_{n+1}^{k}  \mathcal{C}(\phi^{k+1}_{M_{n+1}}) ,\varphi_M)
	\\&+\Delta t \lambda_M^{\text{dec}} (\phi_{M_{n+1}}^{k+1},\varphi_M) = 0;
	\label{Alg_MDE}
	\end{aligned}
	\end{equation}
		\begin{equation} \begin{aligned}
	(\theta_{{n+1}}^{k+1} - \theta_{n},\varphi_\theta)+\Delta t \lambda_\theta^{\text{deg}} (\theta_{n+1}^{k+1} \mathcal{C}(\phi_{M_{n+1}}^{k+1}),\varphi_\theta) = 0.
	\label{Alg_ECM}
	\end{aligned}
	\end{equation}
	
	\begin{algorithm}[H] \small
		\SetAlgoLined
		\caption{Semi-implicit scheme for (\ref{Eq_Numerics})} \label{Alg_Alg} \vspace{2mm}
		\textbf{Input}: $\phi_{T_0}, \phi_{N_0}, \phi_{\sigma_0}, \phi_{M_0}, \theta_0, \Delta t, T, \text{TOL}$\\[2mm]
		\textbf{Output}: $\phi_{T_n}, \mu_n, \phi_{N_n}, \phi_{\sigma_n}, \phi_{M_n}, \theta_n$ for all $n$ \\[2mm]
		$n=0$ \\[2mm]
		$t=0$\vspace{2mm}\\
		\While{$t\leq T$ \vspace{1mm}}{ 
			$\phi^0_{T_{n+1}} = \phi_{T_n}$, $\phi^0_{N_{n+1}} = \phi_{N_n}$, $\phi^0_{\sigma_{n+1}} = \phi_{\sigma_n}$, $\phi^0_{M_{n+1}} = \phi_{M_n}$, $\theta^0_{{n+1}} = \theta_{n}$ \\[2mm]
			\While{$\displaystyle \textup{max} \| \phi_{T_{n+1}}^{k+1} - \phi_{T_{n+1}}^k\| > \textup{TOL}$ \vspace{1mm}}{
				$\phi_{T_{n+1}}^k = \phi_{T_{n+1}}^{k-1}$, $\phi_{N_{n+1}}^k = \phi_{N_{n+1}}^{k-1}$, $\phi_{\sigma_{n+1}}^k = \phi_{\sigma_{n+1}}^{k-1}$, $\phi_{M_{n+1}}^k = \phi_{M_{n+1}}^{k-1}$, $\theta_{{n+1}}^k = \theta_{{n+1}}^{k-1}$
				\\[2mm]
				\textbf{solve \bm{$\phi_{\sigma_{n+1}}^{k+1}$} using \eqref{Alg_sigma}, given} $\phi_{\sigma_n}, \phi_{T_{n+1}}^k, \phi_{N_{n+1}}^k$\\[2mm]
				\textbf{solve \bm{$\phi_{T_{n+1}}^{k+1}$}, \bm{$\mu_{n+1}^{k+1}$} using \eqref{Alg_T} and \eqref{Alg_mu}, given} $\phi_{T_n}, \phi_{T_{n+1}}^k, \phi_{N_{n+1}}^k, \phi_{\sigma_{n+1}}^{k+1}, \theta_{n+1}^k$ \\[2mm]
				\textbf{solve \bm{$\phi_{N_{n+1}}^{k+1}$} using \eqref{Alg_N}, given} $\phi_{N_n}, \phi_{T_{n+1}}^{k+1}, \phi_{\sigma_{n+1}}^{k+1}$\\[2mm]
				\textbf{solve \bm{$\phi_{M_{n+1}}^{k+1}$} using \eqref{Alg_MDE}, given} $\phi_{M_n}, \phi_{T_{n+1}}^{k+1}, \phi_{\sigma_{n+1}}^{k+1}, \theta_{n+1}^k$\\[2mm]
				\textbf{solve \bm{$\theta_{{n+1}}^{k+1}$} using \eqref{Alg_ECM}, given} $\theta_n, \phi_{M_{n+1}}^{k+1}$\\[2mm]
				$k \mapsto k+1$\\} 
			$\phi_{T_{n+1}}= \phi_{T_{n+1}}^{k+1}$, $\mu_{n+1} = \mu_{n+1}^{k+1}$, $\phi_{N_{n+1}} = \phi_{N_{n+1}}^{k+1}$\\[2mm] $\phi_{\sigma_{n+1}} = \phi_{\sigma_{n+1}}^{k+1}$, $\phi_{M_{n+1}} = \phi_{M_{n+1}}^{k+1}$, $\theta_{n+1} = \theta_{n+1}^{k+1}$ \\[2mm]
			$n \mapsto n+1$\\[2mm]
			$t \mapsto t+\Delta t$} 
	\end{algorithm}
	~\\
	
	We implemented Algorithm \ref{Alg_Alg} in libMesh \cite{libMeshPaper}, an open-source computing platform for solving partial differential equations using finite element methods. We use this implementation to obtain the numerical results below. 
\section{Numerical Simulations} \label{Sec_Simulations}

In this section, numerical approximations of the growth of the tumor volume fractions $\phi_T$ and the simulation of the other variables in the local and nonlocal model (\ref{mod_problem_loc}) obtained by implementing Algorithm \ref{Alg_Alg} are presented. We present a numerical experiment of the local model both in two and three dimension in the domain $\Omega=(-1,1)^d$, $d \in \{2,3\}$. Afterwards, we compare the growth of the tumor volume fraction in the local and nonlocal model in two dimensions. 

We impose for the nutrient concentrations an inhomogeneous Dirichlet boundary condition at $x_1 = 1$, namely $\phi_\sigma=1$. This is a slight modification to the analyzed model in Section \ref{Sec_Analysis}, but the existence proof can be adapted in a straightforward way, see \cite{fritz2018unsteady}.\\

We choose for the parameters in our system (\ref{mod_problem_loc}) the dimensionless values
	\begin{equation*} \begin{alignedat}{9} 
	\varepsilon_T &=0.005, \quad &\chi_C &=0, \quad &\chi_H &=0.001, \quad &\delta_\sigma &=0.01, \quad &\delta_T &=0, \\
	\lambda_T^\text{pro} &=2, \quad &\lambda_T^\text{apo} &=0.005, \quad &\lambda_N^\text{deg} &=0, \quad &\lambda_{VN} &=1, \quad &\lambda_\sigma^\text{sat} &=0, \\
	\lambda_M^\text{dec} &=1, \quad &\lambda_M^\text{pro} &=1, \quad &\lambda_\theta^\text{dec} &=0.1, \quad &\lambda_{\theta}^{\text{deg}} &=1, \quad &\overline{E} &=0.045, \\
		\sigma_H &=0.6, \quad &\sigma_{VN} &=0.44, \quad &M_T &=2, \quad &D_\sigma &=0.001, \quad &D_M &=0.1. \\
	\end{alignedat}
	\label{Eq_ParametersSimulation}
	\end{equation*}
	
\pagebreak

\subsection{Local model in two dimensions}

In Figure \ref{FigureLocalPhi2D}, the computed simulations of the volume fractions of tumor cells ($\phi_T$), necrotic cells ($\phi_N$) and viable cells ($\phi_V$) for a local model in a two-dimensional domain are shown at four different time points $t \in \{0,5,10,15\}$. For the initial conditions, we start off from a small circular concentration of tumor cells without a necrotic part, that means $\phi_T=\phi_V$ at $t=0$. 

In the first row of Figure \ref{FigureLocalPhi2D}, one observes that the tumor volume fraction $\phi_T$ evolves towards the nutrient-rich part of the domain, see also Figure \ref{FigureLocalNut2D} below for the simulation of $\phi_\sigma$. As the transition between tumor phenotypes is guided by the nutrient concentration, the necrotic concentration increases in the nutrient-poor region, see the second row in Figure \ref{FigureLocalPhi2D}. Moreover, in the third row in Figure \ref{FigureLocalPhi2D}, the viable tumor cells, responsible for the tumor growth, are concentrated closer to the right side of the domain, which is the region with higher nutrient concentration. 

% In the first row of Figure \ref{FigureLocalNut2D}, the extracellular matrix density ($\theta$) is degraded over time by the matrix degrading enzymes ($\phi_{M}$). These enzymes are released by the tumor cell, mainly at regions with low nutrient and high ECM density. The nutrient concentration decreases as the tumor grows, with a higher value of $\phi_\sigma$ towards the boundary on the right hand side of the domain $\Omega=(-1,1)^2$, due to the imposed Dirichlet boundary condition $\phi_\sigma=1$ at $x_1=1$.
%Note that the actual domain is larger than shown in the figures so that no boundary effects are involved.

	\begin{figure}[H]
	\centering
\begin{tikzpicture}
\draw (0, 0) node[inner sep=0] {\includegraphics[width=.225\textwidth]{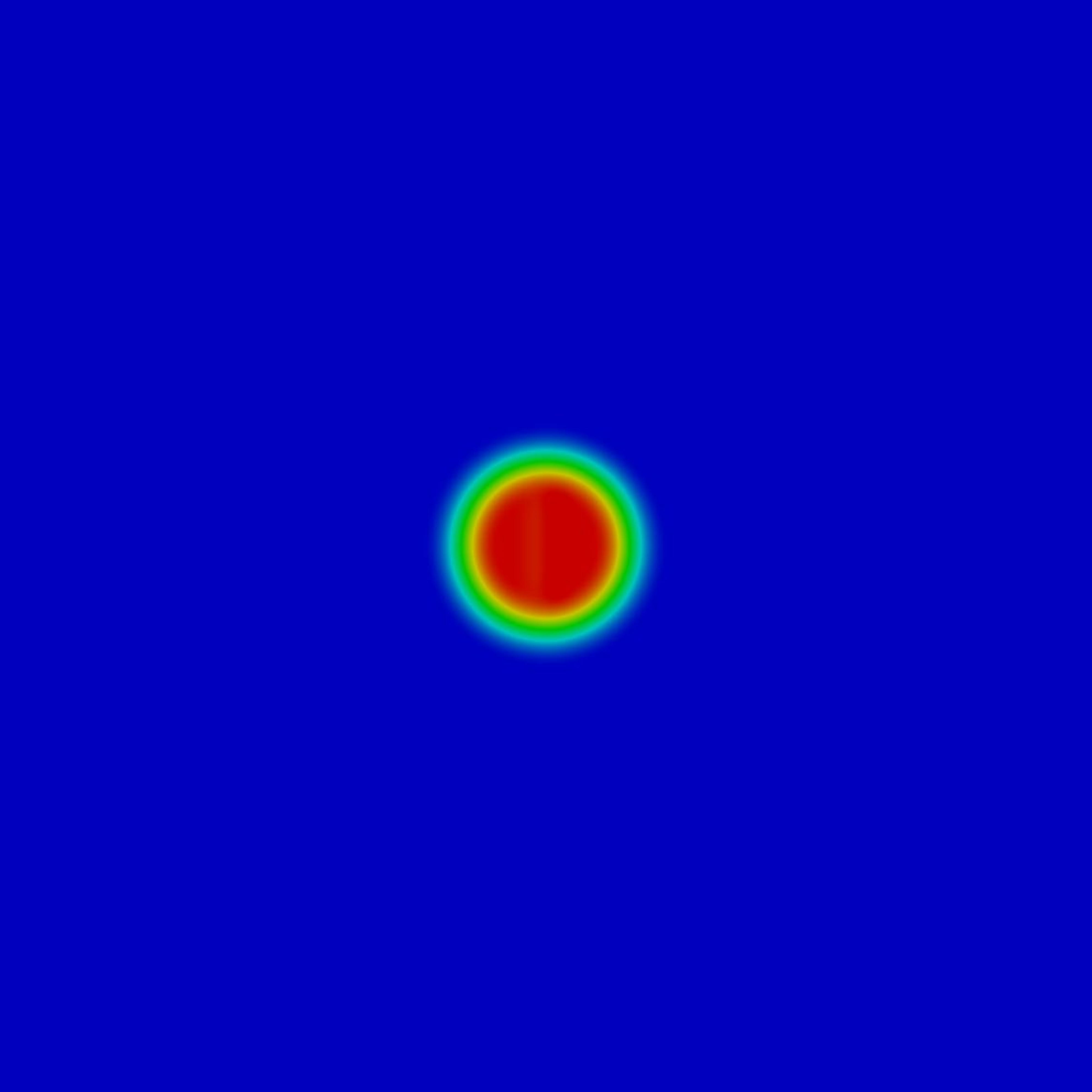}};
\draw (.1, 1.9) node {$t=0$};
\draw (-2,0)  node {$\phi_T$};
\end{tikzpicture}
\begin{tikzpicture}
\draw (0, 0) node[inner sep=0] {\includegraphics[width=.225\textwidth]{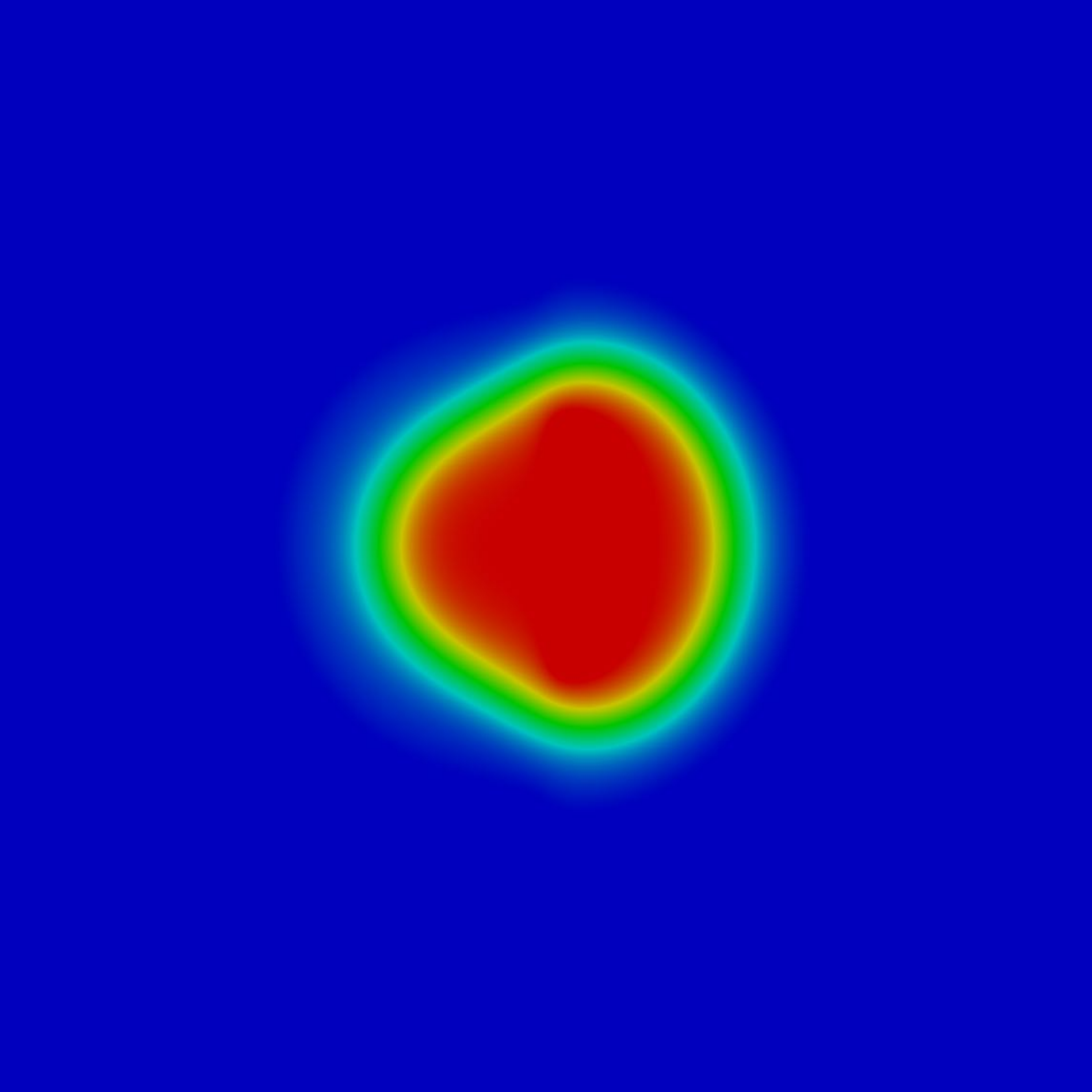}};
\draw (.1, 1.9) node {$t=5$};
\end{tikzpicture}
\begin{tikzpicture}
\draw (0, 0) node[inner sep=0] {\includegraphics[width=.225\textwidth]{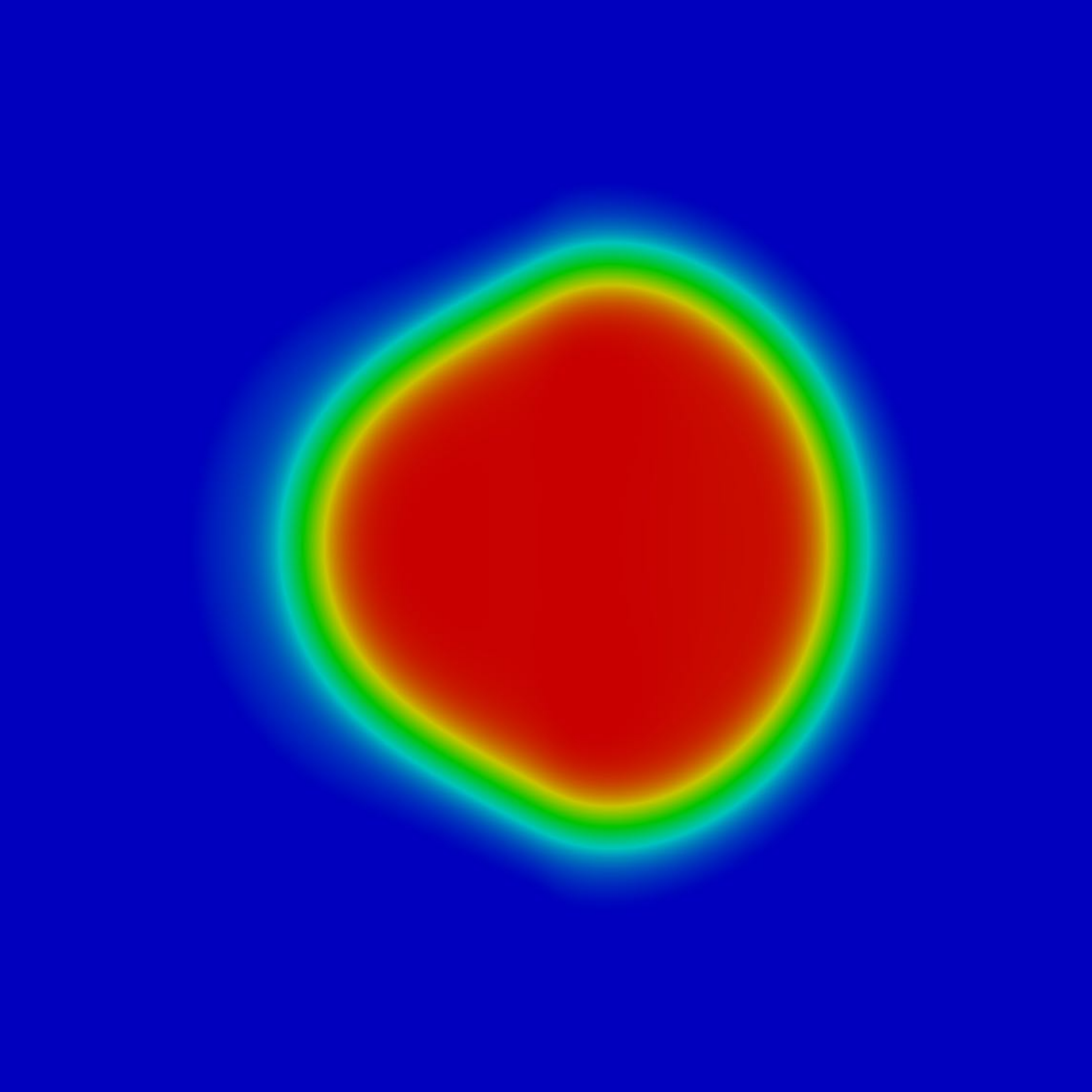}};
\draw (.1, 1.9) node {$t=10$};
\end{tikzpicture}
\begin{tikzpicture}
\draw (0, 0) node[inner sep=0] {\includegraphics[width=.225\textwidth]{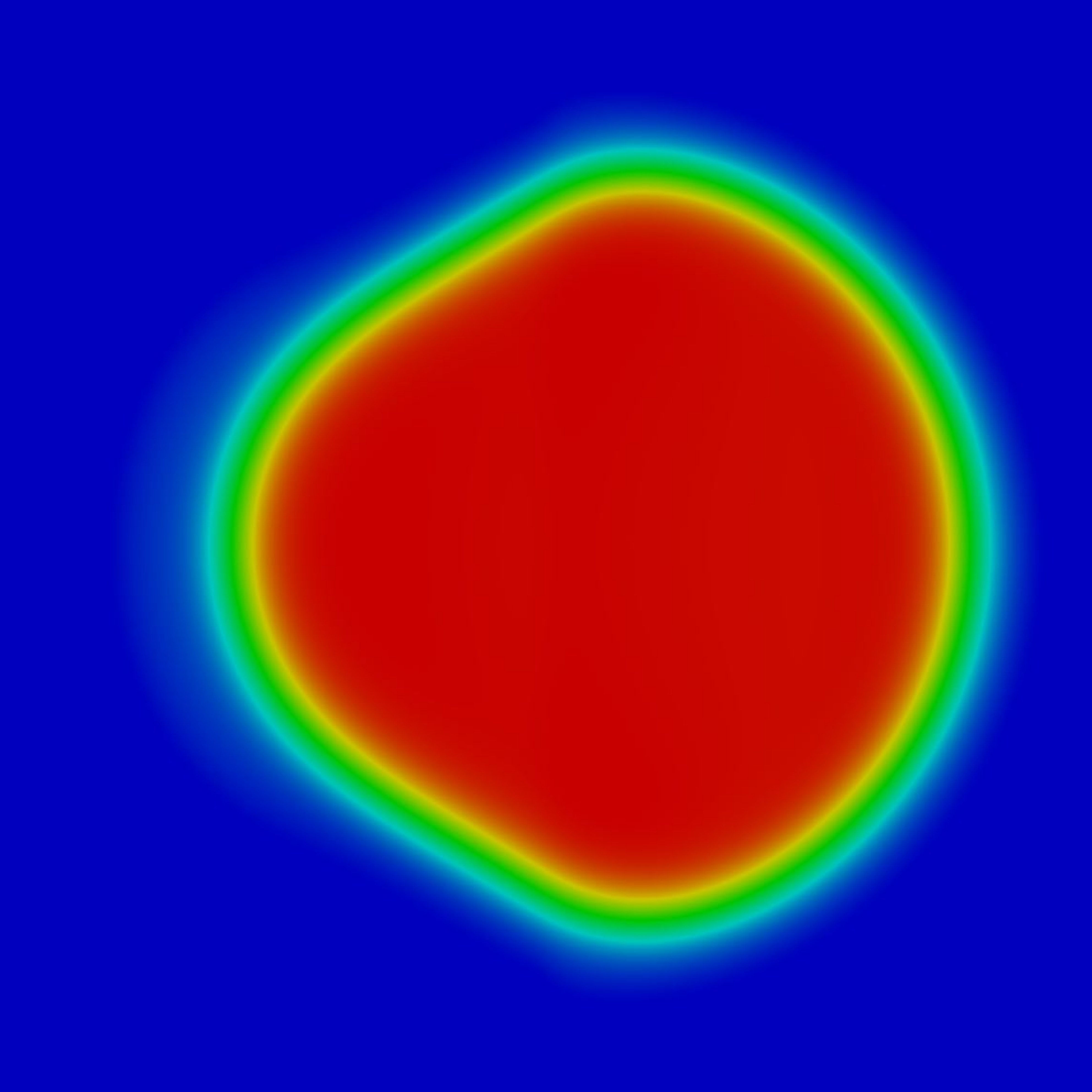}};
\draw (.1, 1.9) node {$t=15$};
\end{tikzpicture} \\[0.1cm]
\begin{tikzpicture}
\draw (0, 0) node[inner sep=0] {\includegraphics[width=.225\textwidth]{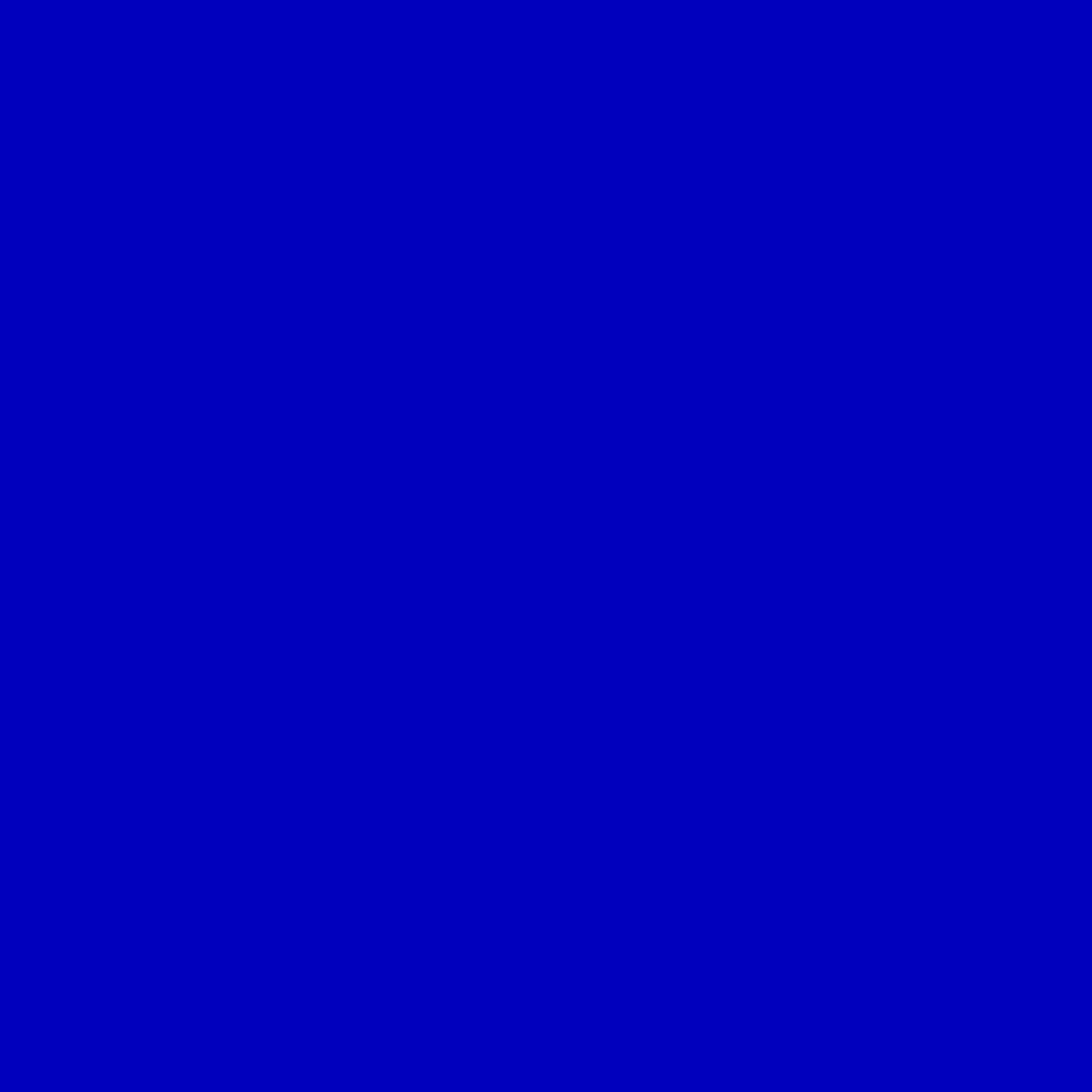}};
\draw (-2,0)  node {$\phi_N$};
\end{tikzpicture}
	\includegraphics[width=.225\textwidth]{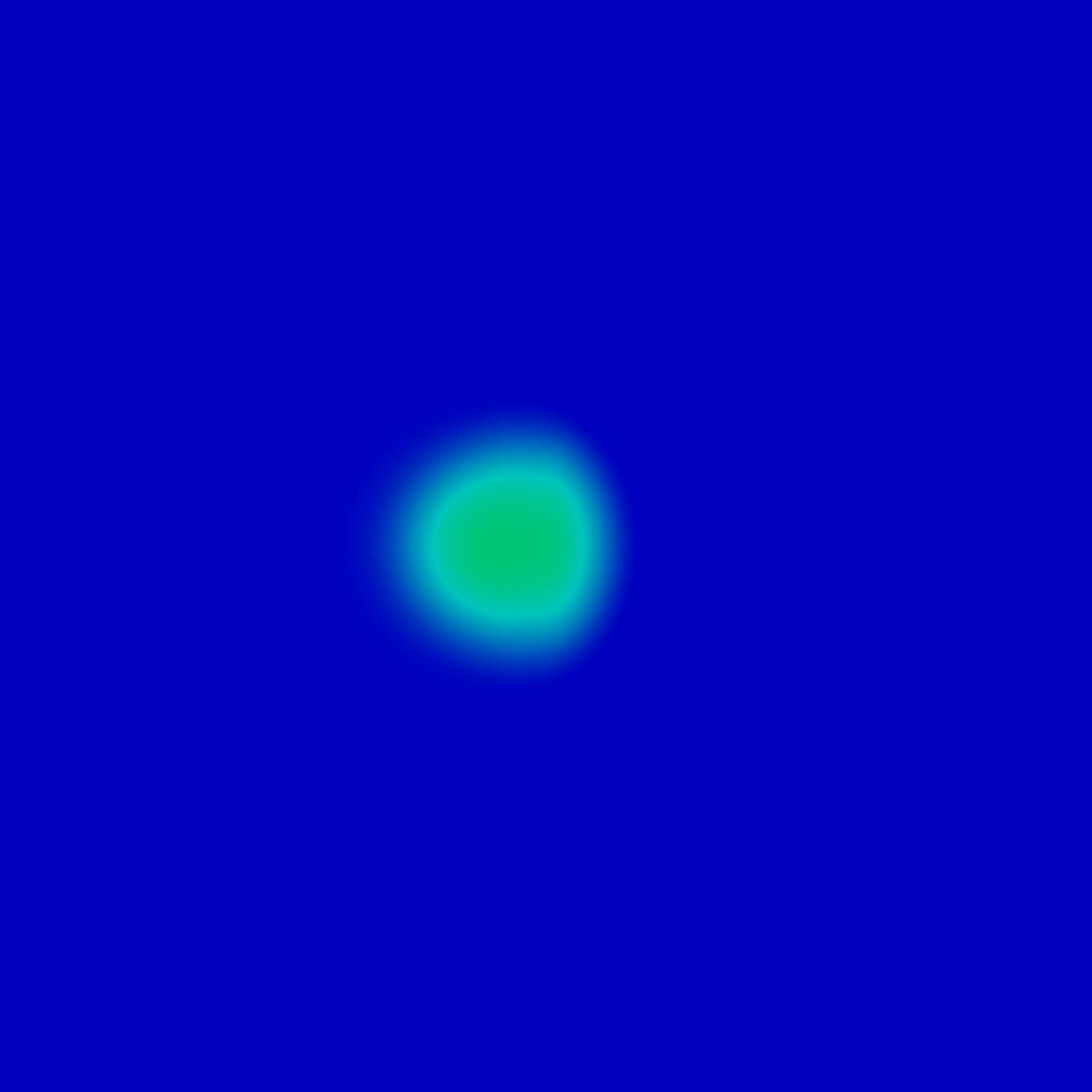}
	\includegraphics[width=.225\textwidth]{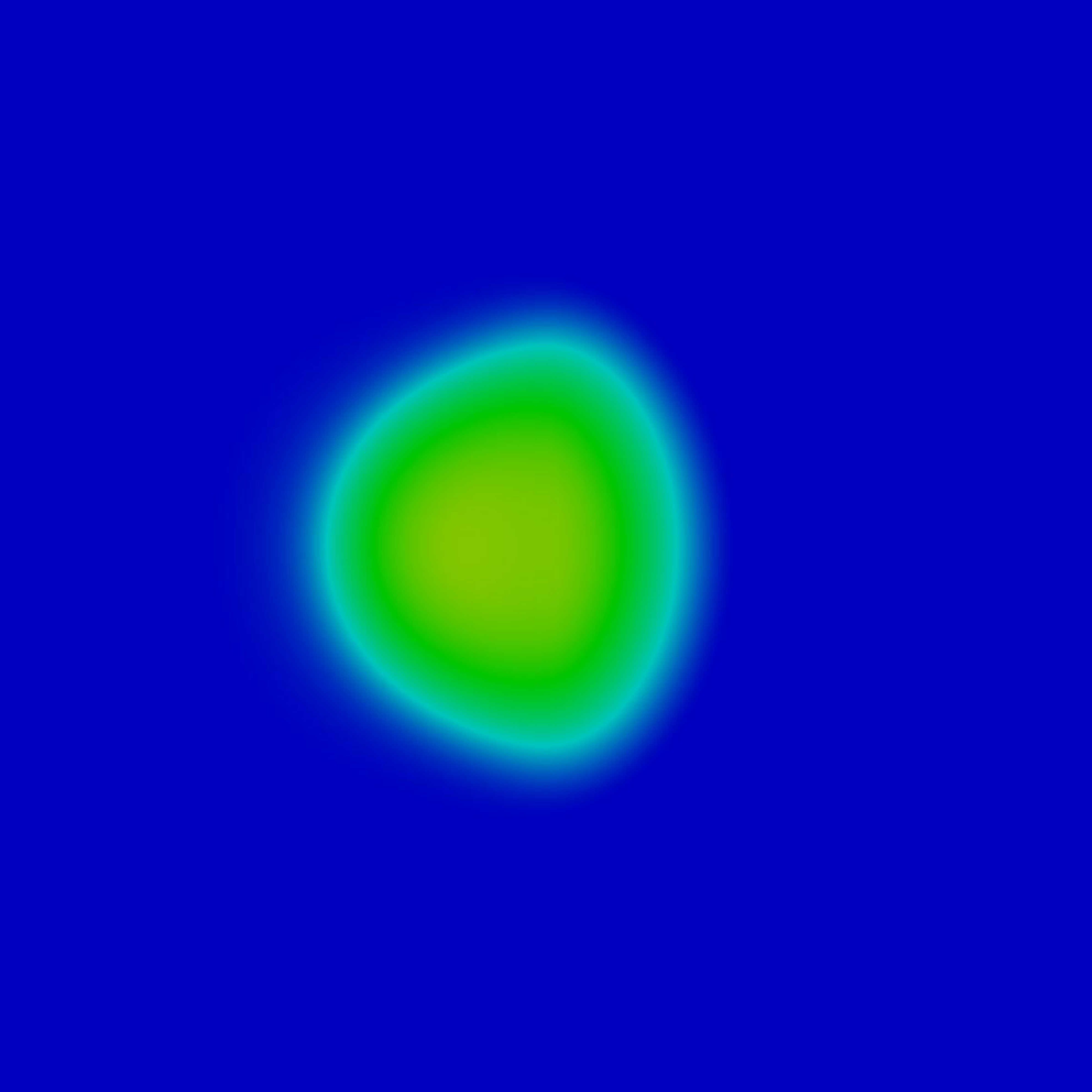}
	\includegraphics[width=.225\textwidth]{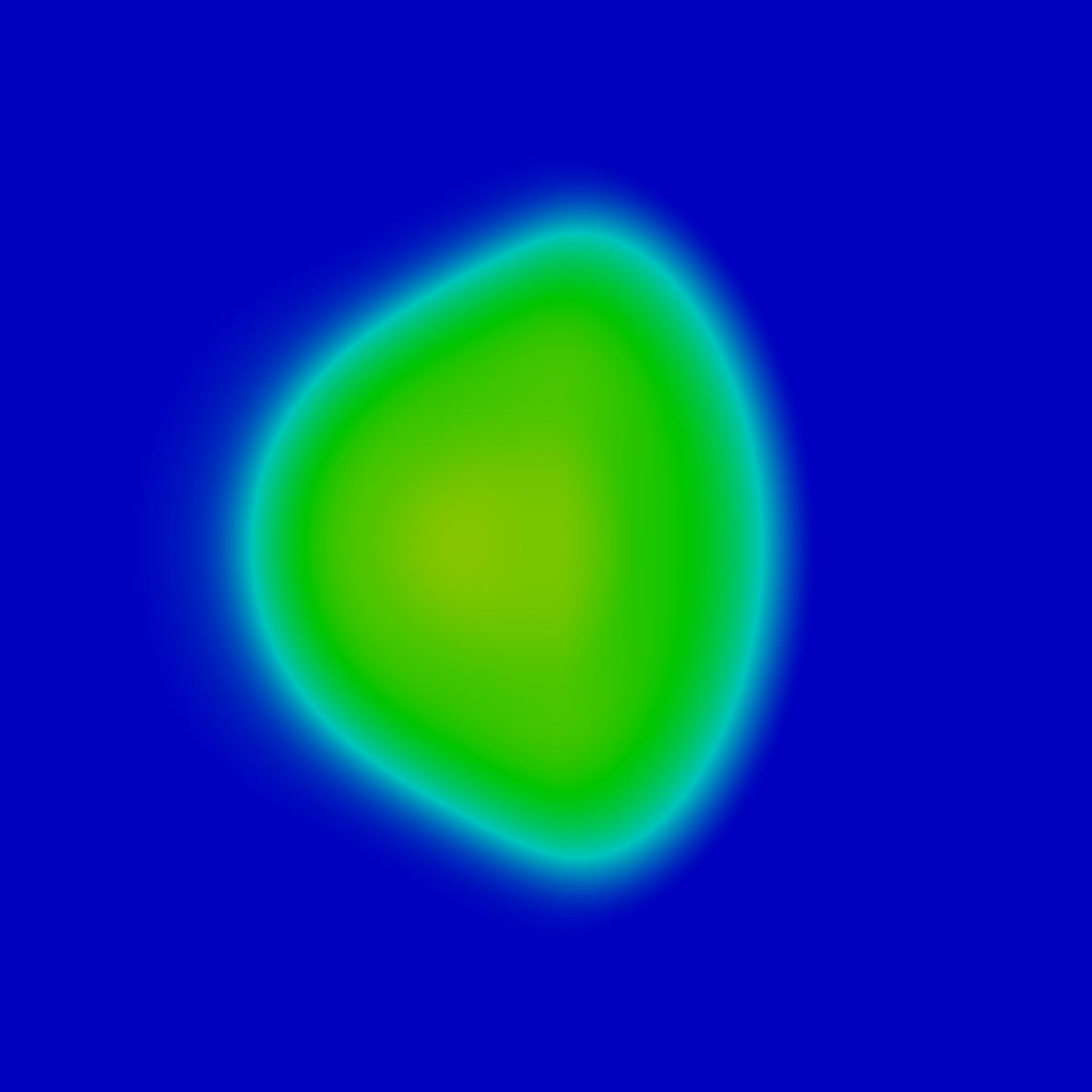} \\[0.1cm]
\begin{tikzpicture}
\draw (0, 0) node[inner sep=0] {\includegraphics[width=.225\textwidth]{FiguresLocal2D/tum0}};
\draw (-2,0)  node {$\phi_V$};
\end{tikzpicture}
	\includegraphics[width=.225\textwidth]{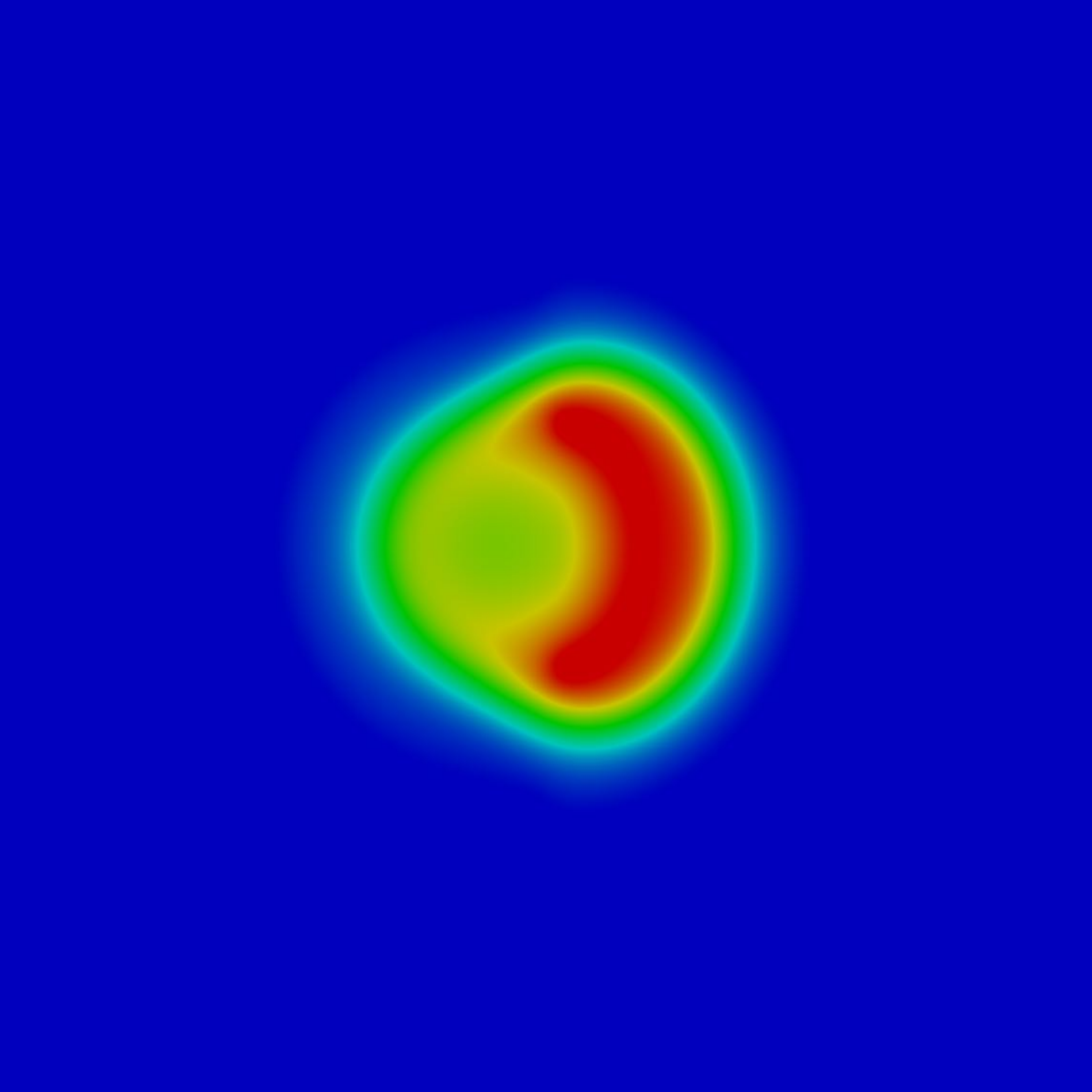}
	\includegraphics[width=.225\textwidth]{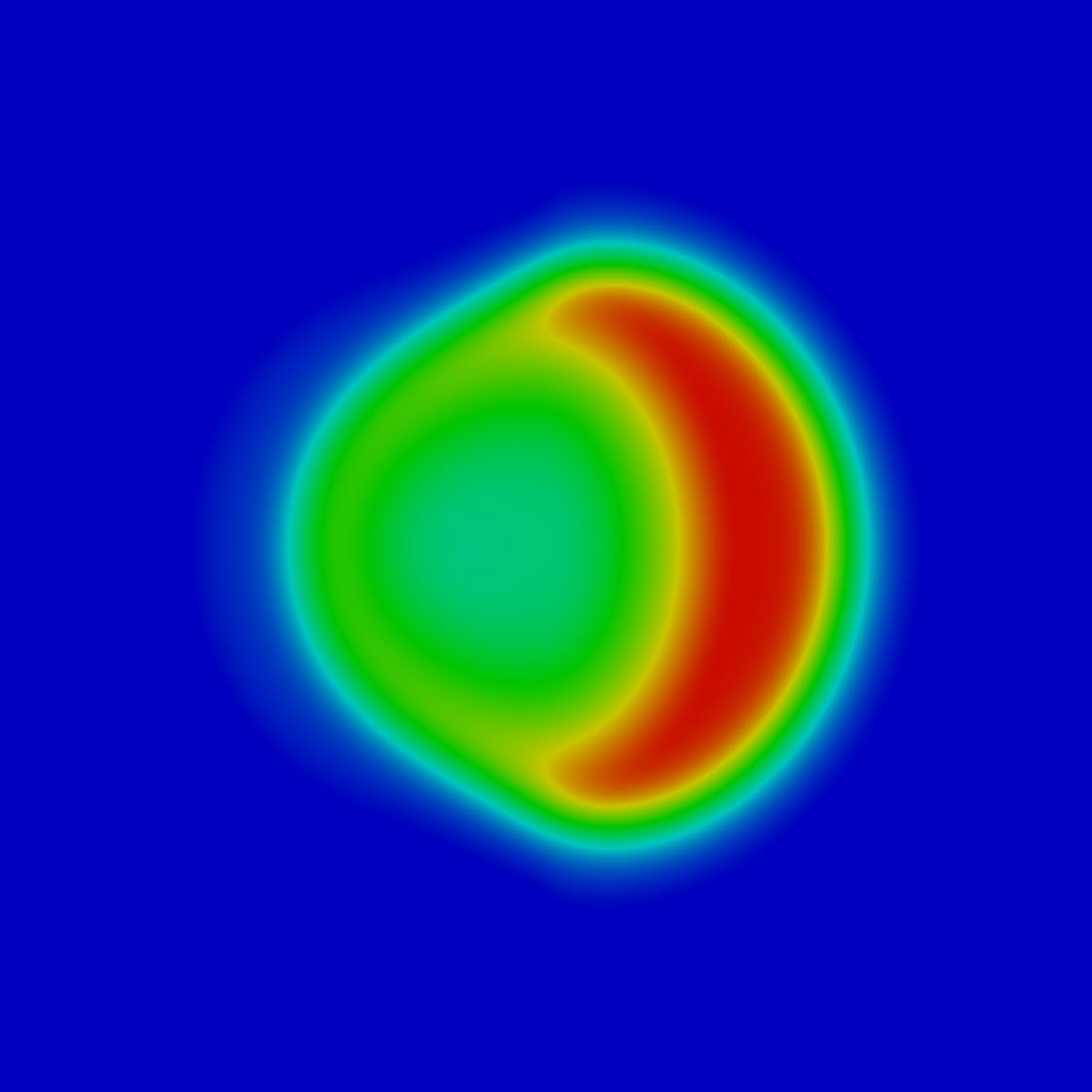}
	\includegraphics[width=.225\textwidth]{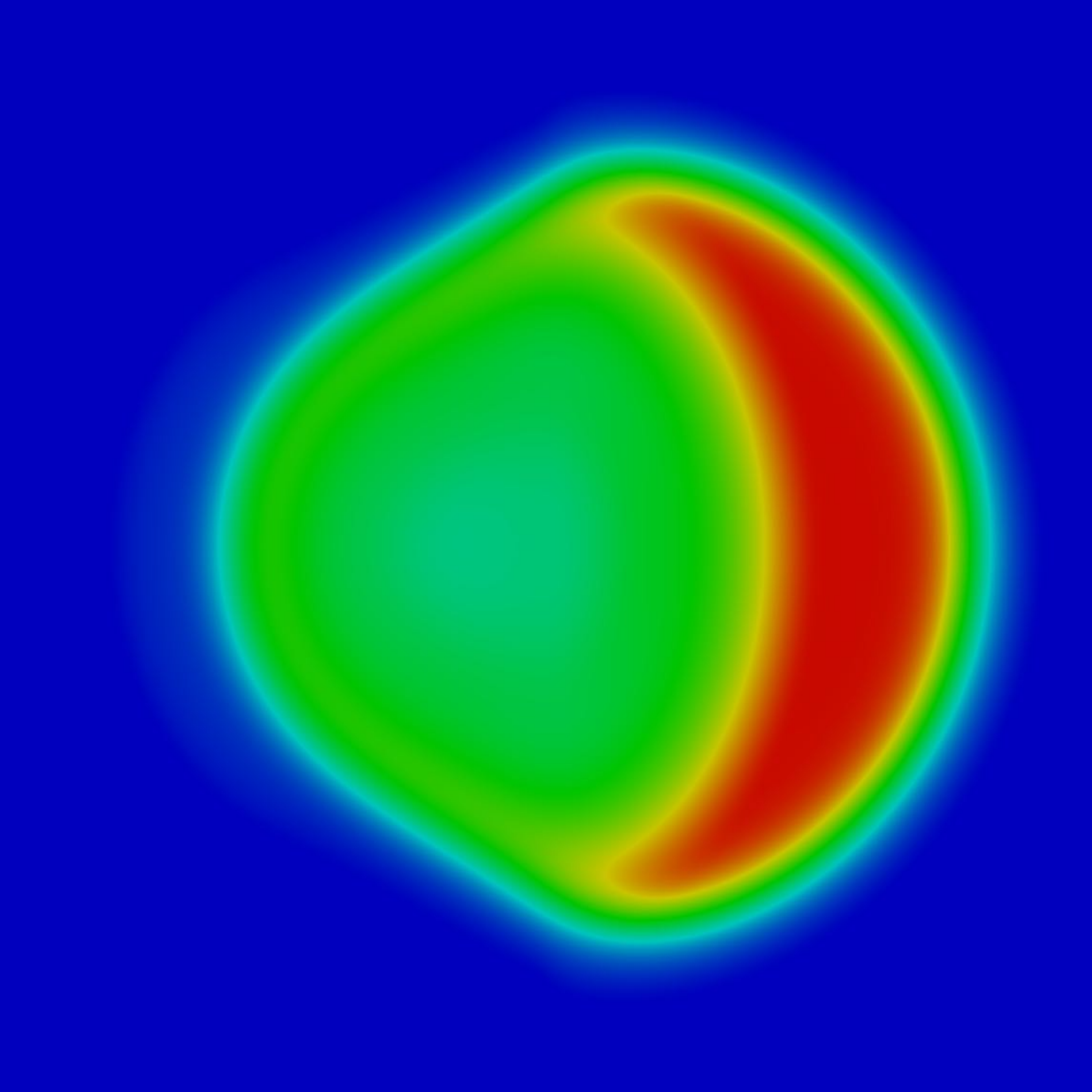} \\
	\hspace{.59cm} \begin{tikzpicture}
	\begin{axis}[
	hide axis,
	scale only axis,
	height=0pt,
	width=0pt,
	colormap name=rainbow,
	colorbar horizontal,
	point meta min=0,
	point meta max=1,
	colorbar style={
		samples=100,
		height=.5cm,
		xtick={0,0.5,1},
		width=10cm
	},
	]
	\addplot [draw=none] coordinates {(0,0)};
	\end{axis}
	\end{tikzpicture} 
%\vspace{-0.2cm}
%\includegraphics[width=.99\textwidth]{LowQuality/Local2DTUM_2}
	\caption{Simulation of the volume fractions $\phi_T$, $\phi_N$, $\phi_V$ in the local model in the 2D domain $\Omega=(-1,1)^2$; the evolution of the tumor, necrotic and viable cells is shown at the times $t\in \{0,5,10,15\}$}
	\label{FigureLocalPhi2D}
\end{figure}

In the first row of Figure \ref{FigureLocalNut2D}, the extracellular matrix density ($\theta$) is degraded over time by the matrix degrading enzymes ($\phi_{M}$). These enzymes are released by the tumor cells, mainly at regions with low nutrient and high ECM density. The nutrient concentration decreases as the tumor grows, with a higher value of $\phi_\sigma$ towards the boundary on the right hand side of the domain $\Omega=(-1,1)^2$, due to the imposed Dirichlet boundary condition $\phi_\sigma=1$ at $x_1=1$.

	\begin{figure}[H]
	\centering
\begin{tikzpicture}
\draw (0, 0) node[inner sep=0] {\includegraphics[width=.225\textwidth]{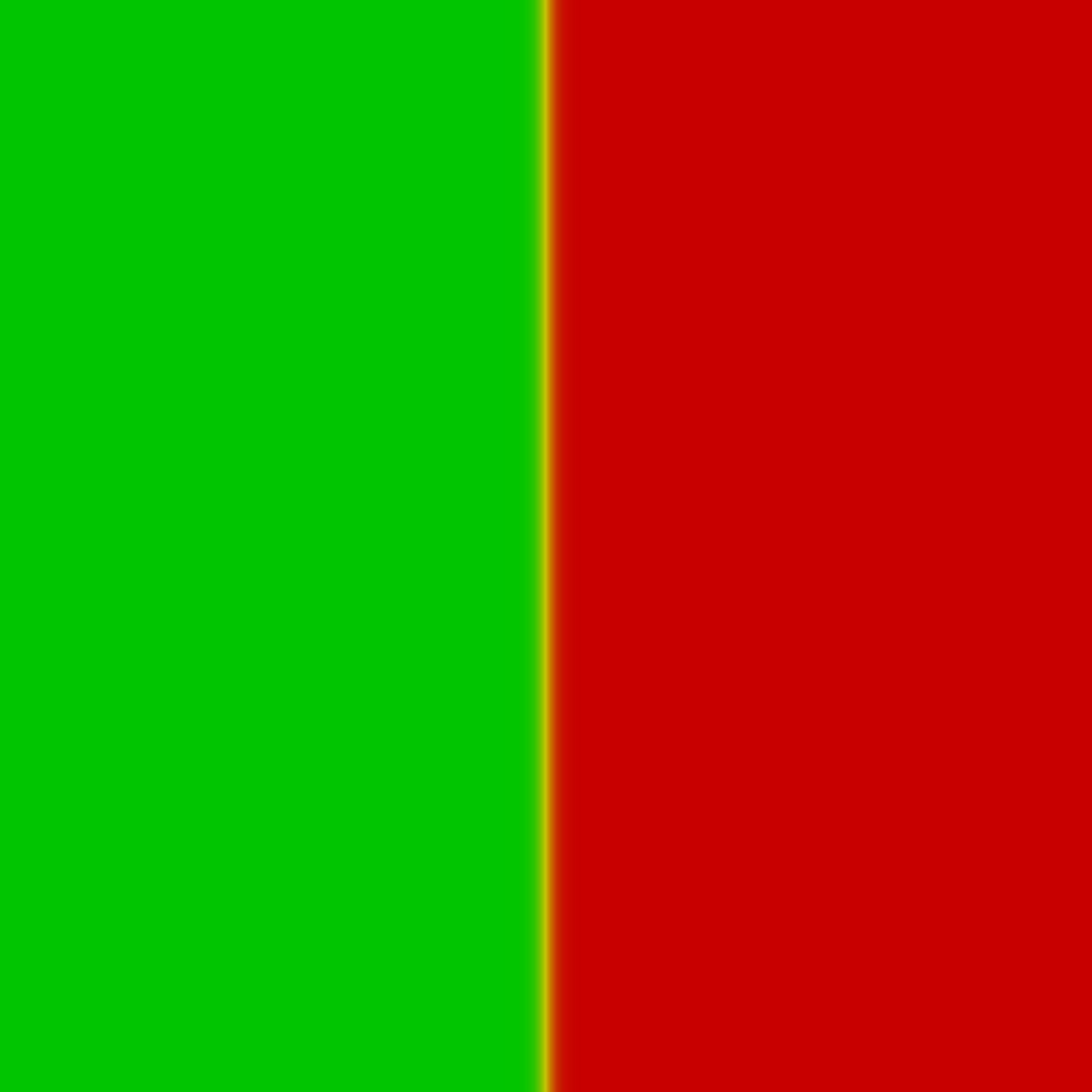}};
\draw (.1, 1.9) node {$t=0$};
\draw (-2,0)  node {$\theta_{\phantom{T}}$};
\end{tikzpicture}
\begin{tikzpicture}
\draw (0, 0) node[inner sep=0] {\includegraphics[width=.225\textwidth]{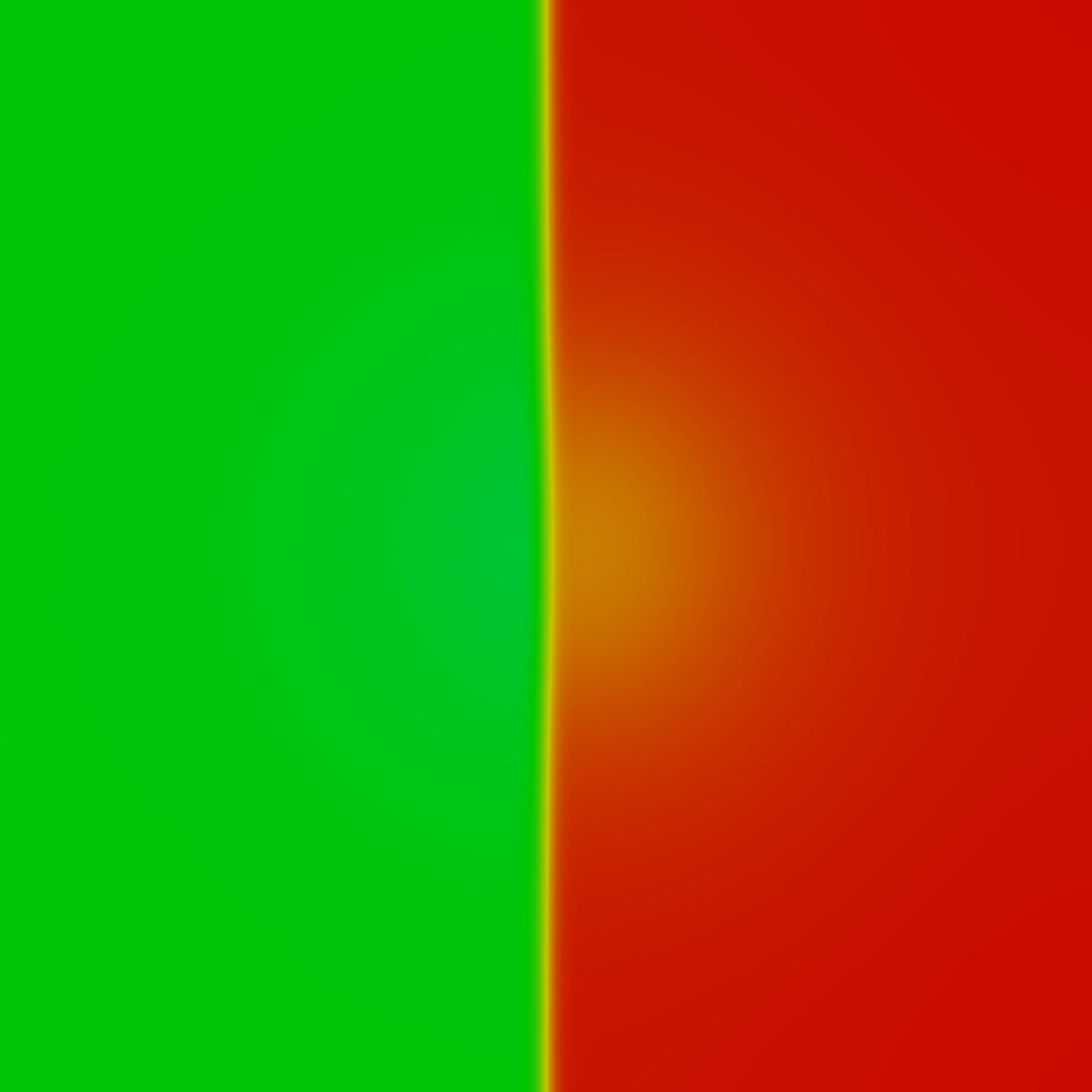}};
\draw (.1, 1.9) node {$t=5$};
\end{tikzpicture}
\begin{tikzpicture}
\draw (0, 0) node[inner sep=0] {\includegraphics[width=.225\textwidth]{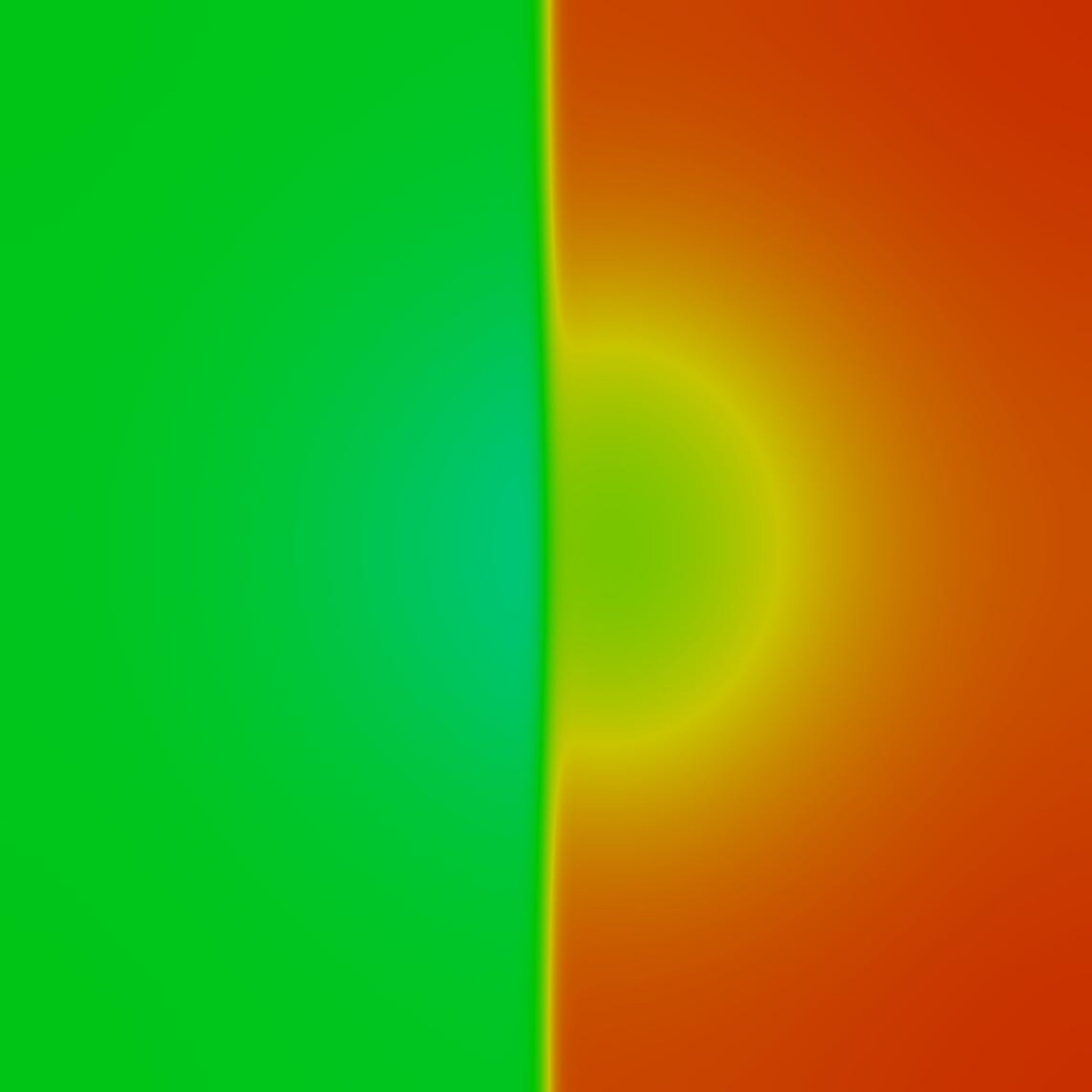}};
\draw (.1, 1.9) node {$t=10$};
\end{tikzpicture}
\begin{tikzpicture}
\draw (0, 0) node[inner sep=0] {\includegraphics[width=.225\textwidth]{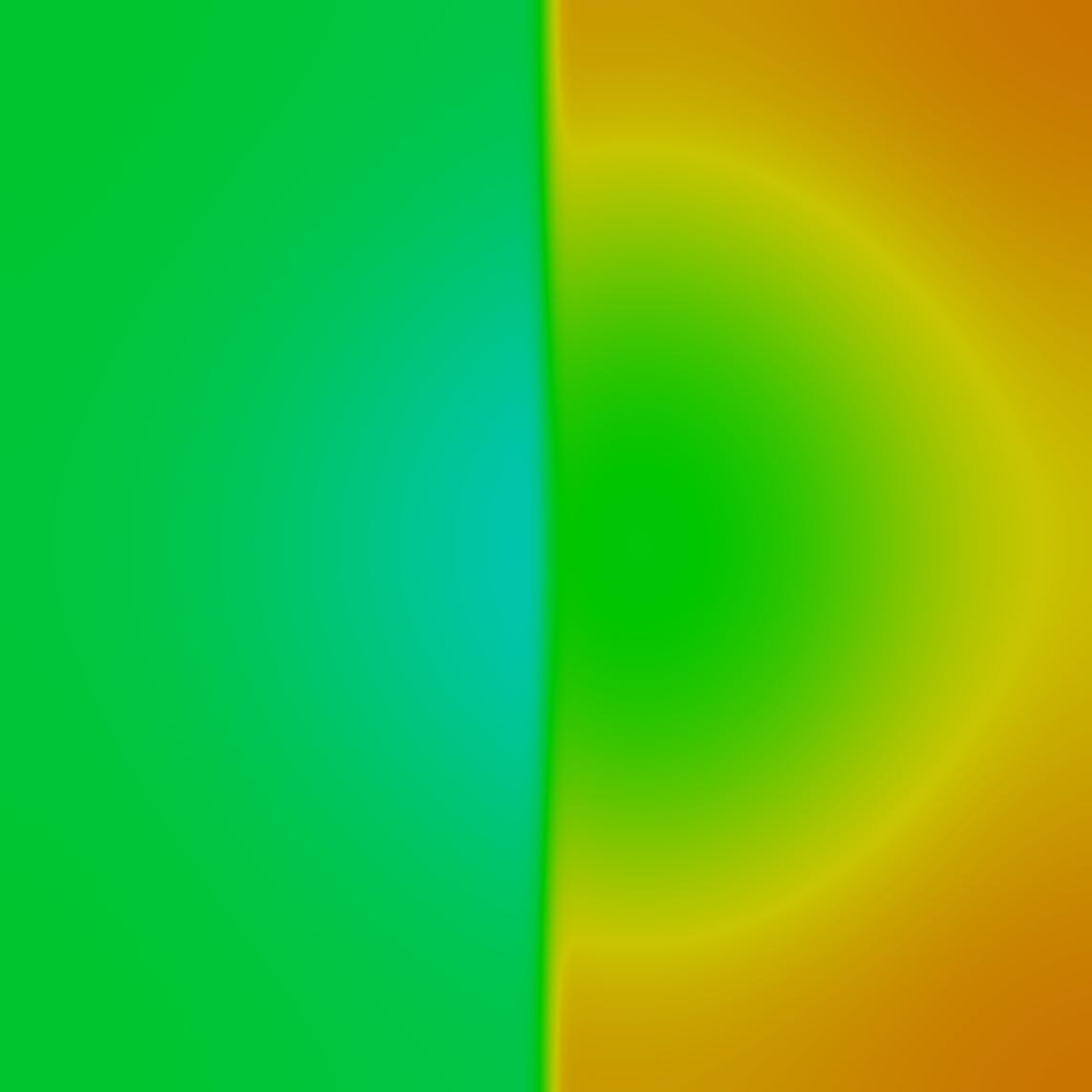}};
\draw (.1, 1.9) node {$t=15$};
\end{tikzpicture} \\[0.1cm]
\begin{tikzpicture}
\draw (0, 0) node[inner sep=0] {\includegraphics[width=.225\textwidth]{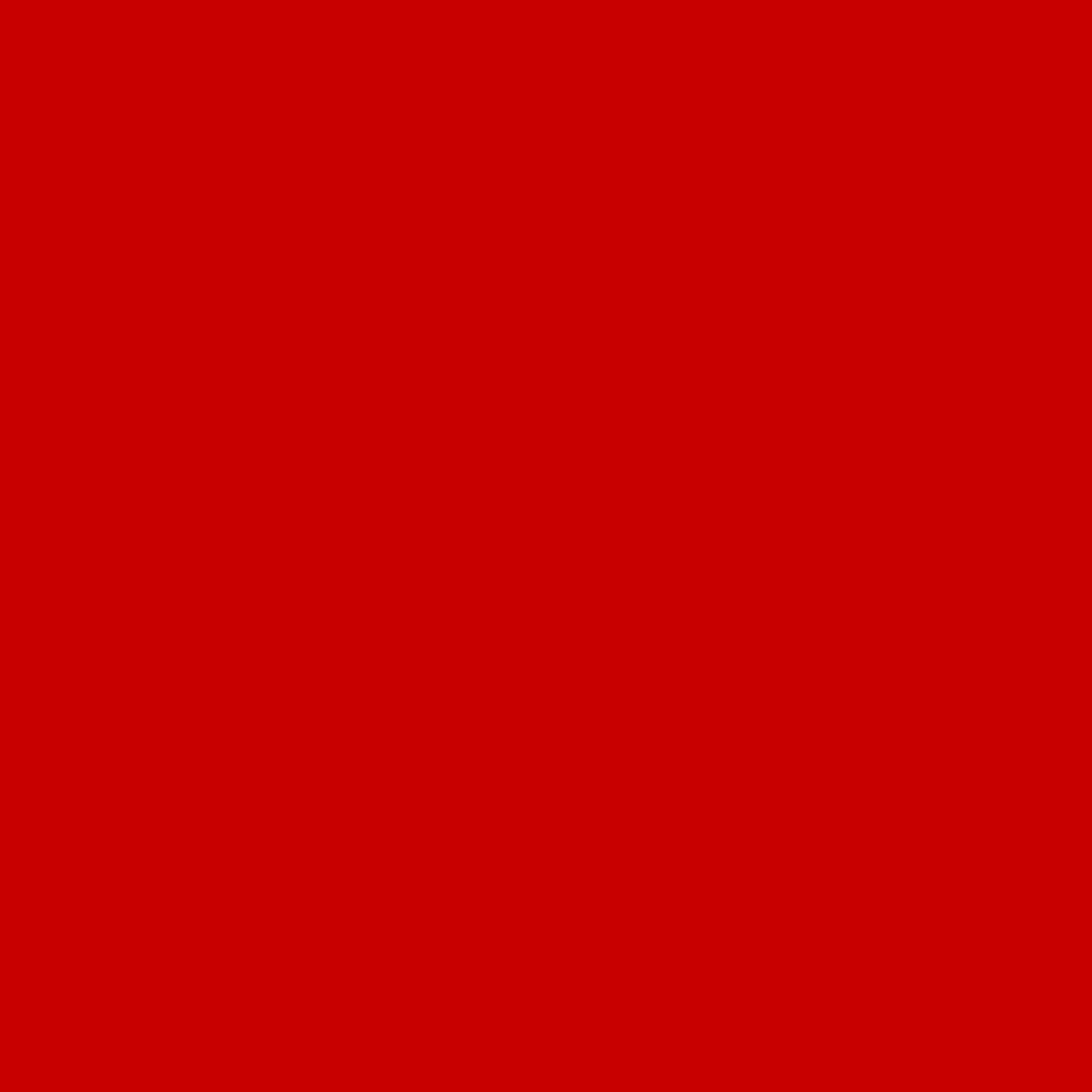}};
\draw (-2,0)  node {$\phi_\sigma$};
\end{tikzpicture}
	\includegraphics[width=.225\textwidth]{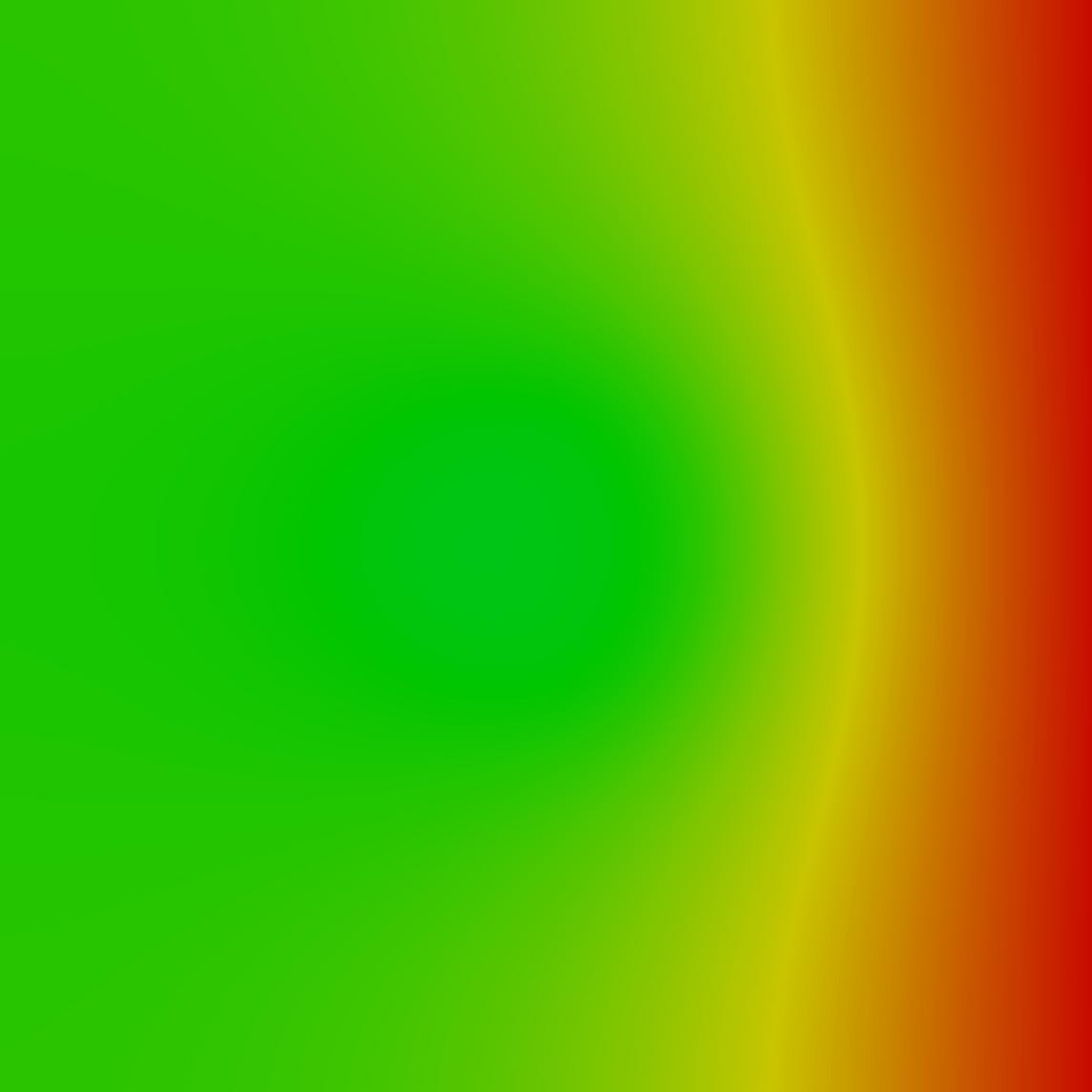}
	\includegraphics[width=.225\textwidth]{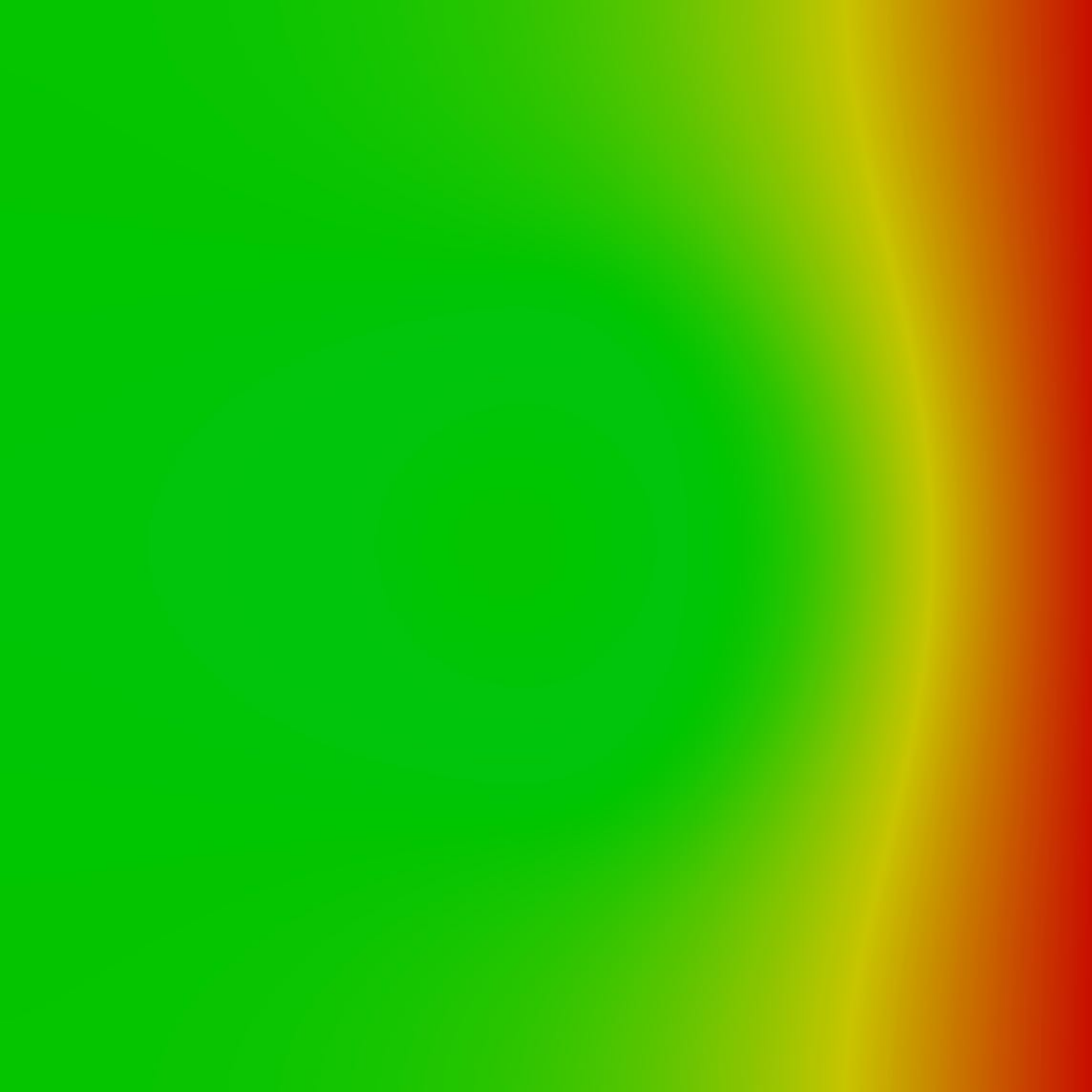}
	\includegraphics[width=.225\textwidth]{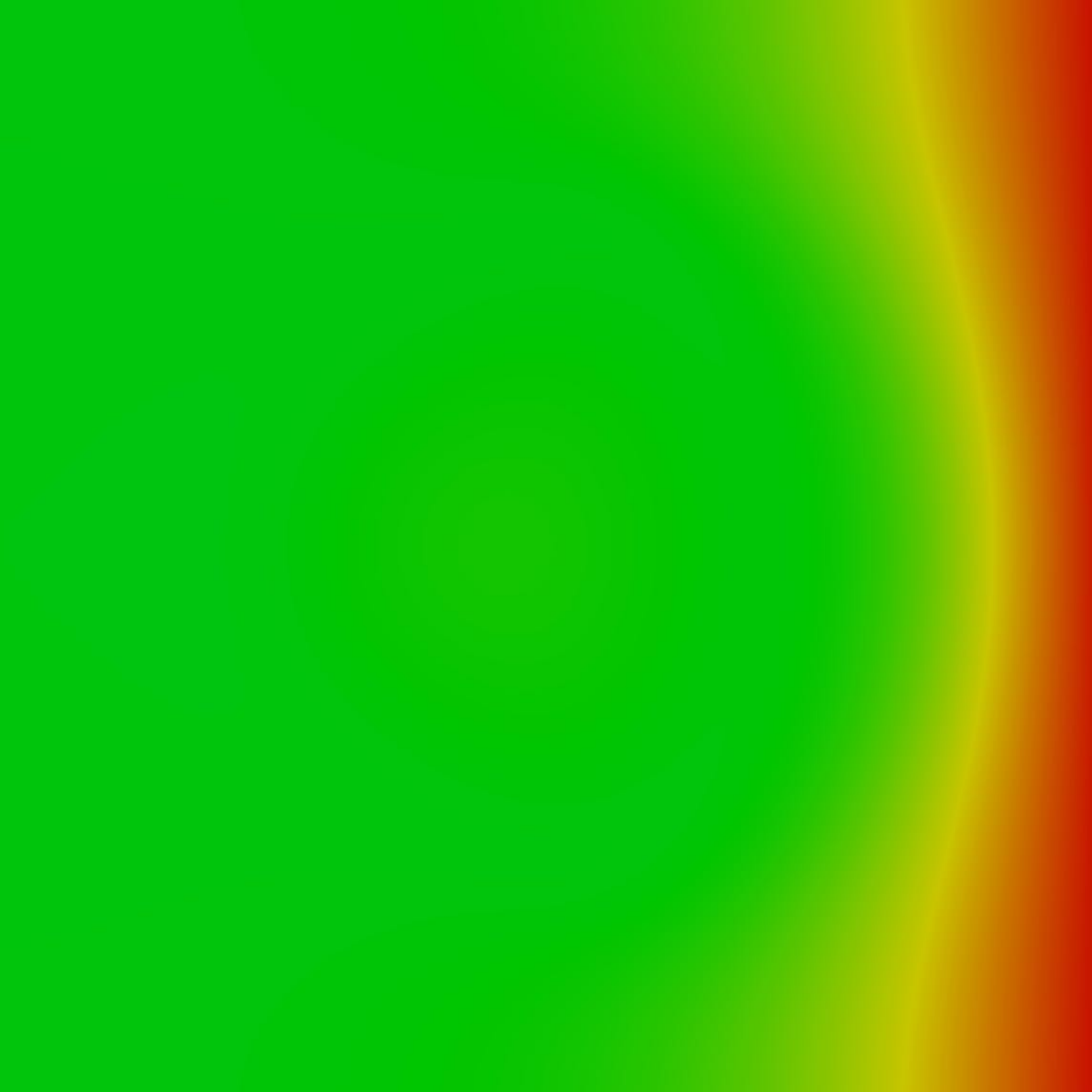}\\
	\hspace{.59cm} \begin{tikzpicture}
	\begin{axis}[
	hide axis,
	scale only axis,
	height=0pt,
	width=0pt,
	colormap name=rainbow,
	colorbar horizontal,
	point meta min=0,
	point meta max=1,
	colorbar style={
		samples=100,
		height=.5cm,
		xtick={0,0.5,1},
		width=10cm
	},
	]
	\addplot [draw=none] coordinates {(0,0)};
	\end{axis}
	\end{tikzpicture} 
\vspace{-0.1cm}
	\caption{Simulation of the ECM density $\theta$ and the nutrient concentration $\phi_\sigma$ in the local model in two dimensions; their evolution is shown at the times $t\in \{0,5,10,15\}$}
	\label{FigureLocalNut2D}
\end{figure}

\subsection{Local model in three dimensions}

The simulation of the extracellular matrix density in the three dimensional domain $\Omega=(-1,1)^3$ at the times $t \in \{8,11\}$ is illustrated in Figure \ref{FigureLocalECM3D}. Additionally, an isosurface of the tumor volume fraction $\phi_T$ at $0.7$ is shown in the same plots.

At time $t=0$, the top part of the domain has a higher ECM density $\theta=1$ than the lower part with $\theta=0.5$, similarly to the initial data in the two-dimensional case, see Figure \ref{FigureLocalNut2D}. At $t=8$ and $t=11$, one observes in Figure \ref{FigureLocalECM3D} that the ECM density has degraded around the tumor volume, similar to the two-dimensional case.

\begin{figure}[H]
\centering
	\includegraphics[trim={0cm 0cm 0cm 4cm}, clip,width=.46\textwidth]{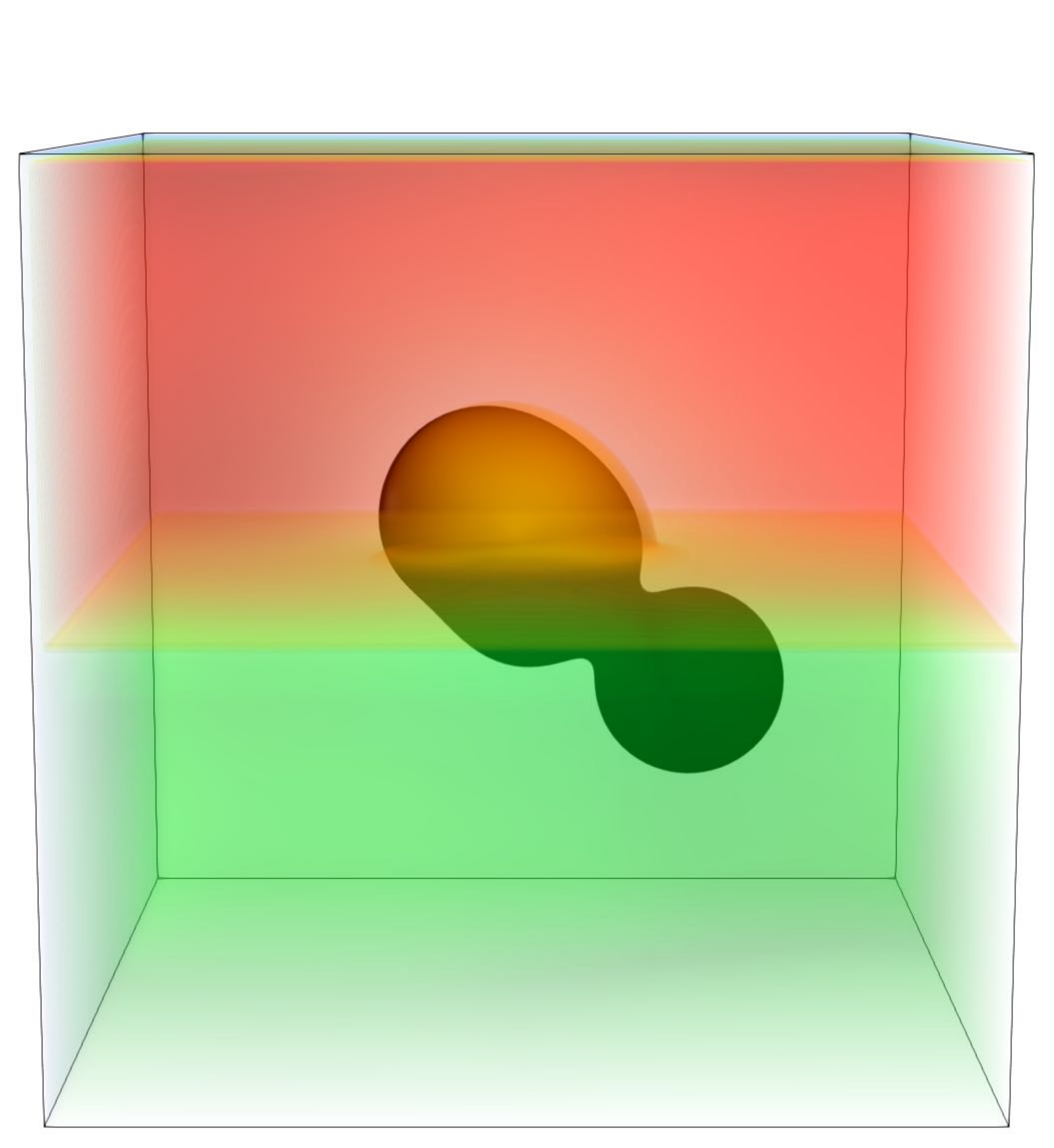}
    \includegraphics[trim={0cm 0cm 0cm 4cm}, clip,width=.46\textwidth]{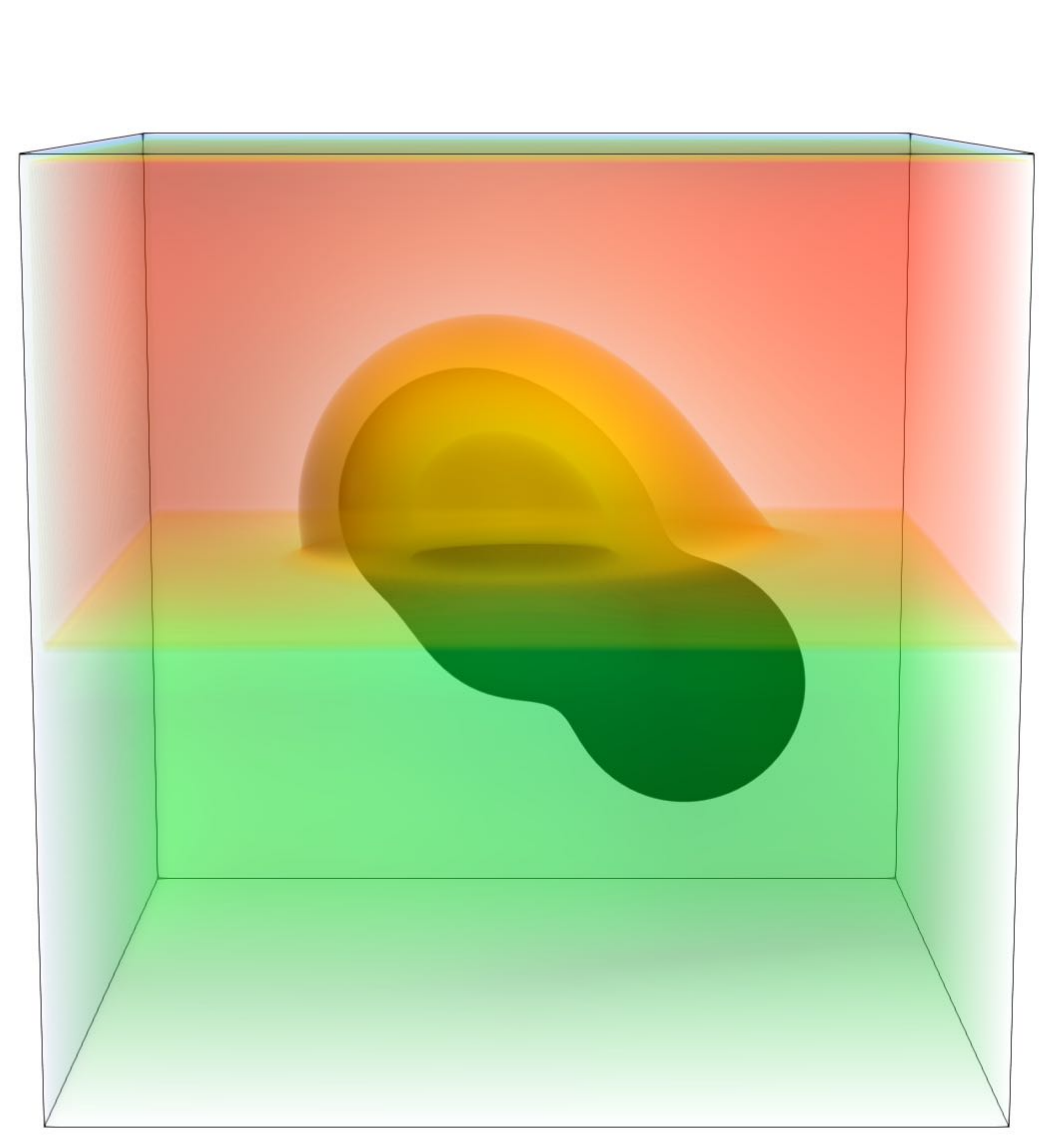}\vspace{-0.15cm}
	\begin{tikzpicture}
	\begin{axis}[
	hide axis,
	scale only axis,
	height=0pt,
	width=0pt,
	colormap name=rainbow,
	colorbar horizontal,
	point meta min=0,
	point meta max=1,
	colorbar style={
		samples=100,
		height=.5cm,
		xtick={0,0.5,1},
		width=10cm
	},
	]
	\addplot [draw=none] coordinates {(0,0)};
	\end{axis}
	\end{tikzpicture} \vspace{-0.1cm}
	\caption{Simulation of the extracellular matrix density $\theta$ in the three-dimensional domain $\Omega=(-1,1)^3$ together with the isosurface of the tumor volume fraction $\phi_T$ at $0.7$, at the times $t=8$ (left plot) and $t=11$ (right plot)}
		\label{FigureLocalECM3D}
\end{figure}

The evolution of the volume fractions of tumor cells $\phi_T$ and necrotic cell $\phi_N$ in the three dimensional case is depicted in Figure \ref{FigureLocalTum3D} below. As initial data we take two separated elliptic-shaped tumor volume fractions, which start to connect at $t=5$. At the initial time there are no necrotic cells. They begin to form at $t=6.5$ and already inhabit a large portion of the tumor volume fraction at $t=11$, as seen in Figure \ref{FigureLocalTum3D}.

		\begin{figure}[H]
	\centering
\begin{tikzpicture}
\draw (0, 0) node[inner sep=0] {{\includegraphics[clip, trim=4cm 3cm 3cm 3cm,width=.46\textwidth]{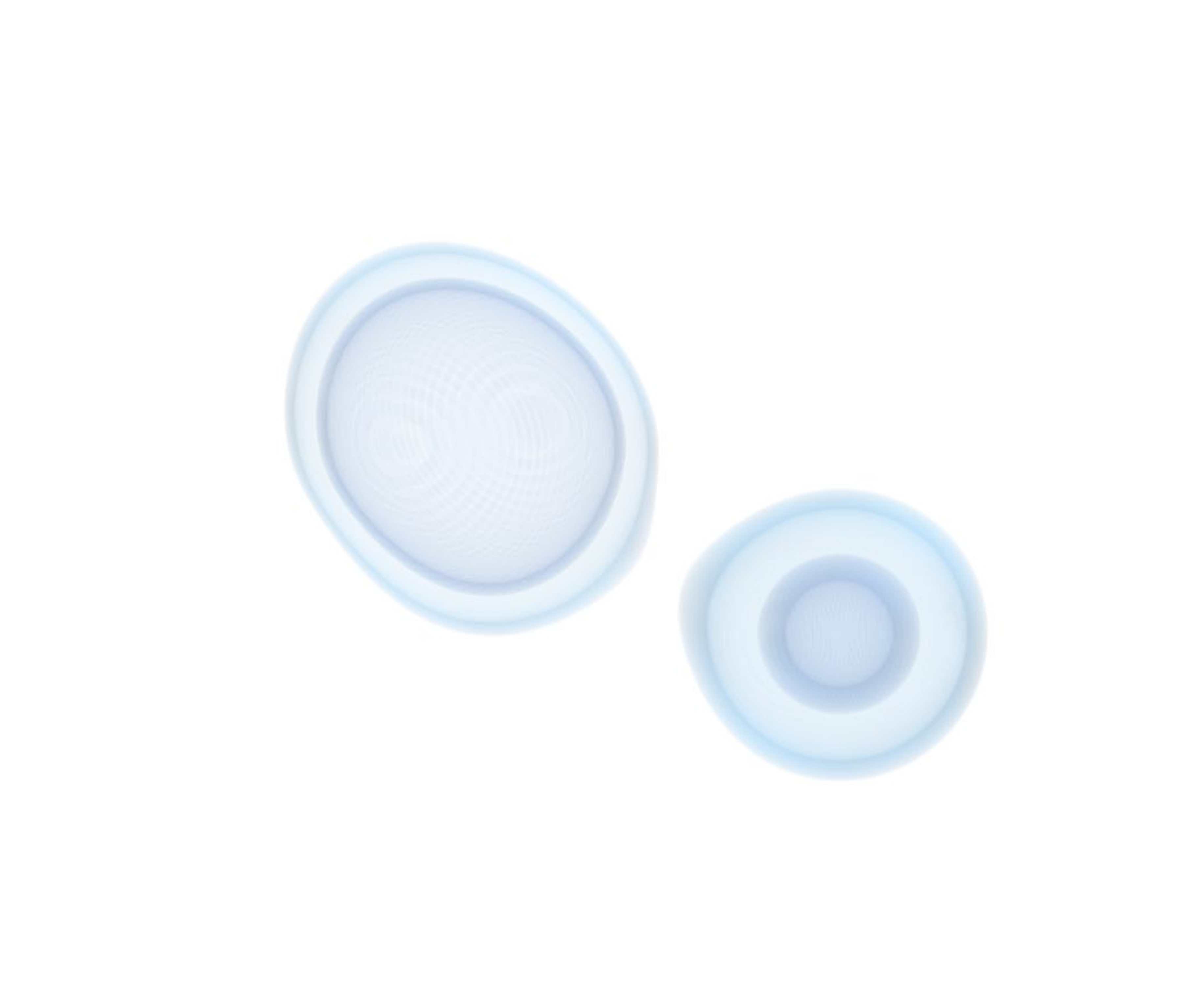}}};
\draw[line width=.7pt, ->]  (xyz cs:x=-3.1,y=-2.5,z=0) -- (xyz cs:x=-2.1,y=-2.5,z=0) node[pos=1.25] {$x_2$};
\draw[line width=.7pt,->] (xyz cs:x=-3.1, y=-2.5, z=0) -- (xyz cs:x=-3.1, y=-1.5, z=0) node[pos=1.15] {$x_1$};
\draw[line width=.7pt,->] (xyz cs:x=-3.1,y=-2.5,z=0) -- (xyz cs:x=-3.1,y=-2.5,z=-1) node[pos=1.35] {$x_3$};
\end{tikzpicture}
\begin{tikzpicture}
\draw (0, 0) node[inner sep=0] {{\includegraphics[clip, trim=4cm 3cm 3cm 3cm,width=.46\textwidth]{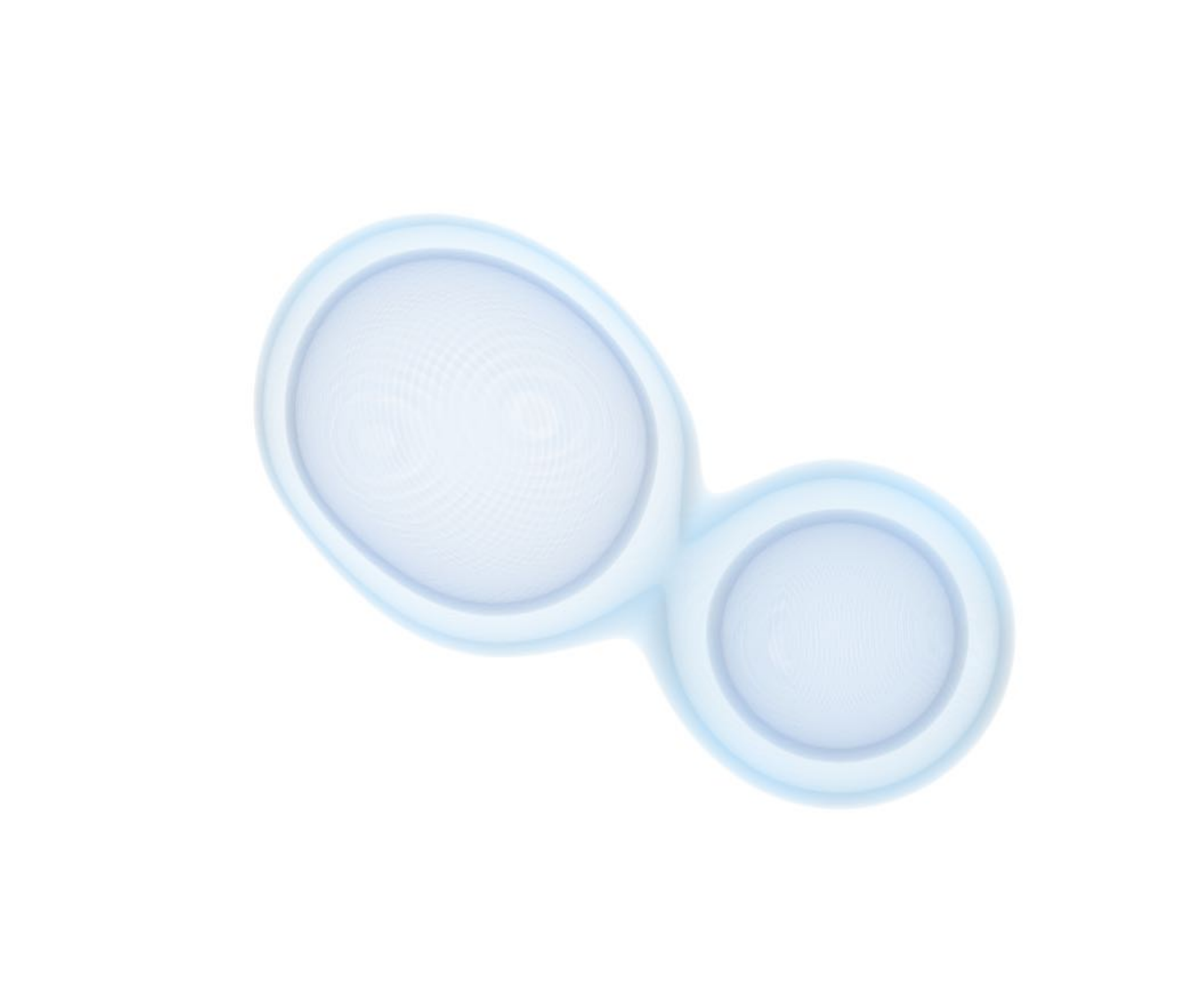}}};
\draw[line width=.7pt, ->]  (xyz cs:x=-3.1,y=-2.5,z=0) -- (xyz cs:x=-2.1,y=-2.5,z=0) node[pos=1.25] {$x_2$};
\draw[line width=.7pt,->] (xyz cs:x=-3.1, y=-2.5, z=0) -- (xyz cs:x=-3.1, y=-1.5, z=0) node[pos=1.15] {$x_1$};
\draw[line width=.7pt,->] (xyz cs:x=-3.1,y=-2.5,z=0) -- (xyz cs:x=-3.1,y=-2.5,z=-1) node[pos=1.35] {$x_3$};
\end{tikzpicture}  \\[-0.1cm]
\begin{tikzpicture}
\draw (0, 0) node[inner sep=0] {{\includegraphics[clip, trim=4cm 3cm 3cm 3cm,width=.46\textwidth]{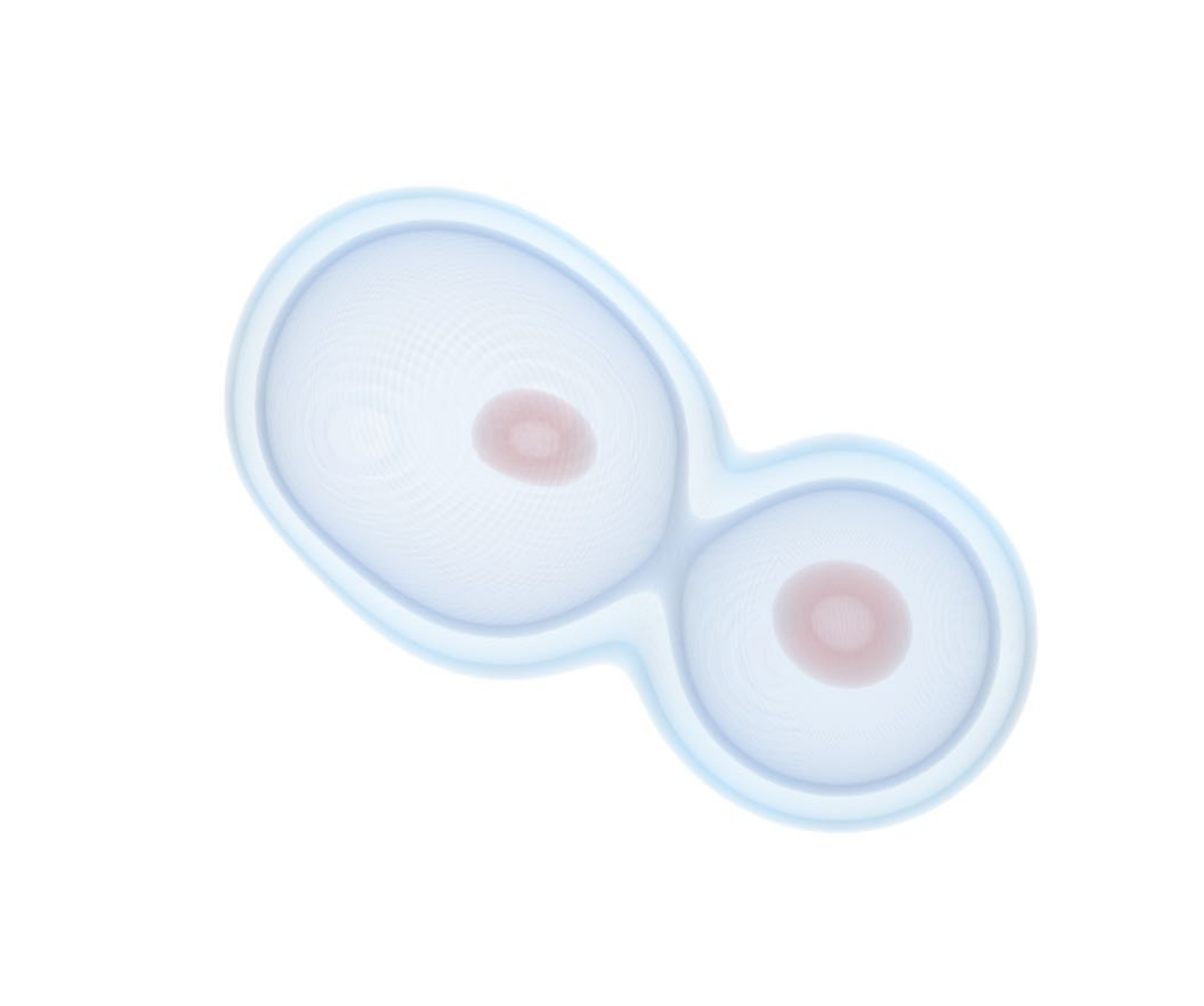}}};
\draw[line width=.7pt, ->]  (xyz cs:x=-3.1,y=-2.5,z=0) -- (xyz cs:x=-2.1,y=-2.5,z=0) node[pos=1.25] {$x_2$};
\draw[line width=.7pt,->] (xyz cs:x=-3.1, y=-2.5, z=0) -- (xyz cs:x=-3.1, y=-1.5, z=0) node[pos=1.15] {$x_1$};
\draw[line width=.7pt,->] (xyz cs:x=-3.1,y=-2.5,z=0) -- (xyz cs:x=-3.1,y=-2.5,z=-1) node[pos=1.35] {$x_3$};
\end{tikzpicture}
\begin{tikzpicture}
\draw (0, 0) node[inner sep=0] {{\includegraphics[clip, trim=4cm 3cm 3cm 3cm,width=.46\textwidth]{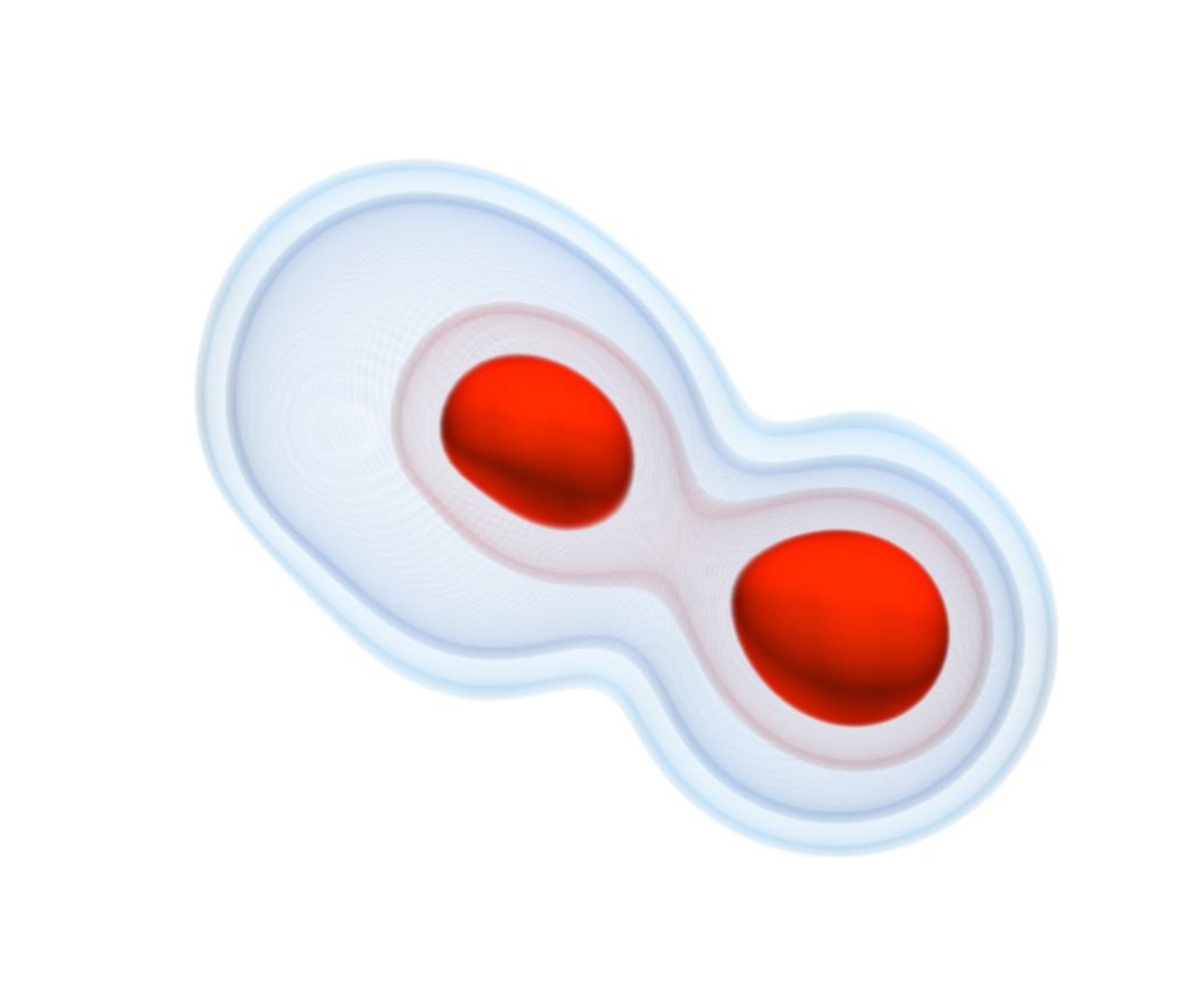}}};
\draw[line width=.7pt, ->]  (xyz cs:x=-3.1,y=-2.5,z=0) -- (xyz cs:x=-2.1,y=-2.5,z=0) node[pos=1.25] {$x_2$};
\draw[line width=.7pt,->] (xyz cs:x=-3.1, y=-2.5, z=0) -- (xyz cs:x=-3.1, y=-1.5, z=0) node[pos=1.15] {$x_1$};
\draw[line width=.7pt,->] (xyz cs:x=-3.1,y=-2.5,z=0) -- (xyz cs:x=-3.1,y=-2.5,z=-1) node[pos=1.35] {$x_3$};
\end{tikzpicture}  \\[-0.1cm]
\begin{tikzpicture}
\draw (0, 0) node[inner sep=0] {{\includegraphics[clip, trim=4cm 3cm 3cm 3cm,width=.46\textwidth]{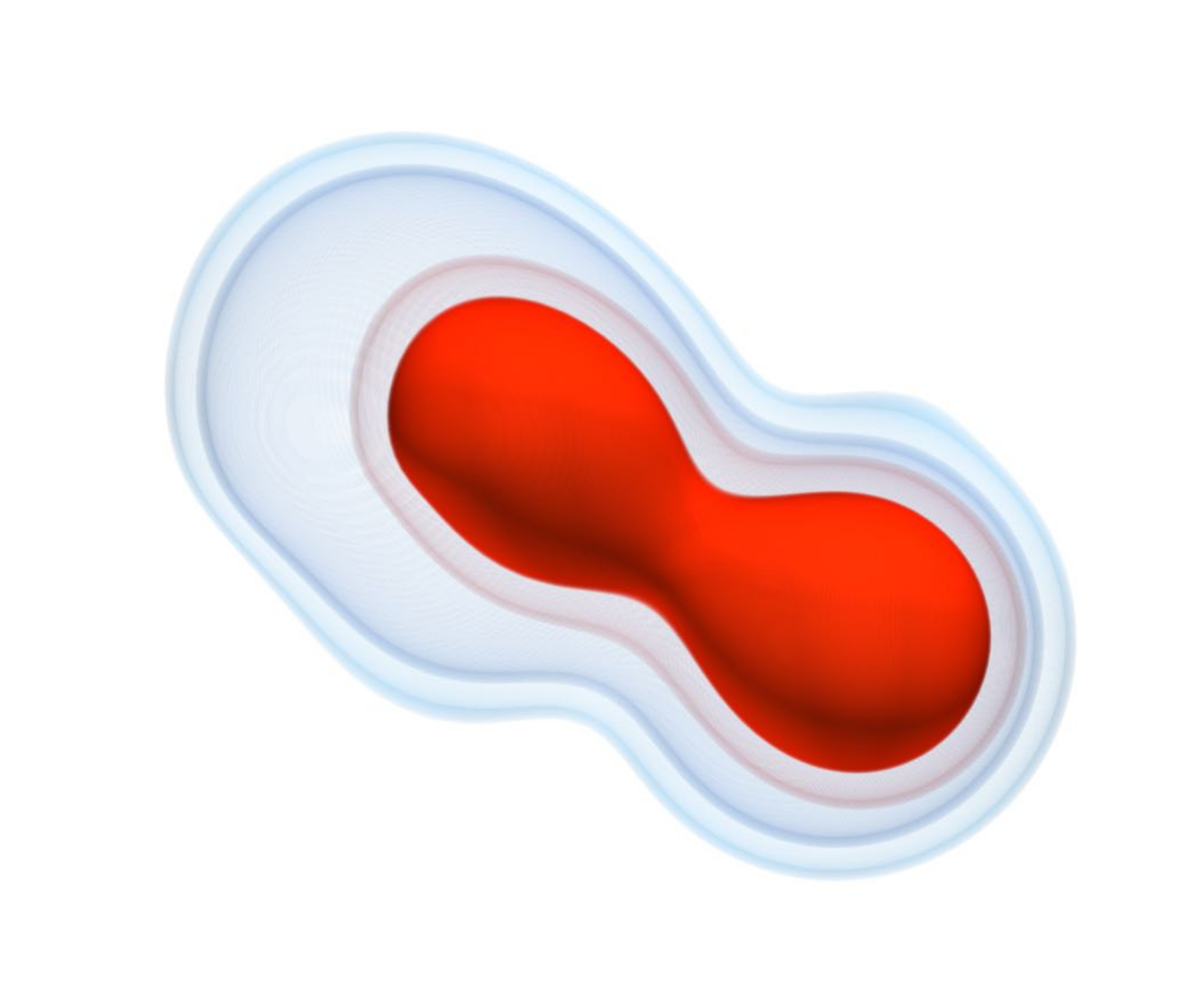}}};
\draw[line width=.7pt, ->]  (xyz cs:x=-3.1,y=-2.5,z=0) -- (xyz cs:x=-2.1,y=-2.5,z=0) node[pos=1.25] {$x_2$};
\draw[line width=.7pt,->] (xyz cs:x=-3.1, y=-2.5, z=0) -- (xyz cs:x=-3.1, y=-1.5, z=0) node[pos=1.15] {$x_1$};
\draw[line width=.7pt,->] (xyz cs:x=-3.1,y=-2.5,z=0) -- (xyz cs:x=-3.1,y=-2.5,z=-1) node[pos=1.35] {$x_3$};
\end{tikzpicture}
\begin{tikzpicture}
\draw (0, 0) node[inner sep=0] {{\includegraphics[clip, trim=4cm 3cm 3cm 3cm,width=.46\textwidth]{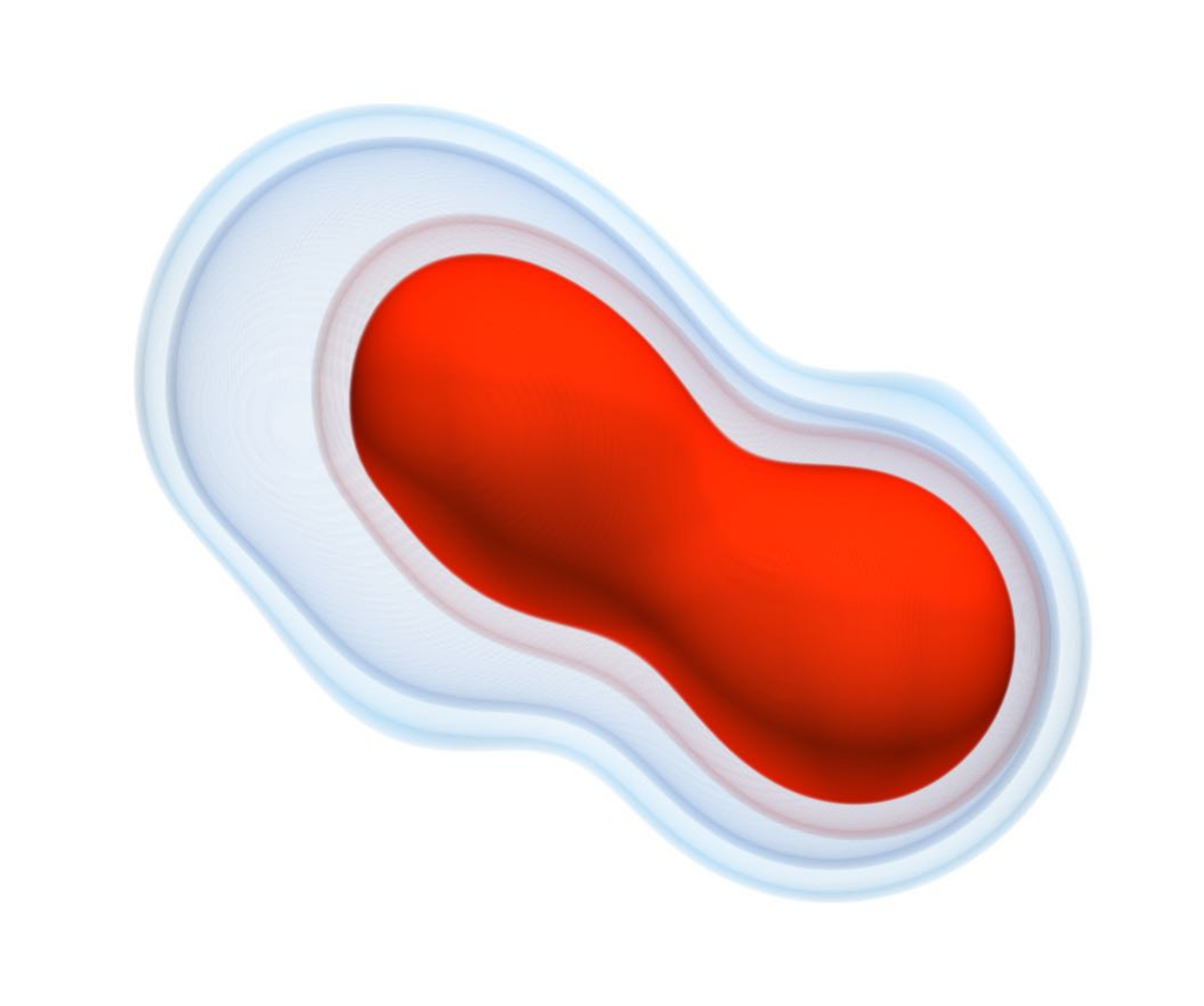}}};
\draw[line width=.7pt, ->]  (xyz cs:x=-3.1,y=-2.5,z=0) -- (xyz cs:x=-2.1,y=-2.5,z=0) node[pos=1.25] {$x_2$};
\draw[line width=.7pt,->] (xyz cs:x=-3.1, y=-2.5, z=0) -- (xyz cs:x=-3.1, y=-1.5, z=0) node[pos=1.15] {$x_1$};
\draw[line width=.7pt,->] (xyz cs:x=-3.1,y=-2.5,z=0) -- (xyz cs:x=-3.1,y=-2.5,z=-1) node[pos=1.35] {$x_3$};
\end{tikzpicture}  %\\[-0.1cm]
	\begin{tikzpicture}
	\begin{axis}[
	hide axis,
	scale only axis,
	height=0pt,
	width=0pt,
	colormap name=blue,
	colorbar horizontal,
	point meta min=0,
	point meta max=1,
	colorbar style={
	    ylabel=$\phi_T$,
	    y label style={rotate=270},
		samples=100,
		height=.5cm,
		xtick={0,0.5,1},
		width=6cm
	},
	]
	\addplot [draw=none] coordinates {(0,0)};
	\end{axis}
	\end{tikzpicture}
		\begin{tikzpicture}
	\begin{axis}[
	hide axis,
	scale only axis,
	height=0pt,
	width=0pt,
	colormap name=red,
	colorbar horizontal,
	point meta min=0,
	point meta max=1,
	colorbar style={
	    ylabel=$\phi_N$,
	    y label style={rotate=270},
		samples=100,
		height=.5cm,
		xtick={0,0.5,1},
		width=6cm
	},
	]
	\addplot [draw=none] coordinates {(0,0)};
	\end{axis}
	\end{tikzpicture}% \vspace{-0.2cm}
	\caption{Simulation of volume fractions of tumor cells $\phi_T$ and necrotic cells $\phi_N$ in a 3D domain, isosurfaces of $0.2$ and $0.4$ of each volume fraction at times $t\in \{3.5, 5, 6.5, 8, 9.5, 11\}$ are shown}
		\label{FigureLocalTum3D}
\end{figure}

\newpage

\subsection{Comparison to the nonlocal model}
In this section, we compare the simulation of the tumor volume fraction $\phi_T$ in the local and nonlocal model (\ref{mod_problem_loc}), that means in the local model we choose for the adhesion flux $J_\text{loc}=\chi_H \phi_V \nabla \theta$ and for the nonlocal model $J_\text{nonloc}=\chi_H \phi_V k*\theta$, as introduced in (\ref{adhesion_flux}). In the case of the nonlocal adhesion-based haptotaxis effect, we have to select an appropriate vector-valued kernel function $k$. In the existence proof of the nonlocal model we only had to assume $k \in L^1(\mathbb{R}^d)$ and no additional requirements on its representation. Following \cite{gerisch2008mathematical,gerisch2010approximation,chaplain2011mathematical}, we choose a kernel function $k_\eps$, $\eps>0$ indicating some parameter, such that it approximates the gradient-based haptotaxis effect as $\eps \to 0$. See also \cite{du2013nonlocal,mengesha2015localization} for different choices for nonlocal gradient operators.

In tumor growth models involving nonlocal cell-to-cell adhesion effects, it is a standard procedure to replace the term $\frac12 \eps_T^2 |\nabla \phi_T(x)|^2$ in the Ginzburg--Landau free energy functional (\ref{Ginzburg}) by 
\begin{equation} \frac14 \int_\Omega J(x-y)(\phi_T(x)-\phi_T(y))^2\, \dd y. \label{Eq_NonlocalEnergy} \end{equation}
As shown in \cite{frigeri2015a}, choosing $J(x-y)=j^{d+2} \chi_{[0,1]}(|j(x-y)|^2)$ and letting $j \to \infty$, one returns to the Ginzburg--Landau free energy functional, where the interfacial parameter is expressed by $\eps_T^2=\frac{2}{d} \int_{\mathbb{R}^d} J(|z|^2)|z|^2 \dd z$. Therefore, one can interpret the classical Cahn--Hilliard equation as an approximation of its nonlocal version. 

Taking the Gateaux derivative of the nonlocal energy functional results in the chemical potential $\mu$. In particular, the term in (\ref{Eq_NonlocalEnergy}) becomes $$\phi_T\cdot J*1 - J*\phi_T$$ instead of $-\eps_T^2 \Delta \phi_T$ in the case of the local Ginzburg--Landau free energy functional. This suggests for the gradient operator the following approximation:
$$\begin{aligned} k \circledast \theta (x) \, &\!\!:= (k*\theta)(x) - \theta(x) \cdot (k*1)(x) \\ &= \int_{\mathbb{R}^d} k(x-y) (\theta(y)-\theta(x)) \dd y
\\&\approx \int_{\mathbb{R}^d} k(x-y) (\nabla \theta(x) \cdot (y-x)  ) \dd y
\\&=\nabla \theta(x) \int_{\mathbb{R}^d} (y-x) \cdot k(x-y) \dd y  
\\&= \nabla \theta(x),
\end{aligned}$$
where we chose $k$ such that $x k(-x)$ is a Dirac sequence with the typical property $\int_{\mathbb{R}^d} x k(-x) \, \dd x=1$. We impose the representation 
\begin{equation} \label{Eq_Kernel}
k(x)=- \omega(\eps) x \chi_{[0,\eps]} (|x|_\infty),
\end{equation} which gives in the two-dimensional case
$$\int_{\mathbb{R}^2} x k(-x)\, \dd x = \omega(\eps) \int_{-\eps}^{\eps} \int_{-\eps}^\eps (x_1^2+x_2^2) \, \dd x_1 \dd x_2 = \omega(\eps) \frac{8}{3} \eps^4,$$
and defining $\omega(\eps)=\frac{3}{8} \eps^{-4}$ yields the desired normalization property.

Note that $k(x)=-\omega(\eps) x \chi_{[0,\eps]}(|x|_\infty)$ is an odd function and therefore,
$$(k*1)(x)=\int_{\mathbb{R}^d} k(x-y) \dd y =0,$$
and we can write $k \circledast \theta=k * \theta$.

In the following, we numerically investigate the effects of the different haptotaxis parameters $\chi_H$ on the growth of the tumor volume fraction. We distinguish between three different values for $\chi_H$, $\chi_H \in \{5 \cdot 10^{-4}, 10^{-3}, 2 \cdot 10^{-3}\}$. We can observe in Figure \ref{FigureLocalNonlocal} that a lower haptotaxis parameter results in a more circular shape than for a higher $\chi_H$, e.g., for $\chi_H=10^{-3}$, we see that the tumor shape forms a bump at the vertical axis. 
Moreover, we compare the local gradient-based ($\eps=0$) and the nonlocal adhesion-based haptotaxis effect, for which we select $\eps \in \{2.75\cdot 10^{-2}, 5.25 \cdot 10^{-2}\}$ in the definition of the kernel function (\ref{Eq_Kernel}).

\pagebreak

The larger $\eps$, the less sensitive the results are on the three considered values for $\chi_H$. However as we can see in the last column in Figure \ref{FigureLocalNonlocal}, different $(\eps,\chi_H)$ pairings can also yield quite similar results. A larger $\eps$ requires a larger $\chi_H$ to show similar effects as a pairing with smaller $\eps$ and $\chi_H$ values.

The larger $\chi_H$, the more the local and nonlocal model differ from each other. This results from the fact that for $\eps>0$ in the nonlocal model terms involving $\eps^2$ play a more significant role.

	\begin{figure}[H]
	\centering
	\begin{tikzpicture}
\draw (0, 0) node[inner sep=0] {\includegraphics[width=.2\textwidth]{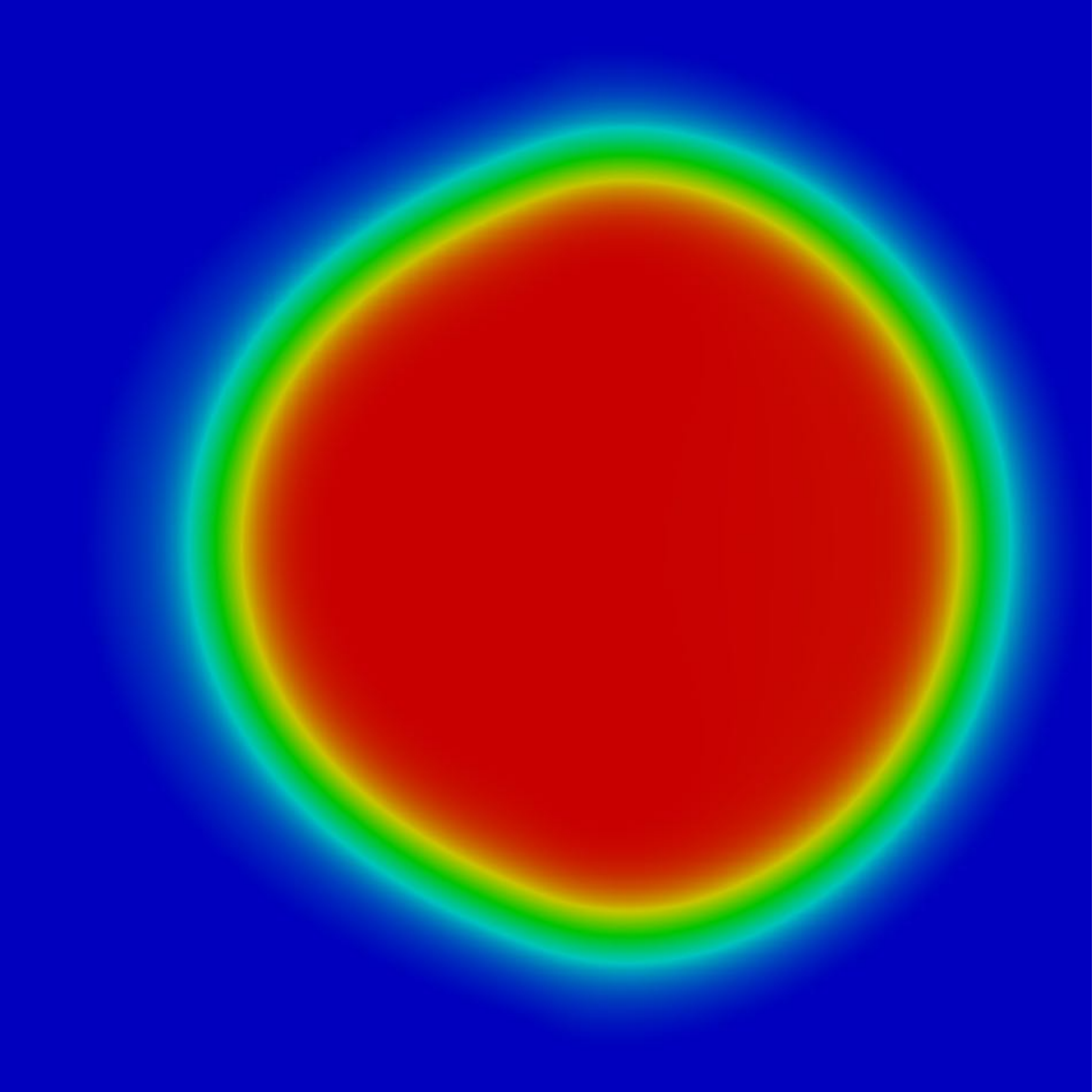}};
\draw (0, 1.8) node {$\chi_{H}=0.0005$};
\draw (-1.8,0)  node {$\varepsilon_1$};
\end{tikzpicture}
\begin{tikzpicture}
\draw (0, 0) node[inner sep=0] {\includegraphics[width=.2\textwidth]{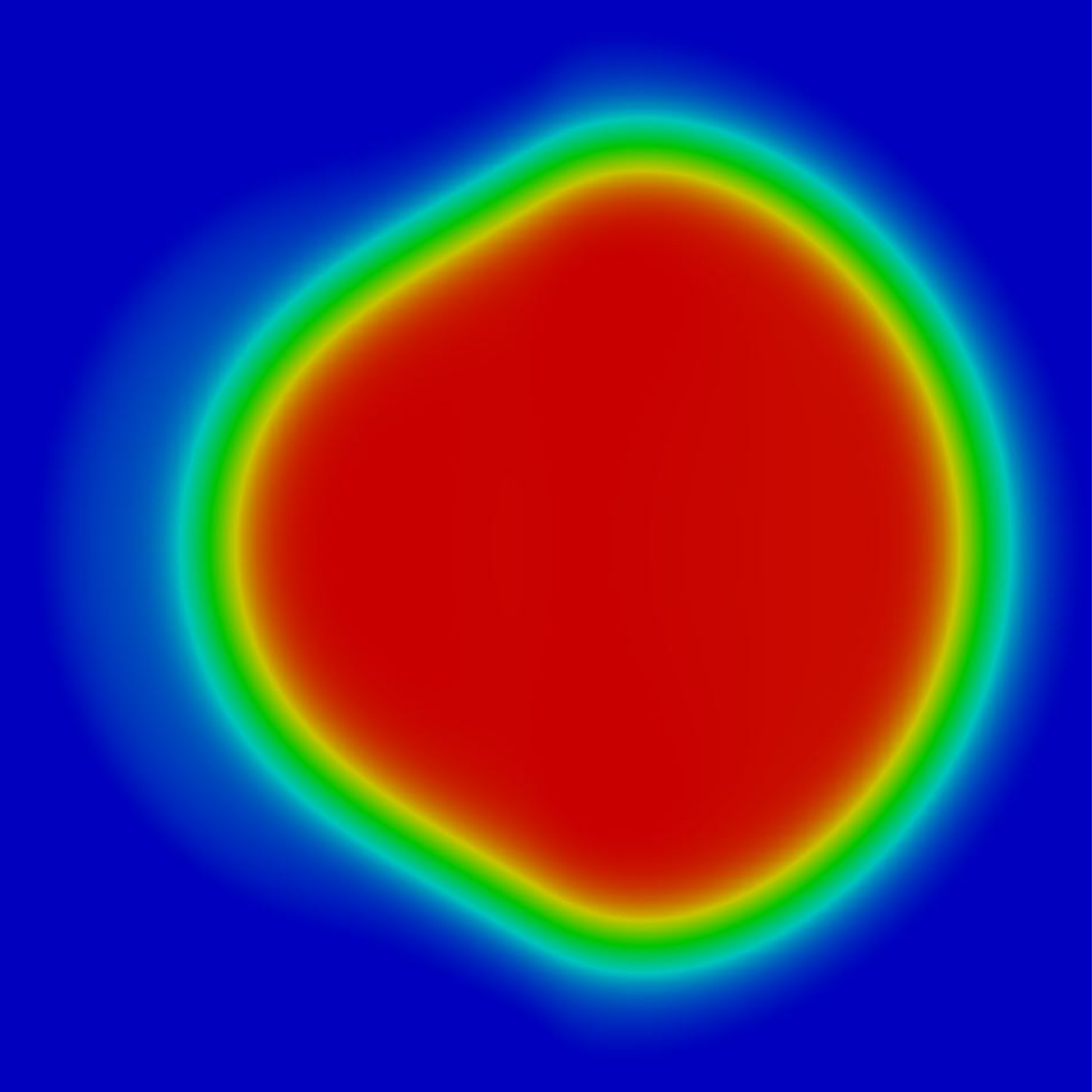}};
\draw (0, 1.8) node {$\chi_{H}=0.001$};
\end{tikzpicture} 
\begin{tikzpicture}
\draw (0, 0) node[inner sep=0] {\includegraphics[width=.2\textwidth]{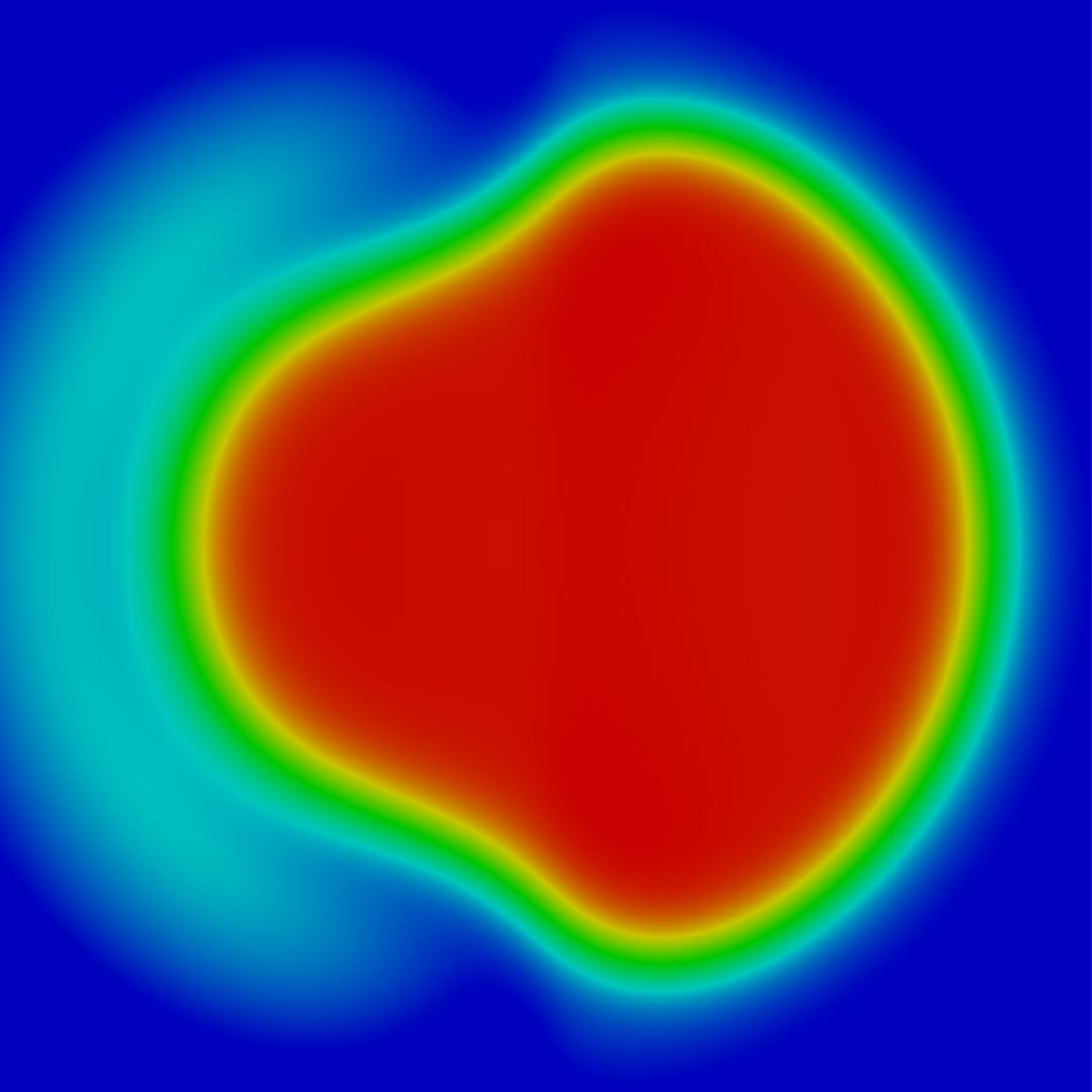}};
\draw (0, 1.8) node {$\chi_{H}=0.002$};
\end{tikzpicture} \qquad
\begin{tikzpicture}
\draw (0, 0) node[inner sep=0] {\includegraphics[width=.2\textwidth]{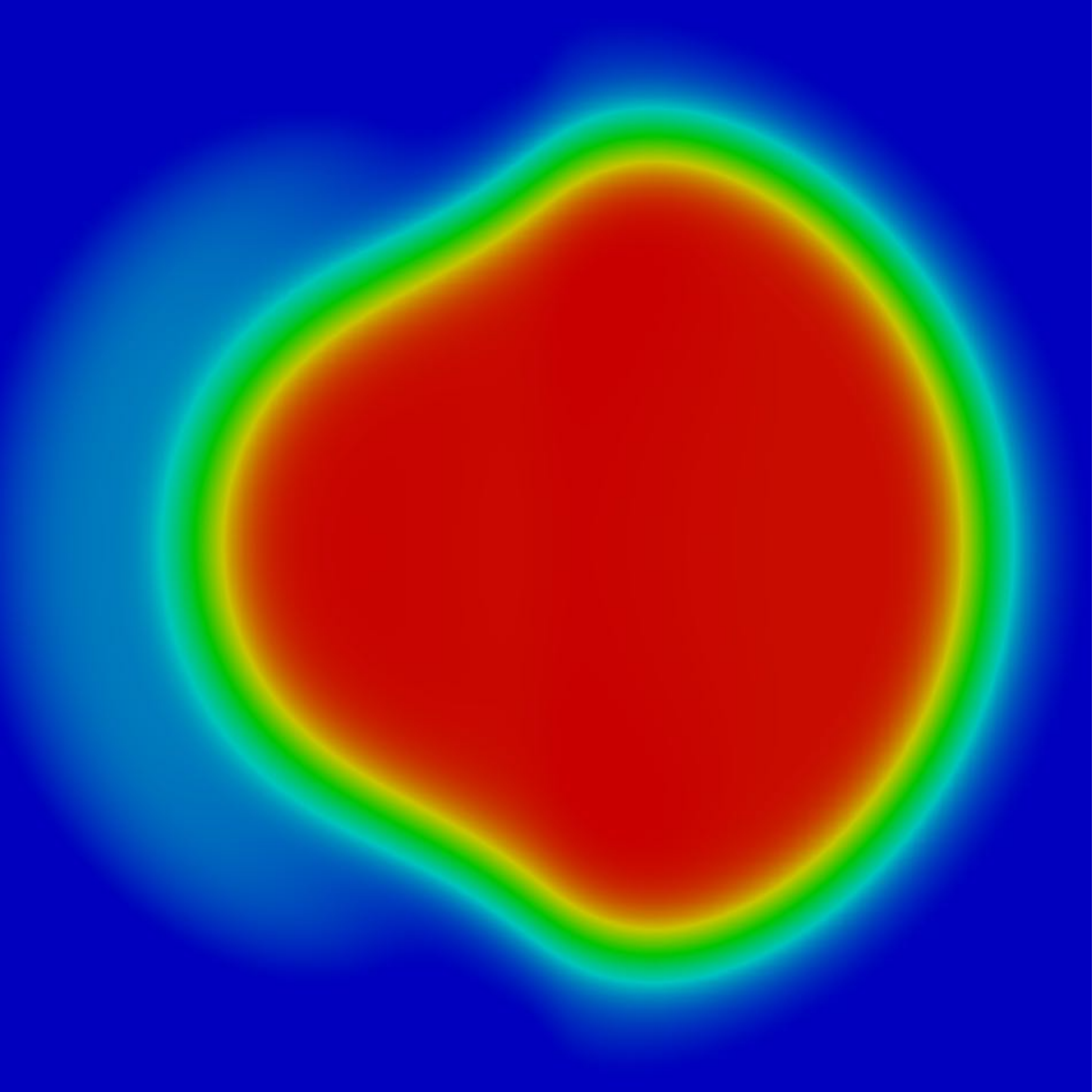}};
\draw (-.9, 1.3) node {$\textbf{\textcolor{white}{0.0015}}$};
\draw (0, 1.8) node {different $\chi_{H}$};
\end{tikzpicture} \\[0.1cm]
\begin{tikzpicture}
\draw (0, 0) node[inner sep=0] {\includegraphics[width=.2\textwidth]{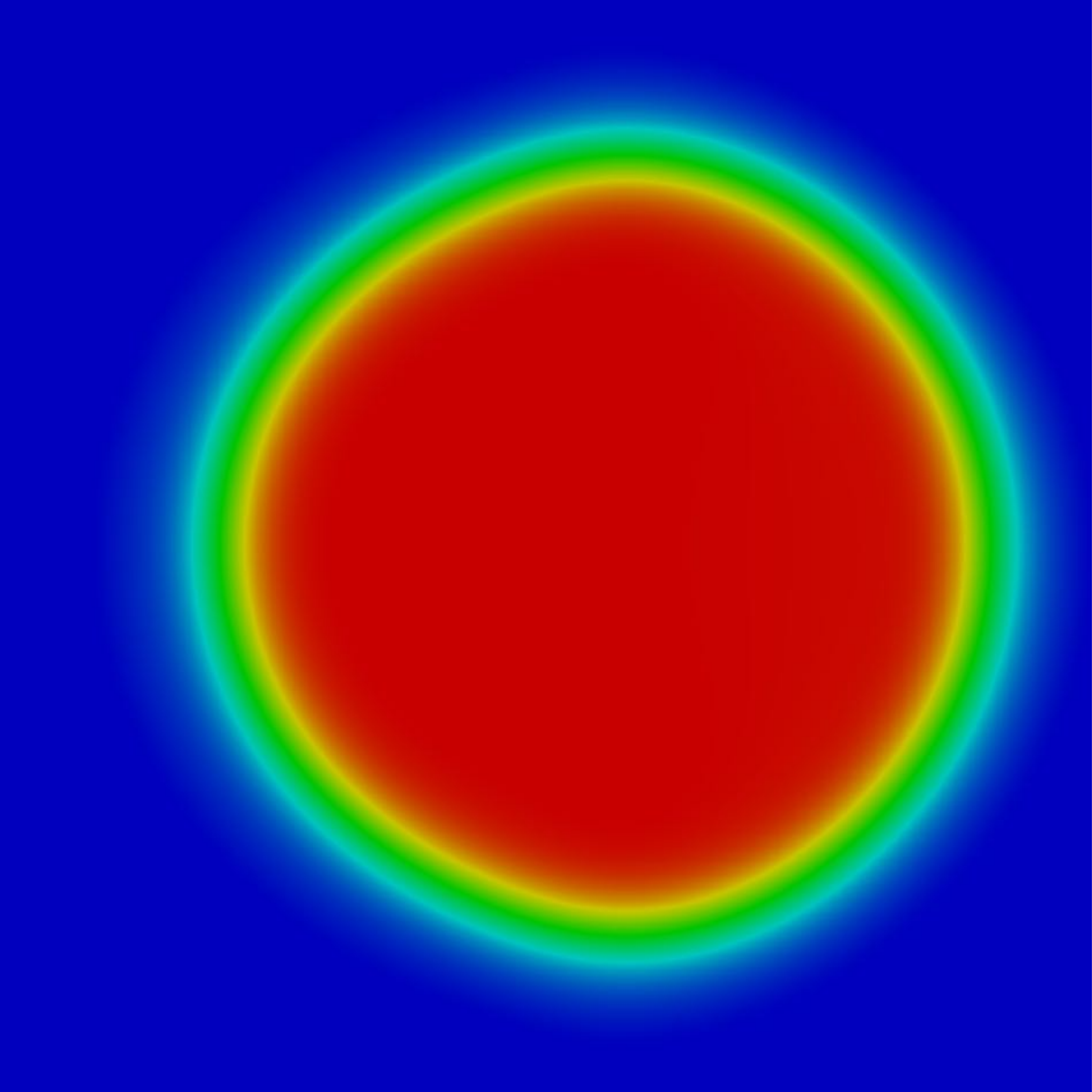}};
\draw (-1.8,0)  node {$\varepsilon_2$};
\end{tikzpicture}
	\includegraphics[width=.2\textwidth]{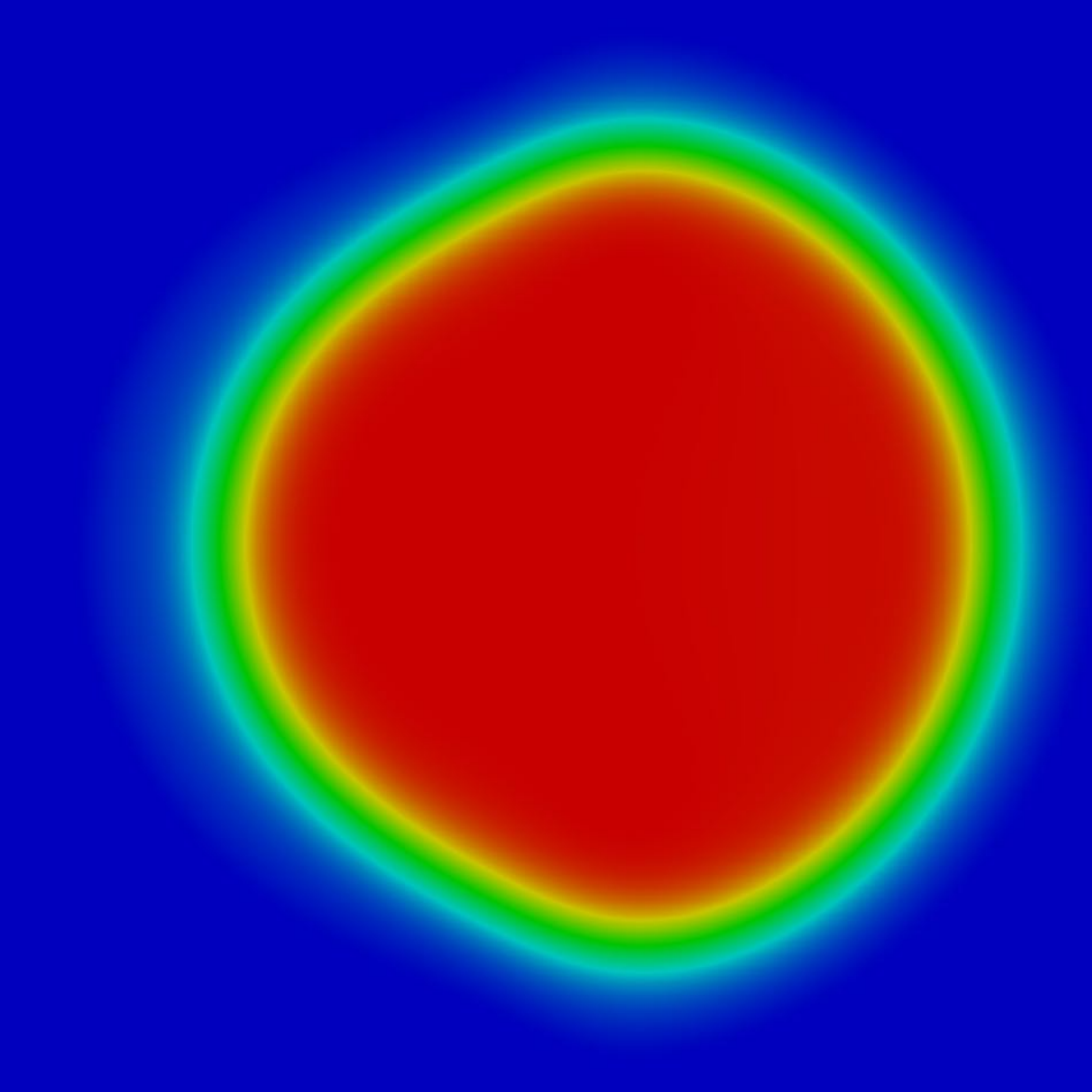}
	\includegraphics[width=.2\textwidth]{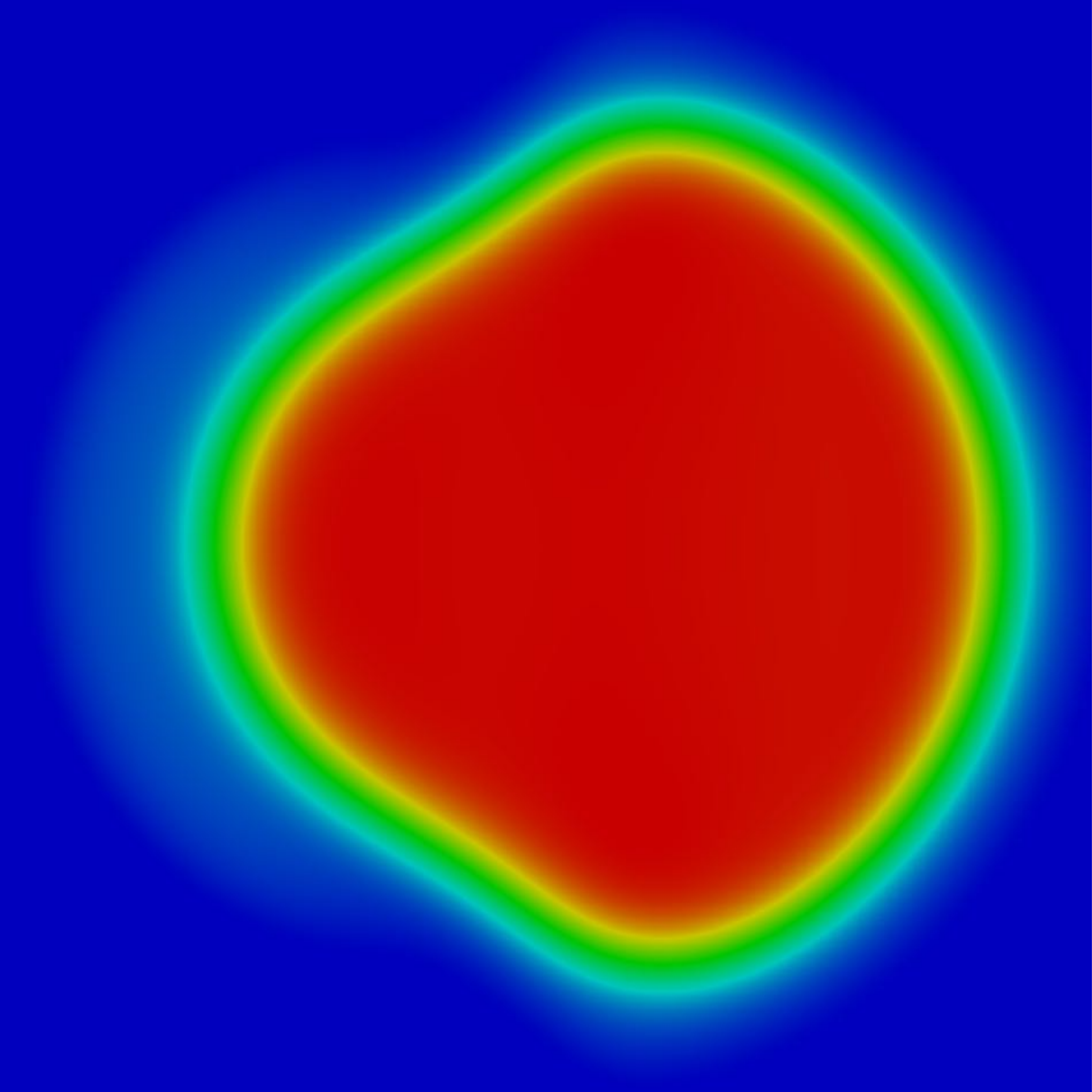} \qquad
	\begin{tikzpicture}
\draw (0, 0) node[inner sep=0] {\includegraphics[width=.2\textwidth]{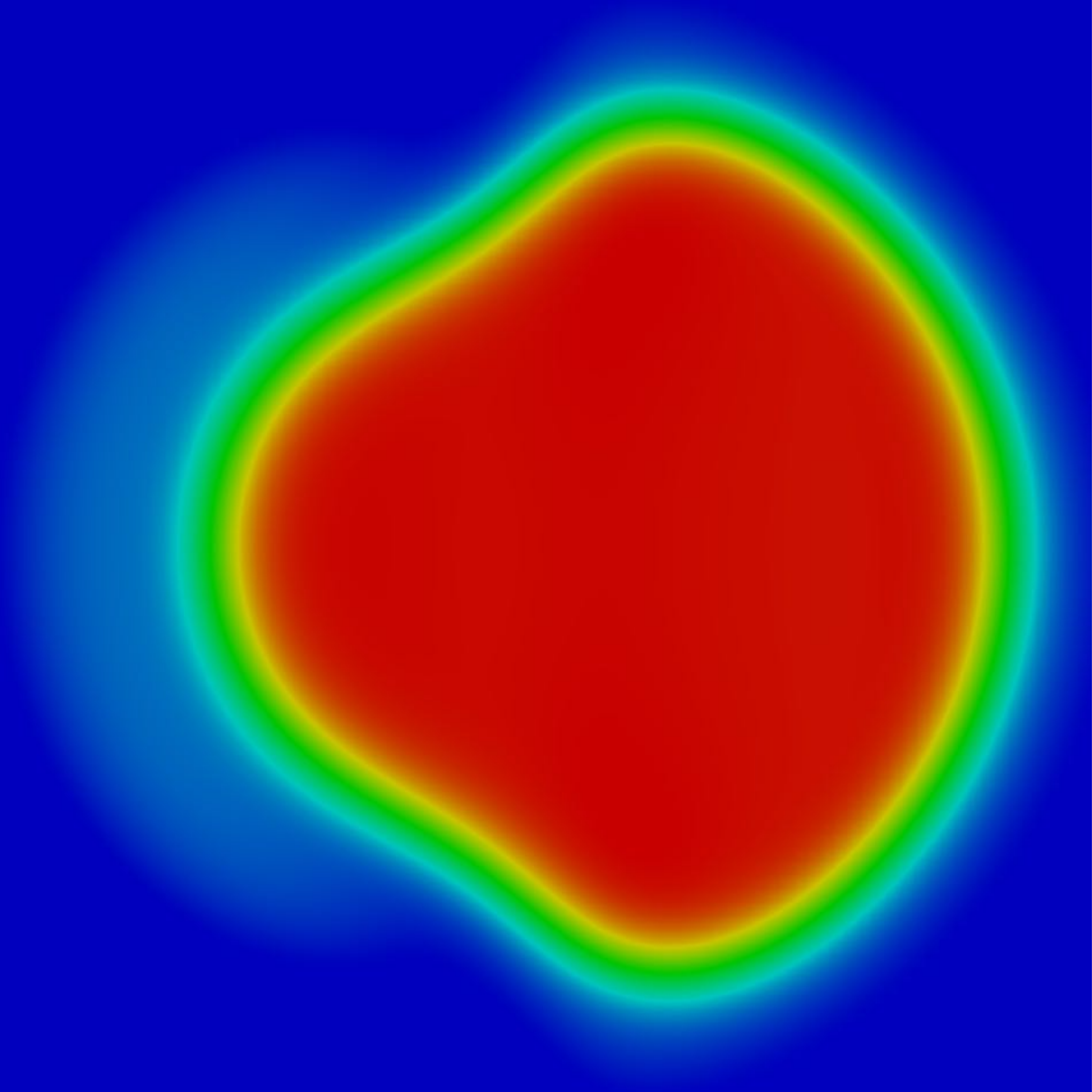}};
\draw (-.9, 1.3) node {$\textbf{\textcolor{white}{0.0025}}$};
\end{tikzpicture} \\[0.1cm]
\begin{tikzpicture}
\draw (0, 0) node[inner sep=0] {\includegraphics[width=.2\textwidth]{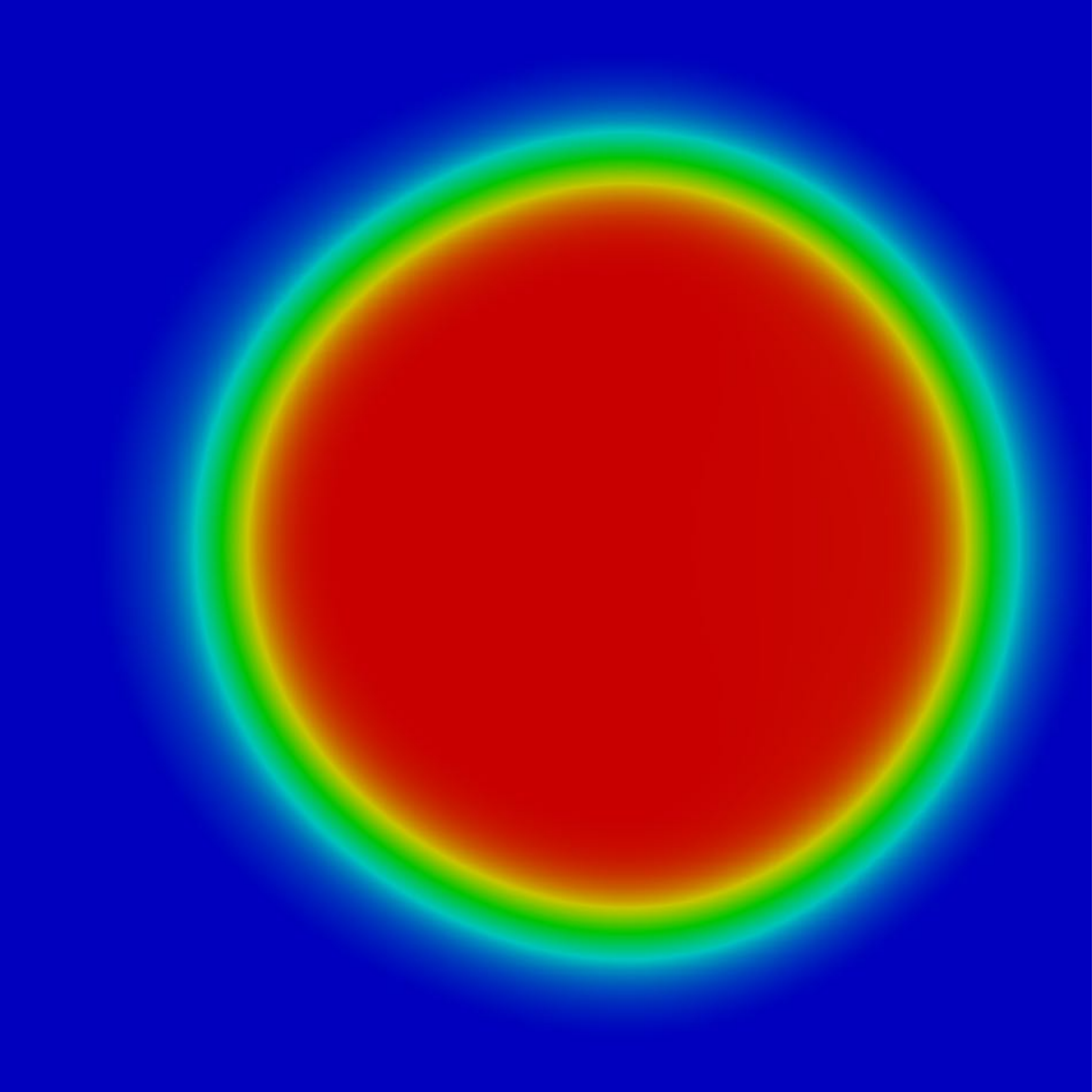}};
\draw (-1.8,0)  node {$\varepsilon_3$};
\end{tikzpicture}
	\includegraphics[width=.2\textwidth]{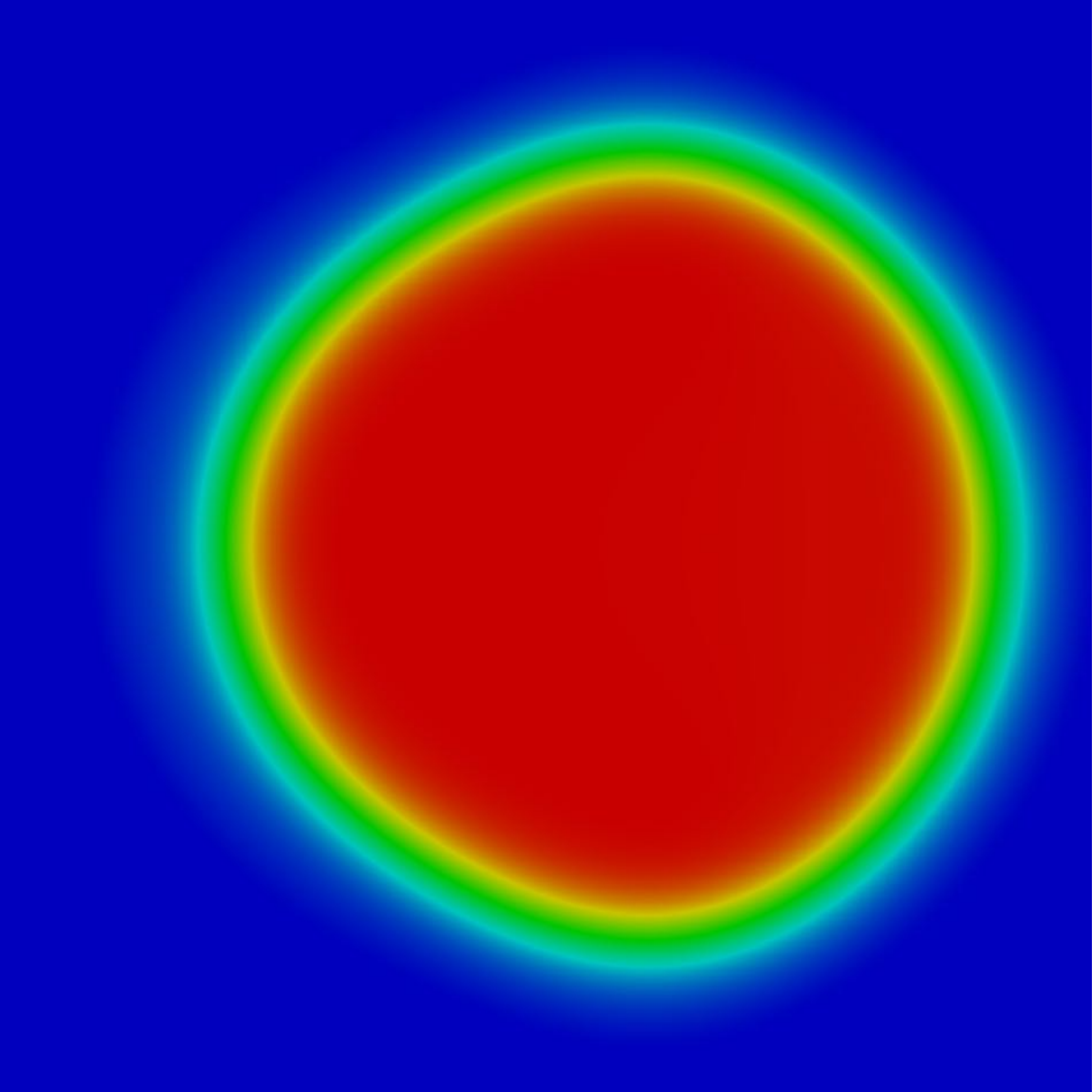} 
	\includegraphics[width=.2\textwidth]{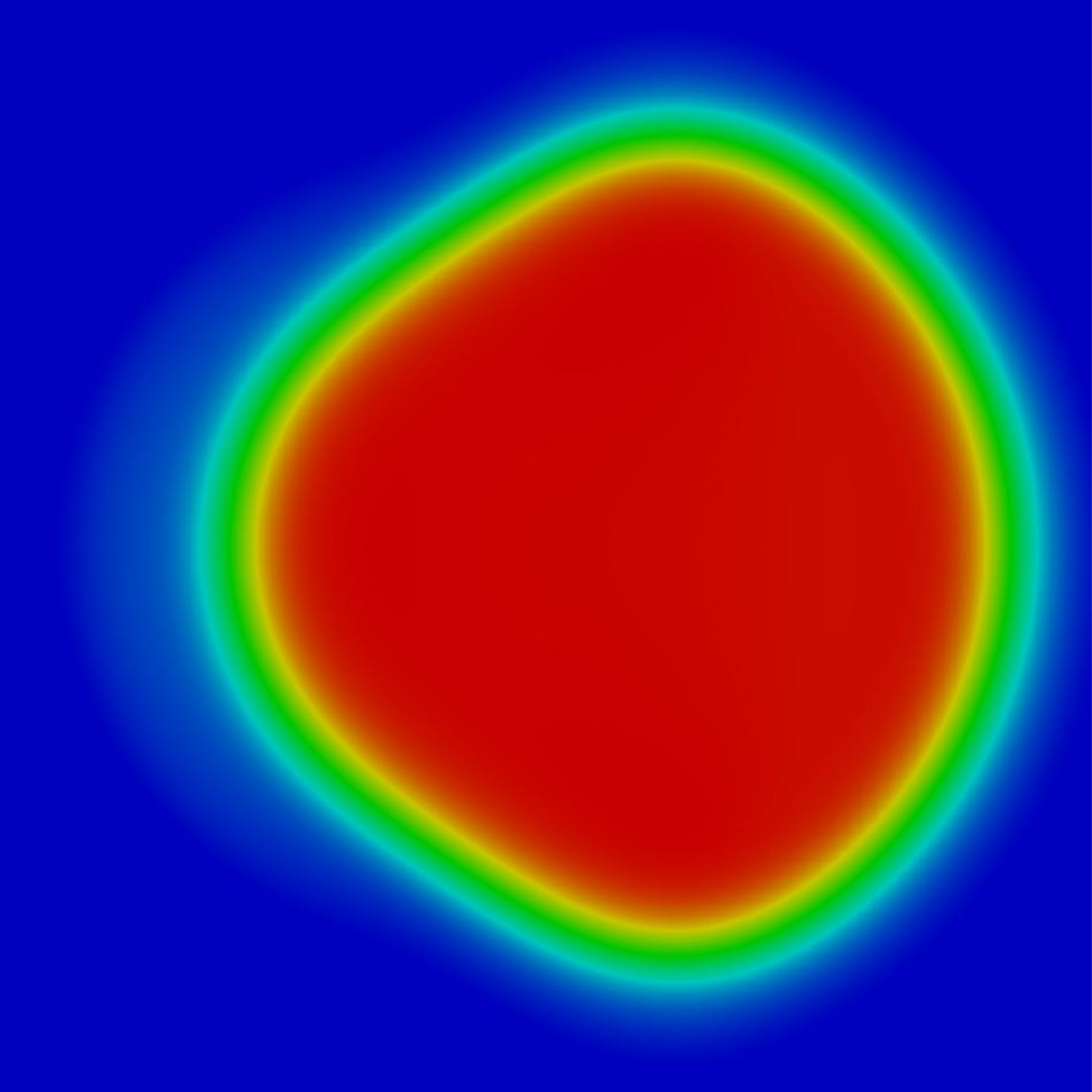} \qquad
\begin{tikzpicture}
\draw (0, 0) node[inner sep=0] {\includegraphics[width=.2\textwidth]{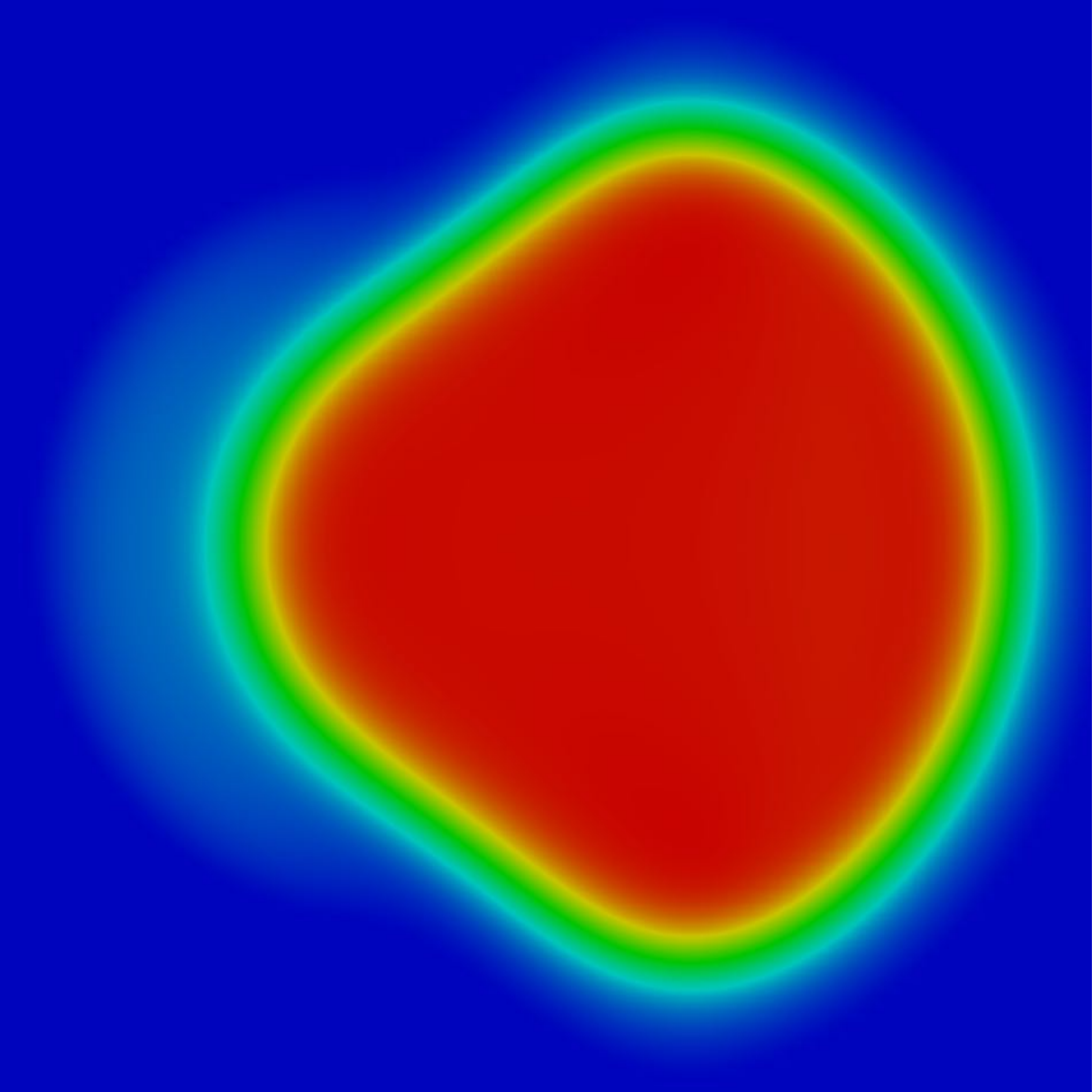}};
\draw (-.9, 1.3) node {$\textbf{\textcolor{white}{0.0030}}$};
\end{tikzpicture} \\
% \begin{tikzpicture}
% \draw (0, 0) node[inner sep=0] {\includegraphics[width=.24\textwidth]{FiguresNonlocal2D/eps0chi1.pdf}};
% \draw (.1, 2.1) node {$\chi_{H}=10^{-4}$};
% \draw (-3.1,0)  node {$\phantom{.00\cdot 10^{-2}}\eps=0$};
% \end{tikzpicture}
% \begin{tikzpicture}
% \draw (0, 0) node[inner sep=0] {\includegraphics[width=.24\textwidth]{FiguresNonlocal2D/eps0chi5.pdf}};
% \draw (.1, 2.1) node {$\chi_{H}=5\cdot 10^{-4}$};
% \end{tikzpicture}
% \begin{tikzpicture}
% \draw (0, 0) node[inner sep=0] {\includegraphics[width=.24\textwidth]{FiguresNonlocal2D/eps0chi10.pdf}};
% \draw (.1, 2.1) node {$\chi_{H}=10^{-3}$};
% \end{tikzpicture} \\[0.1cm]
% \begin{tikzpicture}
% \draw (0, 0) node[inner sep=0] {\includegraphics[width=.24\textwidth]{FiguresNonlocal2D/eps5chi1.pdf}};
% \draw (-3.1,0)  node {$\eps=2.75 \cdot 10^{-2}$};
% \end{tikzpicture}
% 	\includegraphics[width=.24\textwidth]{FiguresNonlocal2D/eps5chi5.pdf}
% 	\includegraphics[width=.24\textwidth]{FiguresNonlocal2D/eps5chi10.pdf} \\[0.1cm]
% \begin{tikzpicture}
% \draw (0, 0) node[inner sep=0] {\includegraphics[width=.24\textwidth]{FiguresNonlocal2D/eps10chi1.pdf}};
% \draw (-3.1,0)  node {$\eps=5.25\cdot 10^{-2}$};
% \end{tikzpicture}
% 	\includegraphics[width=.24\textwidth]{FiguresNonlocal2D/eps10chi5.pdf}
% 	\includegraphics[width=.24\textwidth]{FiguresNonlocal2D/eps10chi10.pdf} \\[-0.2cm]
	%\hspace{1.5cm}
	\begin{tikzpicture}
	\begin{axis}[
	hide axis,
	scale only axis,
	height=0pt,
	width=0pt,
	colormap name=rainbow,
	colorbar horizontal,
	point meta min=0,
	point meta max=1,
	colorbar style={
		samples=100,
		height=.5cm,
		xtick={0,0.5,1},
		width=10cm
	},
	]
	\addplot [draw=none] coordinates {(0,0)};
	\end{axis}
	\end{tikzpicture} %\vspace{-0.2cm}
 	\caption{Simulation of the tumor volume fraction $\phi_T$ for three different haptotaxis parameters $\chi_H \in \{5\cdot 10^{-4},10^{-3},2\cdot 10^{-3}\}$ and different kernel functions $k_\eps$ for $\eps \in \{\eps_1,\eps_2,\eps_3\}:= \{0,2.75\cdot 10^{-2},5.25\cdot 10^{-2}\}$ for a fixed time $t=12$; also three different parameters $\chi_H$ are selected such that the shapes are in accordance with each other}
	\label{FigureLocalNonlocal}
\end{figure}

\section{Concluding Comments} \label{Sec_Conclusion}

 In this study, we have  presented and  analyzed new local and nonlocal mathematical models of growth and of invasion of tumors in healthy tissue  that depict the erosion of the extracellular matrix by matrix-degenerative enzymes and the affects of long-range interactions such as cell-to-cell adhesion. Under reasonable assumptions on the forms of the total energy of the system, potentials, and cell mobility behavior, we proved the existence of solutions to systems of phase-field  models characterized by nonlinear integro-partial differential equations derived  using the  balance laws of mechanics and principal  biological mechanisms know to control the  growth and decline of tumor masses. The results of several numerical experiments based  on two-and three-dimensional finite element approximations of the models are presented which demonstrate that the models provide realistic simulations of the effects of nonlocal interactions  and MDE concentrations  on erosion of the  ECM and corresponding invasion of tumor cells  for various distributions of nutrient  concentration.   
	\pagebreak
	
\section*{Appendix: An existence result for integro-differential equations}

We study the initial value problem for integro-differential systems in the form 
\begin{equation} \label{app:integro}
    \begin{aligned}
    \begin{cases}
        x^{\prime}(t)=f(t, x(t), Kx(t)), \\
        x(0)=x_0,
        \end{cases}
    \end{aligned}
\end{equation}
where $f \in C([0, T]\times \mathbb{R}^n \times \mathbb{R}^n; \mathbb{R}^n)$, $Kx(t)=\int_0^t k(s,x(s)) \, \dd s$, and prove a local existence theorem. To this end, we employ the Schauder fixed-point theorem; see, e.g.,~\cite[Chapter 9.2, Theorem 3]{evans2010partial}. The proof below can be considered as an extension of the Cauchy--Peano theorem and ~\cite[Theorem 1.1.1]{lakshmikantham1995theory}, where a similar integro-differential equation is considered. We note that since $f$ is continuous on $[0,T]$ with respect to $t$, \eqref{app:integro} can be equivalently rewritten as
\begin{equation*}
    \begin{aligned}
        x(t)= x_0+\int_0^t f(s, x(s), K x(s)) \, \textup{d}s,
    \end{aligned}
\end{equation*}
for $t \in [0,T]$.

\begin{theorem}[Local existence of solutions of \eqref{app:integro}]
Let $f \in C([0, T]\times \mathbb{R}^n \times \mathbb{R}^n; \mathbb{R}^n)$ and $k \in C([0,T] \times \mathbb{R}^n; \mathbb{R}^n)$. Then the initial value problem (\ref{app:integro}) has a solution $x$ on the interval $[0,\widetilde T]$ for some $\widetilde T \in (0,T]$. \label{app:theorem}
\end{theorem}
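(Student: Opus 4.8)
The plan is to recast \eqref{app:integro} in its equivalent integral form and to solve it by a Schauder fixed-point argument on a ball of $C([0,\widetilde T];\R^n)$. Concretely, I would study the operator
\[ (\mathcal{F}x)(t) := x_0 + \int_0^t f\big(s, x(s), Kx(s)\big)\, \dd s, \qquad t \in [0,\widetilde T]. \]
First, fix a radius $b>0$ and put $K_1 := \overline{B}(x_0,b) \subset \R^n$. Since $k$ is continuous on the compact set $[0,T]\times K_1$, there is a constant $L<\infty$ with $|k(s,y)|\le L$ there; hence for any $\tau \in (0,T]$ and any continuous $x \colon [0,\tau]\to K_1$ one has $|Kx(t)| \le L\tau \le LT$, so $Kx$ takes values in the compact ball $K_2 := \overline{B}(0,LT)$. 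Continuity of $f$ on $[0,T]\times K_1\times K_2$ then furnishes a bound $|f(t,y,z)| \le M<\infty$ there. I next set $\widetilde T := \min\{T,\, b/M\}$ and
\[ \mathcal{B} := \{\, x \in C([0,\widetilde T];\R^n) : \|x - x_0\|_{C([0,\widetilde T];\R^n)} \le b \,\}. \]
For $x\in\mathcal{B}$ one has $x(s)\in K_1$ and $Kx(s)\in K_2$ for all $s$, so $|(\mathcal{F}x)(t)-x_0| \le \int_0^t |f(s,x(s),Kx(s))|\, \dd s \le M\widetilde T \le b$; thus $\mathcal{F}$ maps the nonempty, bounded, closed and convex set $\mathcal{B}$ into itself.

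Next I would check the remaining hypotheses of Schauder's theorem. Continuity of $\mathcal{F}$ on $\mathcal{B}$: if $x_j\to x$ uniformly with $x_j,x\in\mathcal{B}$, then uniform continuity of $k$ on $[0,T]\times K_1$ yields $Kx_j\to Kx$ uniformly, and then uniform continuity of $f$ on $[0,T]\times K_1\times K_2$ gives $f(\cdot,x_j(\cdot),Kx_j(\cdot))\to f(\cdot,x(\cdot),Kx(\cdot))$ uniformly, hence $\mathcal{F}x_j\to\mathcal{F}x$. Compactness of $\mathcal{F}(\mathcal{B})$: its elements lie in $\mathcal{B}$ (uniform boundedness) and satisfy $|(\mathcal{F}x)(t)-(\mathcal{F}x)(t')| \le M|t-t'|$ uniformly in $x\in\mathcal{B}$ (equicontinuity), so by the Arzelà--Ascoli theorem $\overline{\mathcal{F}(\mathcal{B})}$ is compact in $C([0,\widetilde T];\R^n)$. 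Schauder's fixed-point theorem then yields $x\in\mathcal{B}$ with $\mathcal{F}x=x$. Since $s\mapsto f(s,x(s),Kx(s))$ is continuous, the identity $x=\mathcal{F}x$ shows $x\in C^1([0,\widetilde T];\R^n)$ with $x'(t)=f(t,x(t),Kx(t))$ and $x(0)=x_0$, which is the claimed solution.

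The only place where the integro-differential structure genuinely enters — and the step I would treat most carefully — is the a priori confinement of the nonlocal term: the second and third slots of $f$ must be forced into the fixed compact sets $K_1$, $K_2$ \emph{before} $\widetilde T$ is chosen, so that the bound $M$, and therefore the definition $\widetilde T := \min\{T,\,b/M\}$, are not circular. Once that ordering is respected, the argument is exactly the Cauchy--Peano proof carried out on the fixed interval $[0,\widetilde T]$, with the only additional ingredients being the elementary estimates $|Kx_j(t)-Kx(t)| \le \int_0^{\widetilde T}|k(s,x_j(s))-k(s,x(s))|\,\dd s$ used for continuity and $|(\mathcal{F}x)(t)-(\mathcal{F}x)(t')| \le M|t-t'|$ used for equicontinuity.
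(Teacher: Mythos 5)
Your proposal is correct and follows essentially the same route as the paper: rewrite \eqref{app:integro} as a fixed-point equation for the Volterra operator, bound $k$ on $[0,T]\times\overline{B}_b(x_0)$ to confine $Kx$ to a fixed compact ball, bound $f$ there to choose $\widetilde T=\min\{T,b/M\}$, and apply Schauder with continuity and Arzel\`a--Ascoli compactness. If anything, your version is slightly more careful on one point: the paper phrases the Arzel\`a--Ascoli step as "compactness of $Y$," but the closed ball $Y$ in $C([0,\widetilde T];\mathbb{R}^n)$ is not compact; what the estimate $|\mathcal{T}\phi(t_2)-\mathcal{T}\phi(t_1)|\le C_f|t_2-t_1|$ actually shows is relative compactness of $\mathcal{T}(Y)$, which is exactly the form you invoke, so Schauder for a compact self-map of a closed bounded convex set applies.
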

\begin{proof}
The proof follows the general outline of~\cite[Theorem 1.1.1]{lakshmikantham1995theory}. Let $b>0$. The continuous function $k$ is bounded on the compact set $D=[0,T]\times \overline{B}_b(x_0)$,  $$|k(t,x)| \leq C_k, \quad (t,x)\in D.$$ Here, $\overline{B}_b(x_0)$ denotes the closed ball around $x_0$ with radius $b$ in the Euclidean norm. We then have the estimate
$$|K \phi (s)| \leq \int_0^\sigma |k(\sigma,\phi(\sigma))| \, \dd \sigma \leq T C_k =:r.$$
Therefore, $f$ is a continuous function on the compact set $\widetilde D=[0, T] \times \overline B_b(x_0) \times \overline B_r(0).$ Then there exists $0 < C_f < \infty$ such that $$|f(y)| \leq C_f \ \text{for all } y \in \widetilde D.$$ At this point, we introduce the space
$$Y=\{\phi \in C([0, \widetilde T];\mathbb{R}^n) : \phi(0)=x_0 \text{ and } \|\phi-x_0\|_\infty \leq b \}$$
for $\widetilde T = \min\{T,b/C_f\}$, where $\|x\|_\infty=\max_{t \in [0,\widetilde T]} |x(t)|$ for $x \in C([0,\widetilde{T}];\mathbb{R}^n)$. This particular choice of $\widetilde{T}$ will be justified below. \\

Let $\phi \in Y$. We consider the mapping $\mathcal{T}: \phi \mapsto w$ such that
\begin{equation}
    \begin{aligned}
        w(t):= \mathcal{T}\phi(t)=x_0+\int_0^t f(s, \phi(s), K \phi(s)) \, \textup{d}s,
    \end{aligned}
\end{equation}
where $t \in [0,\widetilde T]$. We intend to prove that $\mathcal{T}$ is a continuous self-mapping on the compact and convex set $Y$, which yields the existence of a fixed point $\phi\in Y$ of $\mathcal{T}$ on account of the Schauder fixed-point theorem.
% Is $w^{\prime}=f(t, x(t))+g(\int_0^t K(s, x(s))\, \textup{d}s)$? I think so: If $u \in L^1$, then $v(t)=\int_0^t u(s) ds$ is absolutely continuous and it holds $v'(t)=u(t)$ a.e. (this holds true for every $t$ if $u \in C$) So, if we want this to hold for every $t$, we have to check the continuity under the integral, which is true since $f \in C$ (we need this anyway for Peano) 

Firstly, we prove that the convexity of $Y$ holds. For arbitrary $\phi,\psi \in Y$, and $\lambda \in [0,1]$, we have $\lambda \phi + (1-\lambda) \psi \in C([0,\widetilde T];\mathbb{R}^n)$ and $\lambda \phi(0) + (1-\lambda) \psi(0)=x_0$. Furthermore, it holds that
$$\|\lambda \phi + (1-\lambda) \psi - x_0\|_\infty = \|\lambda (\phi-x_0) + (1-\lambda) (\psi-x_0) \|_\infty \leq \lambda b + (1-\lambda) b = b.$$

Secondly, we show the compactness of $Y$ by employing the theorem of Arzel\`a--Ascoli; see e.g., \cite[Theorem 4.25]{brezis2010functional}. For all $t_1,t_2 \in [0,\widetilde T]$, we have the uniform equicontinuity of $\mathcal{T}$,
$$\begin{aligned}|\mathcal{T}\phi(t_2)-\mathcal{T}\phi(t_1)| &\leq \int_{t_1}^{t_2}  |f(s,\phi(s), K\phi(s))|  \dd s
\leq C_f |t_2-t_1|.
\end{aligned}$$

Thirdly, we prove that $\mathcal{T}$ is a self-mapping, i.e. $T\phi \in Y$ for $\phi \in Y$. We have $T\phi(0)=w(0)=x_0$ by definition of $\mathcal{T}$. Thanks to our choice of $\widetilde T$, we can conclude  that
$$\begin{aligned} |\mathcal{T}\phi(t)-x_0| &\leq \int_0^t  |f(s,\phi(s), K\phi(s))| \dd s 
\leq \widetilde T C_f \leq b,
\end{aligned}$$
for all $t \in [0,\widetilde T]$.

Finally, we show the continuity of $\mathcal{T}$. Let $\eps>0$ and $\phi, \psi \in Y$ be arbitrary. Since $f$ is uniformly continuous on the compact set $\widetilde D$, there exists a $\delta>0$ with $$|\phi(s)-\psi(s)| + |K\phi(s)-K\psi(s)| <\delta,$$ such that
\begin{equation}
|f(s,\phi(s),K\phi(s))-f(s,\psi(s),K\psi(s))| < \frac{\eps}{\widetilde T},
\label{app:proof:uniform}
\end{equation}
holds true for all $s \in [0,\widetilde T]$. Moreover, $k$ is uniformly continuous on $D$ and hence, there is a $\widetilde \delta>0$ with $|\phi(s)-\psi(s)|<\widetilde \delta$ such that
$$|K\phi(s)-K\psi(s)| < \frac{\delta}{2},$$
which remains true for $|\phi(s)-\psi(s)|<\min\{\widetilde \delta, \delta/2\}=:\widehat \delta$. Hence, we have derived the existence of a parameter $\widehat \delta>0$ with $|\phi(s)-\psi(s)|<\widehat \delta$ such that (\ref{app:proof:uniform}) is fulfilled. Therefore, we conclude 
$$\begin{aligned}
    \|\mathcal{T}\phi-\mathcal{T}\psi\|_\infty \leq   \eps,
\end{aligned}$$
which completes the proof.
\end{proof}

\section*{Acknowledgements}

	The authors gratefully acknowledge the support of the German Science Foundation (DFG) for funding part of this work through grant WO 671/11-1, the Cancer Prevention Research Institute of Texas (CPRIT) under grant number RR160005, the NIH through the grants NCI U01CA174706 and NCI R01CA186193, and the US Department of Energy, Office of Science, Office of Advanced Scientific Computing Research, Mathematical Multifaceted Integrated Capability Centers (MMICCS) program, under award number  DE-SC0019393.
	
\bibliography{references}{}
\bibliographystyle{siam} 

\end{document}